%% file: main.tex
\documentclass[11pt]{article}
\usepackage[T1]{fontenc}
\usepackage[utf8]{inputenc}
\usepackage{lmodern}
\usepackage[margin=1in]{geometry}
\usepackage{amsmath,amssymb,amsthm,mathtools,mathrsfs}
\usepackage{booktabs,array,longtable}
\usepackage{enumitem}
\usepackage{microtype}
\usepackage[numbers,sort&compress]{natbib}
\usepackage[dvipsnames]{xcolor}
\usepackage{tikz}
\usepackage{xurl}
\usepackage[colorlinks=true,linkcolor=MidnightBlue,citecolor=MidnightBlue,urlcolor=MidnightBlue]{hyperref}
\usepackage[nameinlink,noabbrev]{cleveref}
\hypersetup{pdftitle={Counting the Fixed Points of Binary Belief Propagation: Generic Degree, Syzygies, and Real Solutions},pdfauthor={Yusuke Watanabe}}
\setlist[enumerate]{itemsep=3pt,topsep=4pt}
\setlist[itemize]{itemsep=3pt,topsep=4pt}
\numberwithin{equation}{section}
\newtheorem{theorem}{Theorem}[section]
\newtheorem{proposition}[theorem]{Proposition}
\newtheorem{lemma}[theorem]{Lemma}
\newtheorem{corollary}[theorem]{Corollary}
\theoremstyle{definition}
\newtheorem{definition}[theorem]{Definition}
\newtheorem{example}[theorem]{Example}
\theoremstyle{remark}
\newtheorem{remark}[theorem]{Remark}
\input{sections/macros}
\title{Counting the Fixed Points of Binary Belief Propagation:\\
 Generic Degree, Syzygies, and Real Solutions}
\author{Yusuke Watanabe}
\date{27 October 2026}
\begin{document}
\maketitle
\begin{abstract}
\input{sections/abstract}
\end{abstract}
\medskip
\noindent\textbf{Keywords.} \input{sections/keywords}

\noindent\textbf{MSC2020.} \input{sections/msc}
\input{sections/introduction}
\input{sections/bp-degree}
\input{sections/incidence-algebra}
\input{sections/strong-field}
\input{sections/real-sectors}
\input{sections/discussion}
\input{sections/declarations}

\bibliographystyle{plainnat}
\bibliography{paper-references}
\clearpage
\appendix
\renewcommand{\thesection}{SM\arabic{section}}
\section*{Supplementary Materials}
\addcontentsline{toc}{section}{Supplementary Materials}
\input{supplement/supplement}
\end{document}

%% file: sections/macros.tex
\newcommand{\C}{\mathbb C}
\newcommand{\R}{\mathbb R}
\newcommand{\F}{\mathbb F}
\newcommand{\Pp}{\mathbb P}
\newcommand{\A}{\mathbb A}
\newcommand{\U}{\mathscr U}
\newcommand{\cO}{\mathcal O}
\newcommand{\cK}{\mathcal K}

\DeclareMathOperator{\Tor}{Tor}
\DeclareMathOperator{\Hilb}{Hilb}

\DeclareMathOperator{\sgn}{sgn}

\DeclareMathOperator{\depth}{depth}

\DeclareMathOperator{\Sym}{Sym}
\DeclareMathOperator{\Frac}{Frac}

\DeclareMathOperator{\diag}{diag}

\DeclareMathOperator{\wt}{wt}
\newcommand{\romega}{\widetilde{\omega}}

%% file: sections/abstract.tex
How many fixed points does belief propagation have on a binary pairwise
model? Over complex parameters, the admissible equations of a finite
loopless multigraph form an algebraic family whose generic degree we
determine: it is a graph state sum over edge sets without vertices of
degree one, each vertex of positive degree contributing one less than
its degree. We give
two geometric realizations of this state sum. The first is the
total Betti number of a universal edge-incidence ring constructed
without parameters. Its full graded Betti table is the state sum,
its Tor algebra is natural for noncontracting graph maps, the
alternating numerator recovers the graph polynomial of Watanabe and
Fukumizu, and an Artin reduction by a regular sequence realizes the one-variable
specialization, corrected for tree components. The second lies in the
actual family: at the strong-field boundary each state factor is a
ramification index, and at sufficiently strong field the state charts
exhaust the admissible complex fiber. For fixed nonzero real couplings, every
Eulerian message-sign sector contains a real fixed point at every
external field, and strong fields attain this lower bound: one
admissible real fixed point per element of the binary cycle space and
one positive fixed point. For graphs of maximum degree at most three,
this minimum equals the number of minimal module generators of the
edge-incidence ring. One state decomposition connects the complex
degree, graded syzygies, and an attained minimum real count. The degree,
selected ring and resolution statements, and the real counts are
formalized in Lean 4.

%% file: sections/keywords.tex
belief propagation, Bethe approximation, generic degree, graph polynomial, Koszul complex, minimal free resolution, Cohen--Macaulay ring, real solutions, binary cycle space, Ising model

%% file: sections/msc.tex
13D02, 05C31, 13C14, 14P05, 62H22, 68T37, 82B20

%% file: sections/introduction.tex
\section{Introduction}
\label{sec:introduction}

How many fixed points can belief propagation (BP) have? For a binary
pairwise model on a finite loopless multigraph, BP associates rational
fixed-point equations with the graph and its local interaction
parameters. Positive solutions describe normalized local beliefs and
stationary points of the Bethe free energy. Over complex parameters the
same equations define an algebraic family, and our first theorem
determines its generic degree: for generic parameters, the number
\(N(G)\) of admissible complex fixed points of the specified
factorization equals a graph state sum \(Q(G)\) over edge sets without
degree-one vertices.

The state sum then receives two geometric realizations. The first is
the total Betti number of a commutative ring constructed from edge
incidences alone, with no BP parameters: every state contribution
appears as a graded Betti number of its minimal free resolution, and
the ring carries products and graph maps that a mere count does not:
classes of vertex-disjoint cycles multiply, while cycles sharing a
vertex have product zero. The second lies in the actual BP family: at the strong-field
boundary each state factor is a ramification index, and at sufficiently
strong field the state charts exhaust the admissible complex fiber. Our second theorem
uses this boundary model, together with a global description of the
real sign sectors, to determine the minimum number of real fixed points
for every fixed nonzero real coupling vector. The common state
decomposition also gives a direct bridge: on graphs of maximum degree
at most three, the minimum real count equals the number of minimal
module generators of the ring.

Why count fixed points? A fixed point is what the BP iteration returns
when it converges, and on a graph with cycles it need not be unique:
distinct positive fixed points are distinct interior stationary points
of the Bethe free energy \cite{yedidia2005bethe}, and the one reached
depends on the initialization. Uniqueness statements must therefore
control all fixed points at once, as in the signed-graph classification
of \cite{watanabe2011signed}, and enumeration by homotopy continuation
is organized around a bound on the number of complex solutions
\cite{knoll2018bp}. The generic degree \(N(G)\) is the invariant behind
both tasks: it bounds the numbers of real and positive fixed points for
generic parameters, and the real-sector analysis identifies the part of
this count forced to be real at every external field, one fixed point
per element of the binary cycle space. The complete graph \(K_4\), with
generic complex degree \(48\) and minimum real count \(8\), is the
worked example throughout.

\subsection{Two main results}

Let \(G=(V,E)\) be a finite loopless multigraph, with distinct parallel
edges retained as distinct interaction factors. Write \(n=|V|\),
\(m=|E|\), \(c(G)\) for the number of connected components, and
\(\nu(G)=m-n+c(G)\). Let \(\tau(G)\) count the tree components,
including isolated vertices. For an edge subset \(S\), let \(d_S(v)\) be
its degree at \(v\), and let \(v(S)\) count its incident vertices.
Set
\begin{equation}\label{intro:state-sum}
 \varphi(0)=1,\qquad \varphi(d)=d-1\quad(d\geq1),\qquad
 \wt(S)=\prod_{v\in V}\varphi(d_S(v)),\qquad
 Q(G)=\sum_{S\subseteq E}\wt(S).
\end{equation}
Only the empty state and edge sets without degree-one vertices
contribute. Let \(N(G)\) denote the generic degree of the BP equations on
the admissible open set of \cref{eq:admissibility}; in particular, zeros
introduced by clearing denominators are excluded.

To state the algebraic result, put \(X_G=(\Pp^1)^V\). For each edge
\(e=\{u,v\}\), take the line bundle of bidegree \((1,1)\) on its
endpoint factors, and let \(F_G\) be their direct sum, with the orientation
lines specified in \cref{alg:edge-bundles}. Let
\(S_G=\Sym H^0(X_G,F_G)^*\), a polynomial ring in \(4m\) variables.
The universal common-zero scheme \(Z_G\subset X_G\times H^0(X_G,F_G)\)
defines the \emph{universal edge-incidence ring}
\(\U_G=\Gamma(Z_G,\cO_{Z_G})\). Write
\[
 E_1^{-q,j}(G)=H^j(X_G,\bigwedge^q F_G^\vee).
\]
Finally, \(\theta_G(t,\gamma)\) and \(\omega_G(t)\) denote the graph
polynomials with the normalization in \cref{alg:omega-definition}.

\begin{theorem}[The edge-incidence ring and the BP degree]
\label{thm:structure}
For every finite loopless multigraph, \(N(G)=Q(G)\), and \(\U_G\) is a
canonical normal Cohen--Macaulay domain, finite over \(S_G\), of
dimension \(d_G=3m+n-\tau(G)\).
Its minimal graded free resolution has Betti numbers
\begin{equation}\label{intro:betti}
 \beta_{p,q}^{S_G}(\U_G)
 =\sum_{\substack{S\subseteq E\\ |S|=q,\ |S|-v(S)=p}}\wt(S),
 \qquad
 N(G)=\sum_{p,q}\beta_{p,q}^{S_G}(\U_G).
\end{equation}
There is a canonical isomorphism of bigraded algebras
\begin{equation}\label{intro:tor}
 \Tor_p^{S_G}(\U_G,\C)_q\cong E_1^{-q,q-p}(G).
\end{equation}
The construction and this identification are natural for endpoint-preserving
graph maps that do not contract edges. Moreover,
\begin{equation}\label{intro:hilbert}
 \Hilb_{\U_G}(t)
 =\frac{\theta_G(t,2\mathrm i)}{(1-t)^{4m}}
 =\frac{\omega_G(t)}{(1-t)^{3m+n}}.
\end{equation}
A quotient \(A_G\) by \(d_G\) general coefficient linear forms, which
form a regular sequence, is Artinian with
\[
 \Hilb_{A_G}(t)=\romega_G(t):=\frac{\omega_G(t)}{(1-t)^{\tau(G)}}
 =\prod_{C}\omega_C(t),
\]
the product running over the components \(C\) that contain a cycle.
In particular \(\romega_G=\omega_G\) when there are no tree components.
\end{theorem}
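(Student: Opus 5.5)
The plan is to realize everything through the Koszul complex of the universal section and a Kempf--Weyman style geometric technique. On \(X_G\times H^0(X_G,F_G)\) the universal section \(s\) of \(F_G\) cuts out \(Z_G\). Each summand \(\cO(1,1)\) on an edge is globally generated with \(h^0=4\), so the total space of \(Z_G\to X_G\) is a vector bundle of corank \(m\) inside the trivial bundle with fiber \(H^0(X_G,F_G)\cong\C^{4m}\). Hence \(Z_G\) is smooth, irreducible, and of dimension \(n+4m-m=3m+n\). I would study \(\U_G=\Gamma(Z_G,\cO_{Z_G})\) via the pushforward of the Koszul resolution \(\bigwedge^\bullet(F_G^\vee)\otimes\cO\) along the projection to \(H^0\). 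This yields a spectral sequence with \(E_1^{-q,j}=H^j(X_G,\bigwedge^q F_G^\vee)\otimes S_G(-q)\) converging to \(\U_G\).

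First I would compute these terms. The bundle \(\bigwedge^q F_G^\vee\) splits as a sum over \(q\)-subsets \(S\) of \(\bigotimes_v\cO(-d_S(v))\). By Künneth, each such term has cohomology only in degree \(v(S)\), where \(S\) touches \(v(S)\) factors, and its dimension is \(\prod_v h^1(\cO(-d_S(v)))=\prod_v\varphi(d_S(v))\). If some incident vertex has \(d_S(v)=1\), the product vanishes. So \(E_1^{-q,j}\neq 0\) only for \(j=v(S)\), that is, at total degree \(j-q=-p\) with \(p=q-v(S)\). Since \(v(S)\le 2q\), the nonzero terms lie in total degree \(\le 0\). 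Each \(E_1^{-q,j}\) sits in a single internal degree \(q\), and the differentials are linear in the \(S_G\)-variables with strictly increasing internal degree, so the complex obtained is minimal.

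To obtain the resolution, I would show that the higher direct images \(R^i\) for \(i>0\) vanish, which means the terms of positive total degree are absent. This follows from \(v(S)\le 2|S|\), together with the fact that \(p\ge0\) exactly when \(S\) has no degree-one vertices, where \(|S|\ge v(S)\) holds. Then Weyman's theorem (basic theorem 5.1.2) gives a minimal graded free resolution \(F_p=\bigoplus_q E_1^{-q,q-p}\otimes S_G(-q)\) of the pushforward of \(\cO_{Z_G}\). This yields \eqref{intro:betti} and the Tor isomorphism \eqref{intro:tor}. The algebra structure on Tor comes from the wedge product on \(\bigwedge^\bullet F_G^\vee\) and cup product, which is where naturality for edge-injective maps enters. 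Normality, Cohen--Macaulayness, and being a domain follow from the same theorem, given vanishing of \(R^iq_*\cO_{Z_G}\) and birationality or finiteness of \(Z_G\to\) image. I would obtain the dimension count \(3m+n-\tau(G)\) from generic fiber dimension: on a tree component the map has positive-dimensional fibers, contributing \(-1\) per tree component. I expect this step, vanishing of the positive-total-degree terms and the precise fiber analysis yielding \(\tau\), to be the main obstacle.

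The Hilbert series is the alternating sum \(\sum_S(-1)^{|S|-v(S)}\wt(S)t^{|S|}/(1-t)^{4m}\), which I would match to \(\theta_G(t,2\mathrm i)\) via the definitions in \cref{alg:omega-definition}. Cohen--Macaulayness then lets general linear forms of length \(d_G\) be a regular sequence. The Artinian quotient therefore has \(h\)-polynomial equal to the numerator over \((1-t)^{d_G}\), namely \(\romega_G\), and it is multiplicative over components since \(X_G\), \(F_G\), and \(\U_G\) split as products. Finally, \(N(G)=Q(G)\) follows because the total Betti number \(\sum\wt(S)=Q(G)\) equals the degree of the finite map onto the parameter space, computed as a top Chern or Euler-type count. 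More directly, I would show that the generic fiber of the BP family over admissible parameters identifies with the fiber of \(Z_G\), and then count by the Bernstein or Chern class computation \(\int c_{\mathrm{top}}\). That this equals the total Betti number rather than the alternating one requires a separate degeneration argument, for instance via the strong-field boundary charts.
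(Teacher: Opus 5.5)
\textbf{Main gap: the first assertion $N(G)=Q(G)$.} Your algebraic skeleton is the paper's: the Koszul complex of the universal section, K\"unneth on $(\Pp^1)^V$, and Weyman's basic theorem giving $\mathscr F_p=\bigoplus_q H^{q-p}(X_G,\bigwedge^qF_G^\vee)\otimes S_G(-q)$. The paper reproves that theorem with a perturbation lemma. What your proposal lacks is a valid argument for $N(G)=Q(G)$.

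$N(G)$ is the generic degree of the BP family over $(\C^*)^{m+n}$, while $\U_G$ involves no BP parameters, so nothing computed on the $Z_G$ side yields it. The generic BP fiber is not a fiber of $Z_G$: for $K_4$, $\dim Z_{K_4}=22<24$, so six general edge forms on $(\Pp^1)^4$ have no common zero. Intersection-theoretic counts on that side produce multiplicities such as $\romega_{K_4}(1)=30$, the rank of $\U_{K_4}$ over a Noether normalization, or alternating sums such as $\theta_{K_4}(1,2\mathrm i)=0$. They do not produce the unsigned total $48$.

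Your fallback, the strong-field charts, gives only $N\ge Q$, because simple points persist to nearby generic fibers. The paper proves that those charts exhaust the fiber only by invoking $N=Q$ through the saturation lemma, so using them for the upper bound would be circular.

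What is missing is a degeneration that controls \emph{all} solutions. The paper takes $s_e=a_e^2=1+tc_e$ with $c_e,\zeta_i$ generic and works over $\overline{K((t))}$ in the coordinates $A_{e,i}=X_{i,e}/a_e$, $\zeta_i=b_i/\prod_{e\ni i}a_e$.
\begin{enumerate}
\item It proves that every admissible solution has units $F_i,A_{e,i}$ with $\overline{F_i}\ne-1$. The tools are spin changes to make $v(F_i)\ge0$, a vertex maximizing $v(1+F_i)$ in the edge quadratic, and a valuation count at vertices with $v(F_i)>0$.
\item Hence each solution reduces to one of exactly $Q(G)$ limit points: $(A,B)=(-1,-1)$ on an active set $S$, $(A,B)=(F_i,F_j)$ off it, and $\zeta_i=(-1)^{k_i}F_i^{1-k_i}$ at vertices.
\item Each such limit has exactly one admissible lift, which is simple, by Hensel lifting with an invertible Jacobian.
\end{enumerate}
Only after this independent count does $N(G)=\sum_{p,q}\beta_{p,q}$ follow.

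\textbf{Ring-theoretic steps that need repair.}

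\emph{Vanishing of terms with $j>q$.} This does not follow from $v(S)\le2|S|$. It follows from $2|S|=\sum_v d_S(v)\ge2v(S)$ for states without degree-one vertices; the other states have zero cohomology.

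\emph{Minimality.} The differentials are not linear. In the $K_4$ resolution, $S(-5)^{24}\to S(-3)^4$ must have nonzero quadratic entries, since $\U_{K_4}$ has no free summand. Minimality is simply part of Weyman's theorem.

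\emph{Domain and normality.} Weyman's birational theorem is unavailable here. The projection $Z_G\to\mathbb B_G$ is generically $2:1$ onto its image for a triangle and has positive-dimensional fibers over tree components. Instead, $\U_G=\Gamma(Z_G,\cO_{Z_G})$ is a normal domain directly, because $Z_G$ is smooth and irreducible.

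\emph{Dimension and Cohen--Macaulayness.} You can bypass the fiber analysis you flag as the main obstacle. The resolution has length exactly $\rho=m-n+\tau$, attained by the union of the 2-cores of the cyclic components, so $\depth\U_G\ge 4m-\rho=d_G$. On the other hand $\dim\U_G\le\sum_C(3m_C+n_C)+\sum_T4m_T=d_G$, since $\U_T=S_T$ on tree components. Together these give Cohen--Macaulayness and the dimension.

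\emph{Multiplicativity of the Tor isomorphism.} This is not part of Weyman's theorem, which concerns modules. The paper obtains it by derived base change to the coefficient origin, where the Koszul differential vanishes: $\U_G\otimes^{\mathbf L}_{S_G}\C\simeq\mathrm R\Gamma(X_G,\bigoplus_q\bigwedge^qF_G^\vee[q])$, compatibly with the DG algebra structure on the \v{C}ech models.

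\emph{Naturality.} It is required for all noncontracting maps, which may identify edges, not just edge-injective ones. It comes from the pair $a_\phi:X_H\to X_G$ and the coefficient-copy map $\mathbb B_H\to\mathbb B_G$, which together send $Z_H$ into $Z_G$.
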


Thus one graded table carries three invariants: its unsigned total is
the BP degree, its alternating numerator is \(\theta_G(t,2\mathrm i)\),
and its Hilbert numerator after the regular cut is \(\romega_G(t)\).
The ring \(\U_G\) involves no choice of pencil or section; its Artin
reductions are not canonical and are not identified with BP root
algebras. For a tree, \(N(G)=1\), \(\U_G\cong S_G\), and
\(\romega_G=1\).

The second result concerns the actual BP parameters
\(a_e=e^{2J_e}\), \(b_i=e^{2h_i}\). For real \(J,h\), let
\(R_G(J,h)\) count all distinct admissible real roots, including roots
with negative messages and allowing the value \(\infty\). Let
\(N_+(J,h)\) count roots with all messages positive.

\begin{theorem}[Strong-field ramification and the minimum real count]
\label{thm:real}
Let \(G\) be a finite loopless multigraph.
\begin{enumerate}[label=\textup{(\roman*)}]
\item Fix complex edge parameters \(a_e\ne0\) with \(a_e^2\ne1\).
For each contributing state \(S\), the fixed-\(a\) admissible BP family
has a smooth local analytic extension across \(p=0\), where
\(p_i=b_i^{-1}\). In coordinates \(w=(w_i)\) on this extension,
the parameter map is exactly
\begin{equation}\label{intro:power-map}
 p_i=(-1)^{d_S(i)}w_i^{r_i},\qquad
 r_i=\begin{cases}1,&d_S(i)=0,\\d_S(i)-1,&d_S(i)\geq2.\end{cases}
\end{equation}
Its boundary local algebra is
\(\bigotimes_{i\in V}\C[w_i]/(w_i^{r_i})\), of length \(\wt(S)\).
For all sufficiently small nonzero \(p_i\), these charts exhaust the
complex admissible fiber, giving exactly \(Q(G)\) simple roots.

\item Fix arbitrary finite real couplings \(J_e\ne0\). At every
field \(h\in\R^V\), each Eulerian edge subset is the negative-message
support of at least one admissible real root, so
\(R_G(J,h)\geq2^{\nu(G)}\).
In every fixed field orthant, sufficiently large field magnitudes attain
this bound: the numbers of complex, real, and positive roots are
respectively
\begin{equation}\label{intro:three-counts}
 \bigl(N_{\C},R_G,N_+\bigr)=\bigl(Q(G),2^{\nu(G)},1\bigr),
\end{equation}
and all complex roots are simple. Consequently
\begin{equation}\label{intro:minimum}
 \min_{h\in\R^V}R_G(J,h)=2^{\nu(G)}.
\end{equation}
\end{enumerate}
\end{theorem}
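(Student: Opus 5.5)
The plan is to prove (i) by a Newton-polygon (tropical) analysis of the fixed-\(a\) BP system near \(p=0\), and to close the exhaustion claim by a degree count using \(N(G)=Q(G)\) from \cref{thm:structure}. Part (ii) then follows from a sign-sector fixed-point argument together with a real-root count inside the charts of (i).

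\emph{Step 1: local model at each state.} Write the admissible equations in message-ratio form \(x_{i\to j}=f_{a_e}\bigl(b_i\prod_{k\ne j}x_{k\to i}\bigr)\) with \(f_a(y)=(ay+1)/(y+a)\). As \(p_i=b_i^{-1}\to0\), each message either tends to the generic limit \(f_a(\infty)=a\), or lies on an edge where the incoming product degenerates at rate \(p_i\). The hypotheses \(a_e\neq0\) and \(a_e^2\neq1\) make \(f_a\) a nondegenerate Möbius map, so these two regimes are separated.

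For a state \(S\) I will check the following balance. On the edges of \(S\) the messages sit near the degenerate value. At a vertex \(i\) with \(d_S(i)\ge2\), the \(d_S(i)\) incoming deviations must multiply out to \(p_i\), up to the sign \((-1)^{d_S(i)}\) coming from the messages near \(-\)-type values. Since one outgoing relation is redundant, one free coordinate \(w_i\) remains, with \(p_i=(-1)^{d_S(i)}w_i^{d_S(i)-1}\). Degree-one vertices cannot balance, which explains why such states do not contribute. All remaining message coordinates are solved analytically in \(w\) by the implicit function theorem, using that the Jacobian at the boundary is triangular with entries \(f_a'\neq0\). This yields a smooth chart on which the parameter map is exactly \eqref{intro:power-map}. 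The boundary fiber is then \(\bigotimes_i\C[w_i]/(w_i^{r_i})\), of length \(\prod_i r_i=\wt(S)\).

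\emph{Step 2: exhaustion and simplicity.} For small nonzero \(p\), each chart contributes \(\prod r_i\) distinct preimages, and each is simple because the power map is unramified off \(w=0\). Summed over states this gives \(Q(G)\) distinct simple roots. By \cref{thm:structure} the generic degree is \(N(G)=Q(G)\), so by upper semicontinuity no further admissible roots exist near \(p=0\). The point needing care is that roots cannot escape to the excluded (non-admissible) locus or to infinity. I expect this to be the main obstacle. My first attempt is a valuation argument: any root family with \(p\to0\) has Puiseux expansions, and the tropical balance of Step 1 forces its degenerate-edge set to be one of the states \(S\). Hence every admissible root near the boundary lies in some chart, and the count closes.

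\emph{Step 3: real roots for every field.} For real \(J_e\neq0\), record the sign pattern of messages. The sign of \(x_{i\to j}\) is determined by the sign of the incoming product through \(f_a\). Consistency around each vertex forces the negative-message support to have even degree everywhere, i.e. to be Eulerian. Conversely, for each Eulerian \(T\) I restrict the BP map to the product of real intervals with the prescribed signs, compactified in \(\R\cup\{\infty\}\). Since \(f_a\) maps \((0,\infty)\) into the interval between \(a\) and \(1/a\), and maps negatives outside it, this box maps into itself. Brouwer's theorem then gives an admissible real root in each of the \(2^{\nu(G)}\) sectors, for every \(h\).

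\emph{Step 4: attainment at strong field.} For large \(|h|\) in a fixed orthant, all roots are the chart points of (i). A chart point is real only if each \(w_i\) is a real root of \(w_i^{r_i}=(-1)^{d_S(i)}p_i\). Counting these real points over states, and matching signs, should give exactly one real root per Eulerian sign sector, hence \(R_G=2^{\nu(G)}\). In particular the empty state gives the unique positive root, so \(N_+=1\). Combined with Step 3 this proves \eqref{intro:minimum}.
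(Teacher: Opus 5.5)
\textbf{The gap is Step~3.} It is your only support for $R_G(J,h)\ge2^{\nu(G)}$ at every field, and hence for \eqref{intro:minimum}. The BP map does not send your sign box into itself. For $a>0$, $f_a(X)=(aX+1)/(X+a)$ is negative only when $X$ lies strictly between $-a$ and $-a^{-1}$. A negative cavity value outside that interval therefore gives a positive message; already $f_a(X)\to a^{-1}>0$ as $X\to0^-$. On the selected Eulerian edges the cavity values scale with $b_i=e^{2h_i}$, so no field-independent sign box can be invariant. Compactifying by $\infty$ does not help, because products such as $0\cdot\infty$ leave the map undefined on the closed box.

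Choosing a better box does not fix this. In \cref{ex:triangle-admissible}, each orientation of the triangle evolves by $T(x)=(4x+1)/(2x+2)$. Its negative fixed point $x_-=(1-\sqrt3)/2$ is repelling, with $T'(x_-)=2+\sqrt3$, and the only compact interval of negative numbers mapped into itself by $T^3$ is $\{x_-\}$. So no nondegenerate sign-restricted box is invariant, and Brouwer in message coordinates has nothing to act on. For the same reason, your preliminary claim that a message's sign is determined by the sign of its cavity is false. Placing every real root in an Eulerian sector needs the maximum principle $F_i>0$ of \cref{real:sign-sectors}, although the lower bound itself needs only existence.

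\textbf{The missing idea is to eliminate the messages before looking for a fixed point.} Fix $F_i,F_j>0$ and $a_e\ne1$. Then each edge has exactly one positive and one negative admissible real solution, real analytic in $(F_i,F_j)$, with $y^+_{j\to i,e}y^-_{j\to i,e}=-F_i$ (\cref{real:edge-branches}). Moreover $|\tfrac12\log y^+_{j\to i,e}|<|J_e|$. Writing $F_i=e^{2u_i}$, the sector $S$ becomes exactly the equation $D_Su-\Psi_S(u)=h$. Here $D_S=\diag(1-d_S(i))$ is invertible because every $d_S(i)$ is even, and $|\Psi_{S,i}|\le\bar J_i$. In these vertex coordinates your Brouwer idea does work: $u\mapsto D_S^{-1}(h+\Psi_S(u))$ maps $\R^V$ into a fixed compact box. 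The paper instead uses a proper homotopy to $D_S$, which gives degree $(-1)^{v(S)}$ and also yields the parity statements.

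There are three smaller points.
\begin{itemize}
\item In Step~4, the charts at $p=0$ cover only $h_i\to+\infty$, since $p_i=e^{-2h_i}$. The other orthants need the spin change \eqref{real:spin-gauge}. Reality of chart points also requires choosing the $r_i$th root of the positive unit $C_{S,i}(0)$ to be real, so that $w\mapsto f$ commutes with conjugation.
\item In Step~1 the mechanism is misdescribed. At an active vertex $F_i\to0$, so non-$S$ messages leaving it tend to $a_e^{-1}$, not $a_e$. The exponent $k_i-1$ arises because each $S$-message into $i$ equals $-F_i/y^+_{j\to i,e}$; then $F_i=b_i\prod y$ becomes $f_i^{r_i}=(-1)^{k_i}p_iC_{S,i}(f)$. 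The power map becomes exact only after the change $w_i=f_iC_{S,i}^{-1/r_i}$.
\item Your Step~2 is essentially the paper's \cref{sf:degree-saturation}, and escaping roots are no obstacle there. Roots escaping to infinity or to the excluded locus could only lower the count. An extra point would have nearby generic points with $Q(G)+1$ preimages, so no valuation argument is needed.
\end{itemize}
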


Thus the same integer \(d-1\) is the dimension of
\(H^1(\Pp^1,\cO(-d))\) in the resolution and a ramification index of
the BP family at strong field; the comparison is statewise. The real
statement has no weak-coupling assumption: each sign sector is described
globally by vertex coordinates as a bounded perturbation of an invertible
diagonal map. The state charts do not use the ring; their exhaustion of
the complex fiber uses \(N=Q\).

The two theorems give the following numerical connection
(\cref{alg:generator-real-bridge}): if the maximum degree of \(G\) is
at most three, then for every fixed finite \(J\) with \(J_e\ne0\),
\begin{equation}\label{intro:generator-real-bridge}
 \sum_q\beta_{0,q}^{S_G}(\U_G)
 =2^{\nu(G)}=\min_h R_G(J,h).
\end{equation}
The left side counts vertex-disjoint cycle states, including the empty
state. In this degree range these are precisely the Eulerian states
which index the real sectors.

\subsection{A running example}

For \(K_4\) the resolution has ranks \(8,24,16\) in homological degrees
\(0,1,2\), so the generic complex BP degree is \(48\) and there are
\(8=1+4+3\) module generators, indexed by the empty state, the four
triangles, and the three four-cycles; the Artin reduction has Hilbert
series \(1+2t+3t^2+8t^3+16t^4\) and length \(30\), a different
invariant. At fixed nonzero real couplings the minimum real count is
\(2^3=8\), one root per Eulerian state, with one positive root at
strong field (\cref{fig:k4-states}, \cref{sf:k4-states}).

\input{sections/related-work}

\subsection{Organization}

\Cref{sec:degree} proves \(N=Q\); \cref{sec:incidence} constructs the
ring and its resolution; \cref{sec:strong-field} proves the strong-field
normal form and the exhaustion of the complex fiber; \cref{sec:real}
treats the real sectors. The supplementary material collects the
notation (\cref{supp:notation}), the elementary proofs omitted below,
and a longer comparison with related work.

\subsection{Formalization}

All clauses of \cref{thm:structure,thm:real} have been formalized in
Lean~4 with mathlib, under formal hypotheses recorded with each
statement: the generic degree in its counting form, and the geometric
objects through explicit models, namely the chart model of \(\U_G\),
a finite biproduct model of \(\bigwedge^qF_G^\vee\), and the
\v{C}ech model of \eqref{alg:zero-base-change}, including its products.
Several clauses are proved by arguments different from ours, notably
\cref{sf:degree-saturation}, which is replaced by a topological counting
argument, and the Gr\"obner-basis computation for \(K_4\) is external.
The accompanying repository~\cite{watanabe2026bplean} contains the
source code, build instructions, the precise formal hypotheses and, for
every statement of this paper, the corresponding formal declarations.

%% file: sections/related-work.tex
\subsection{Related work}
\label{intro:related}

Yedidia, Freeman, and Weiss identify BP fixed points with interior
stationary points of the Bethe free energy \cite{yedidia2005bethe}, and
Watanabe classifies the signed graphs with a unique positive fixed point
\cite{watanabe2011signed}; both concern positive messages. Knoll, Mehta,
Chen, and Pernkopf enumerate fixed points by homotopy continuation under
a different genericity \cite{knoll2018bp}. The index sets are the
generalized loops of loop calculus \cite{chertkov2006loopcalculusv2},
\(\theta_G\) and \(\omega_G\) are the polynomials of Watanabe and
Fukumizu \cite{watanabe2011graphpolynomials}, and the ring construction
follows the geometric method for syzygies \cite{weyman2003syzygies}.
Mixed multilinear systems \cite{bender2021mixedmultilinearv1} and
homological formulations of message passing
\cite{peltre2019homologicalv1,sergeantperthuis2025retractionsv1} are
compared in \cref{supp:related}.

%% file: sections/bp-degree.tex
\section{The admissible BP family and its generic degree}
\label{sec:degree}

\subsection{Coordinates and the open set being counted}

Throughout, a graph is finite, undirected, and loopless; parallel edges
are distinct factors. In particular, merging parallel interactions into a
single Ising coupling changes the BP equations and is not one of our
conventions. Let \(\vec E\) be the set of directed edges, and write
\(\bar\alpha\), \(t(\alpha)\), \(h(\alpha)\), and \(e(\alpha)\) for reversal,
tail, head, and the underlying undirected edge. Set
\[
 a_e=e^{2J_e},\qquad b_i=e^{2h_i}
\]
in the physical real model, and allow \(a_e,b_i\in\C^*\) independently
in its algebraic family. A directed message is represented by its ratio
\(y_\alpha\). Define the cavity ratio and the full vertex ratio by
\begin{equation}\label{eq:bp-ratios}
 X_\alpha=b_{t(\alpha)}
       \prod_{\substack{\gamma:\,h(\gamma)=t(\alpha)\\
                        \gamma\ne\bar\alpha}}y_\gamma,
 \qquad
 F_i=b_i\prod_{\gamma:\,h(\gamma)=i}y_\gamma.
\end{equation}
Only the reverse of the same edge is omitted from the cavity product.
For an incidence \((i,e)\), we also write
\(X_{i,e}=X_{i\to j,e}\), where \(j\) is the other endpoint of \(e\).
The rational BP equation is
\begin{equation}\label{eq:bp-update}
 y_\alpha=\frac{a_{e(\alpha)}X_\alpha+1}
                  {X_\alpha+a_{e(\alpha)}}.
\end{equation}
We use its polynomial residual
\[
 f_\alpha=y_\alpha(X_\alpha+a_{e(\alpha)})
                  -(a_{e(\alpha)}X_\alpha+1)
\]
on the open set
\begin{equation}\label{eq:admissibility}
 D=
 \prod_{\alpha\in\vec E}y_\alpha
 \prod_{\alpha\in\vec E}(X_\alpha+a_{e(\alpha)})
 \prod_{i\in V}(1+F_i)\ne0.
\end{equation}
The last factors ensure that vertex beliefs, and hence the locally
consistent edge beliefs below, can be normalized. All statements about
complex or real roots in this paper refer to this open set. Equivalently,
we use the localized coordinate ring, not the whole scheme obtained by
clearing denominators. A \emph{positive} root has every \(y_\alpha>0\)
and physical parameters \(a_e,b_i>0\).

\begin{example}[Why the last factor matters]
\label{ex:triangle-admissible}
Let \(G\) be a triangle with \(a_e=b_i=2\). A message travelling around
the cycle in one direction is updated by \(x\mapsto T(x)=(4x+1)/(2x+2)\),
and \(T^3(x)-x=-5(2x^2-2x-1)/(10x+4)\), so each orientation carries a
constant message \(x_+\) or \(x_-\), where \(x_\pm=(1\pm\sqrt3)/2\).
The rational BP equations thus have four solutions, all with nonzero
messages and update denominators. At every vertex
\(F_i=2x_{\mathrm{cw}}x_{\mathrm{ccw}}\), which is \(2\pm\sqrt3\) when
both orientations carry the same root and \(-1\) for the two mixed
solutions. The mixed solutions cannot be normalized to vertex beliefs
and are excluded by \(1+F_i\ne0\); the admissible count is
\(2=N(C_3)\). Since \(x_\pm\ne-1\), all four solutions admit the
sum-one message form \((y/(1+y),1/(1+y))\), so normalizability of
messages and of beliefs are different conditions. In general the
excluded locus \(F_i\equiv-1\) is nonempty of dimension \(\nu(G)-1\)
for generic parameters on a connected graph with a cycle
(\cref{supp:nonnormalizable}); the admissibility condition is what
makes the count finite (see also \cref{supp:related}).
\end{example}

Let \(P_G=(\C^*)^{m+n}\), and let
\[
 \pi_G:\mathcal X_G\longrightarrow P_G,\qquad
 \mathcal X_G=\{f_\alpha=0\text{ for all }\alpha,\ D\ne0\}
\]
be the scheme defined by these equations and localization.

\begin{proposition}\label{prop:bp-family}
The scheme \(\mathcal X_G\) is smooth and irreducible of dimension \(m+n\).
The map \(\pi_G\) is dominant and generically finite. There is a nonempty
Zariski open subset of \(P_G\) over which it is finite \'etale of a
well-defined positive degree \(N(G)\).
\end{proposition}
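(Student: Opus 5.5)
The key observation is that each equation \(f_\alpha=0\) is linear in the edge parameter \(a_{e(\alpha)}\):
\[
 f_\alpha=y_\alpha X_\alpha+a_e y_\alpha - a_e X_\alpha - 1
 = a_e(y_\alpha-X_\alpha)+(y_\alpha X_\alpha-1).
\]
For an edge \(e\) with directed versions \(\alpha,\bar\alpha\), the two equations therefore express \(a_e\) in two ways. I would solve the whole system for the parameters in terms of the messages.

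The plan is to regard \(\mathcal X_G\) as (an open subset of) a graph over the message torus. The variables are \(y\in(\C^*)^{2m}\), \(a\in(\C^*)^m\), and \(b\in(\C^*)^n\), for a total of \(3m+n\); there are \(2m\) equations. Fix \(y\) and \(b\). On the open set where \(y_\alpha\ne X_\alpha\), the equation \(f_\alpha=0\) gives \(a_e=(1-y_\alpha X_\alpha)/(y_\alpha-X_\alpha)\). It is cleaner, though, to work directly with the Jacobian. With respect to \(a_e\) and \(y_\alpha,y_{\bar\alpha}\), the partial derivatives of \((f_\alpha,f_{\bar\alpha})\) are:
\begin{itemize}
 \item \(\partial f_\alpha/\partial y_\alpha = X_\alpha+a_e\), which is nonzero on \(D\);
 \item \(\partial f_\alpha/\partial y_{\bar\alpha}=0\), because \(X_\alpha\) omits exactly \(y_{\bar\alpha}\);
 \item \(\partial f_{\bar\alpha}/\partial y_{\alpha}=0\) likewise.
\end{itemize}
The other messages do enter \(X_\alpha\), so the full derivative in \(y\) is not diagonal. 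I would therefore use the \(2m+n\)-dimensional block \((b,y)\)-independent pieces differently: compute the derivative with respect to the variables \(a\) and \(b\) together with a suitable subset of the \(y\)'s. This choice is what shows the equations cut out a smooth complete intersection of dimension \(3m+n-2m=m+n\).

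A concrete route to smoothness and irreducibility is an explicit parametrization. Take as coordinates the \(m+n\) quantities \((y_\alpha)_{\alpha}\) restricted to one orientation per edge, together with \(b\) — or, better, all of \(y\in(\C^*)^{2m}\) modulo the constraint that the two expressions for \(a_e\) agree. I would first try the cleanest version. Eliminate \(a_e\) using the equation for \(\alpha\); the equation for \(\bar\alpha\) then becomes a relation among \(y\) and \(b\). Given \(y\), these relations are monomial-linear in each \(b_i\) through \(X\). If that gets messy, I will fall back on the following argument. First, \(\mathcal X_G\) is cut out by \(2m\) equations in a smooth \((3m+n)\)-dimensional variety, so every component has dimension at least \(m+n\). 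Second, the Jacobian with respect to \((a_e,y_\alpha)\) pairs has full rank \(2m\) on \(D\). Indeed, for each edge the \(2\times2\) minor in \((a_e, y_\alpha)\) is upper triangular after an ordering argument, with diagonal \(X_\alpha+a_e\). This gives smoothness and dimension exactly \(m+n\).

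Irreducibility should come from showing that the projection \(\mathcal X_G\to(\C^*)^{2m}\times(\C^*)^n\), \((a,b,y)\mapsto(y,b)\), is an open immersion onto the locus where the two formulas for \(a_e\) coincide. The trouble is that this locus is not obviously irreducible. Instead I would use the projection to the \(m+n\) coordinates \((y_\alpha)_{\text{one orientation}},b\) and \ldots. Honestly, the step I expect to be the main obstacle is exactly this global irreducibility. What I would try first is a rational parametrization. Set \(F_i\) and use the edge beliefs: each fixed point corresponds to the Bethe pseudo-marginals, and these are parametrized by the single-vertex beliefs and edge correlations \((\mu_i,\chi_e)\in\C^{n+m}\), from which \(a\), \(b\), and \(y\) are recovered rationally. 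This is the standard inverse-BP map, where \(D\ne0\), and especially \(1+F_i\ne0\), is what makes the beliefs normalizable. This would exhibit \(\mathcal X_G\) as isomorphic to an open subset of \(\A^{m+n}\), which is smooth and irreducible.

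Dominance then follows, since the fibers of \(\pi_G\) are nonempty and finite over a dense set. Concretely, one can produce a single point with finite fiber at an unramified point, for instance the strong-field or tree-like limit, or the zero-field point where all \(y_\alpha=1\). Since \(\dim\mathcal X_G=\dim P_G\) and \(\pi_G\) is dominant, it is generically finite. Generic smoothness in characteristic zero makes \(\pi_G\) étale over a dense open set. Restricting further to the open set over which \(\pi_G\) is finite gives a finite étale cover of a connected base, which therefore has constant degree \(N(G)\). This degree is positive because the map is dominant.
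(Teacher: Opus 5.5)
Your proposal has a genuine gap at the step you flagged yourself, irreducibility, and the belief parametrization you fall back on does not close it. The inverse-BP map does not recover \(a_e\) rationally. With edge belief \((q_e,\rho_e,\sigma_e,\delta_e)=Z_e^{-1}(a_eXY,X,Y,a_e)\), the beliefs determine only the cross ratio \(q_e\delta_e/(\rho_e\sigma_e)=a_e^2\).

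Concretely, take an edge \(e=\{i,j\}\) and apply the substitution \(a_e\mapsto-a_e\), \(y_{i\to j,e}\mapsto-y_{i\to j,e}\), \(y_{j\to i,e}\mapsto-y_{j\to i,e}\), \(b_i\mapsto-b_i\), \(b_j\mapsto-b_j\). It negates \(X_{i,e},X_{j,e},Z_e\) and leaves every other cavity ratio and every \(F_k\) unchanged. It preserves all equations, the condition \(D\ne0\), and all beliefs, yet it moves the point of \(\mathcal X_G\).

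So the belief map is \(2^m\)-to-one. \(\mathcal X_G\) is not an open subset of \(\A^{m+n}\) in these coordinates. It is the cover \(a_e^2=u_e:=q_e\delta_e/(\rho_e\sigma_e)\) of the open set \(U\subset\A^{n+m}\) where \(\xi_i(1-\xi_i)\) and \(q_e\rho_e\sigma_e\delta_e\) are nonzero.

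Once you check that both directions divide only by units, this cover is finite \'etale, so it gives smoothness and dimension \(m+n\). It does not give irreducibility, because adjoining square roots of units splits whenever some product of the \(u_e\) is a square. The missing step is independence of the classes \(u_e\) in \(\C(\xi,q)^*/(\C(\xi,q)^*)^2\):
\begin{itemize}
\item Along the divisor \(q_e=0\), \(u_e\) has valuation one and every other \(u_{e'}\) has valuation zero, so no nontrivial product is a square.
\item The square roots therefore generate an extension of degree \(2^m\).
\item The covering ring, free of rank \(2^m\) over \(\cO(U)\), embeds in that field and is a domain.
\end{itemize}
This is exactly the paper's argument.

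Your Jacobian fallback fails as written. For rows \(f_\alpha,f_{\bar\alpha}\) and columns \(a_e,y_\alpha\) the minor is
\[
 \begin{pmatrix} y_\alpha-X_\alpha & X_\alpha+a_e\\ y_{\bar\alpha}-X_{\bar\alpha} & 0\end{pmatrix},
\]
with determinant \(-(X_\alpha+a_e)(y_{\bar\alpha}-X_{\bar\alpha})\). At the admissible point \(a=b=y=1\), every \(y_\alpha=X_\alpha=1\), so all \(a\)-columns vanish. Your chosen columns therefore have rank at most \(m\) there. Moreover, \(X_\alpha\) involves messages on other edges, so edgewise minors would not give rank \(2m\) in general. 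In any case a rank computation yields only smoothness, never irreducibility.

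That same point, with \(a_e=1\) and not merely \(b=y=1\), is where \(\partial f/\partial y=2I_{2m}\), and it is the point to use for dominance. Your sentence that fibers are ``nonempty and finite over a dense set'' presupposes dominance rather than proving it. The remaining steps are fine: generic finiteness, \'etaleness over a dense open set by generic smoothness, and constant positive degree over a connected open set.
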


\begin{proof}
Write a normalized vertex belief as \((\xi_i,1-\xi_i)\) and the four
normalized entries of an edge belief as
\((q_e,\rho_e,\sigma_e,\delta_e)=(q_e,\xi_i-q_e,\xi_j-q_e,1-\xi_i-\xi_j+q_e)\).
On the open subset of \(\A^{n+m}\) where \(\xi_i(1-\xi_i)\ne0\) and
\(q_e\rho_e\sigma_e\delta_e\ne0\), adjoin variables \(a_e\) with
\begin{equation}\label{eq:belief-cover}
 a_e^2=\frac{q_e\delta_e}{\rho_e\sigma_e};
\end{equation}
this is a finite \'etale cover of rank \(2^m\). For a BP root put
\(X=X_{i,e}\), \(Y=X_{j,e}\), \(a=a_e\), and
\(Z_e=aXY+X+Y+a=(1+F_i)(Y+a)=(1+F_j)(X+a)\). Then
\(\xi_i=F_i/(1+F_i)\) and
\((q_e,\rho_e,\sigma_e,\delta_e)=Z_e^{-1}(aXY,X,Y,a)\) give a point of
the cover. Conversely, \(F_i=\xi_i/(1-\xi_i)\),
\(X_{i,e}=a_e\rho_e/\delta_e\), \(y_{j\to i,e}=F_i\delta_e/(a_e\rho_e)\),
and \(b_i=F_i/\prod_{e\ni i}y_{\mathrm{other}\to i,e}\) recover a BP
root: for instance \(X_{i,e}+a_e=a_e(1-\xi_j)/\delta_e\ne0\), and
\eqref{eq:belief-cover} verifies the update equations. All divisions are
by units, so these are inverse scheme maps. The cover is irreducible:
in \(\C(\xi,q)\) the valuation along \(q_e=0\) of the \(e\)th class
\(q_e\delta_e/(\rho_e\sigma_e)\) is one and that of every other class is
zero, so the \(m\) square classes are independent and their square
roots generate a field extension of degree \(2^m\); the covering ring is
free over its base and embeds in this field, hence is a domain. Thus
\(\mathcal X_G\) is smooth and irreducible of dimension \(m+n\).

At \(a_e=b_i=y_\alpha=1\) the Jacobian \(\partial f/\partial y\) equals
\(2I_{2m}\), so \(\pi_G\) is locally invertible there and dominant, and
equal dimensions give generic finiteness. Inverting the leading
coefficients of algebraic equations for a finite set of coordinate
generators makes \(\pi_G\) finite over a nonempty open subset of
\(P_G\); inverting a discriminant of the separable function-field
extension and imposing generic freeness makes it \'etale there, of
rank \(N(G)\).
\end{proof}

\subsection{Auxiliary edge coordinates}

For an incidence \((i,e)\), define
\begin{equation}\label{eq:aux-coordinates}
 A_{e,i}=\frac{X_{i,e}}{a_e},\qquad
 s_e=a_e^2,\qquad
 \zeta_i=\frac{b_i}{\prod_{e\ni i}a_e}.
\end{equation}
Here \(\zeta_i\) is the rescaled vertex parameter.
If \(d_i\) is the degree of \(i\), the BP equations imply
\begin{align}
 F_i&=A_{e,i}\frac{1+s_eA_{e,j}}{1+A_{e,j}},
 &F_j&=A_{e,j}\frac{1+s_eA_{e,i}}{1+A_{e,i}},
 \label{eq:aux-edge}\\
 \zeta_i&=F_i^{\,1-d_i}\prod_{e\ni i}A_{e,i}.
 \label{eq:aux-vertex}
\end{align}
For an isolated vertex the latter equation reads \(F_i=\zeta_i\).
The relevant open conditions are \(F_i\ne0,-1\) and
\(A_{e,i}\ne0,-1\).

These equations recover the original BP scheme on the open set. Indeed,
put
\begin{equation}\label{eq:aux-recovery}
 y_{j\to i,e}=\frac{F_i}{a_eA_{e,i}}.
\end{equation}
Equation \eqref{eq:aux-vertex} gives
\(b_i\prod_{e\ni i}y_{j\to i,e}=F_i\), and
\eqref{eq:aux-edge} gives the BP update with cavity ratio \(a_eA_{e,i}\).
Its denominator is \(a_e(1+A_{e,i})\ne0\). Conversely the original
BP equations give \eqref{eq:aux-edge}--\eqref{eq:aux-vertex}.
This local coordinate-ring isomorphism preserves multiplicities.
Choosing square roots \(a_e\) of fixed \(s_e\) does not multiply a
fiber's root count: each such choice determines a different
parameter point \((a,b)\).

Eliminating \(A_{e,j}\) from an edge gives a useful necessary equation:
\begin{equation}\label{eq:edge-quadratic}
 s_e A_{e,i}^2+
 \left[
  1-F_i+(s_e-1)\left(1-\frac{1+F_i}{1+F_j}\right)
 \right]A_{e,i}-F_i=0.
\end{equation}
The division by \(1+F_j\) is allowed. Also \(1+s_eA_{e,i}\ne0\),
since the other edge equation gives
\(F_j(1+A_{e,i})=A_{e,j}(1+s_eA_{e,i})\).
We use \eqref{eq:edge-quadratic} to control limits; the lifting argument
will always use both original edge equations.

\subsection{Excluding all weak-coupling boundary limits}

Let \(K\) be an algebraically closed field of characteristic zero and
\[
 \mathcal K=\overline{K((t))}.
\]
Extend the \(t\)-adic valuation to \(\mathcal K\), with \(v(t)=1\),
and write \(\mathcal O\) for its valuation ring. Such an extension
exists by \citet[Tag~00IA]{stacks2026cohomology}. Its residue field is
\(K\): the residue extension in each finite subextension of
\(\mathcal K/K((t))\) is finite, and \(K\) is algebraically closed
\citep[Tag~0ASH]{stacks2026cohomology}.
A bar denotes reduction \(\mathcal O\to K\).

\begin{lemma}[Boundary exclusion]\label{lem:weak-boundary}
Suppose \(v(s_e)=0\), \(\overline{s_e}=1\), and
\(v(\zeta_i)=0\), \(\overline{\zeta_i}\ne-1\).
Every admissible \(\mathcal K\)-solution satisfies
\[
 v(F_i)=v(A_{e,i})=0,\qquad \overline{F_i}\ne-1.
\]
The parameters \(\zeta_i\) may depend on \(t\).
\end{lemma}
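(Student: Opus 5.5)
The approach is to reduce everything to the multiplicative correction factors on the edges and show that they all reduce to \(1\). For an incidence \((i,e)\) with other endpoint \(j\), put \(\varepsilon_e=s_e-1\), so \(v(\varepsilon_e)>0\). Set
\[
 g_{e,j}=\frac{A_{e,j}}{1+A_{e,j}},\qquad
 r_{e,i}=\frac{A_{e,i}}{F_i}=\frac{1+A_{e,j}}{1+s_eA_{e,j}}=\frac{1}{1+\varepsilon_e g_{e,j}},
\]
using \eqref{eq:aux-edge}. Then \eqref{eq:aux-vertex} becomes
\[
 \zeta_i=F_i\prod_{e\ni i}r_{e,i}.
\]
Suppose we show \(v(\varepsilon_e g_{e,j})>0\) for every incidence. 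Then every \(r_{e,i}\) is a unit with \(\overline{r_{e,i}}=1\). Hence \(v(F_i)=v(\zeta_i)=0\) and \(\overline{F_i}=\overline{\zeta_i}\ne-1\). Finally \(A_{e,i}=F_ir_{e,i}\) has valuation zero. Isolated vertices are immediate, since \(F_i=\zeta_i\). So the whole lemma reduces to one claim:
\[
 v(\varepsilon_e g_{e,j})>0\quad\text{for all incidences},
\]
that is, no \(A_{e,j}\) approaches \(-1\) faster than \(\varepsilon_e\) can compensate.

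To prove this claim I would argue by contradiction with an extremal choice. Among all incidences, pick one minimizing \(\mu=v(\varepsilon_eg_{e,j})\), and assume \(\mu\le 0\). Besides the relations above, I would use the symmetric edge identity
\[
 (1+F_i)(1+A_{e,j})=(1+F_j)(1+A_{e,i}),
\]
which is the identity \(Z_e=(1+F_i)(Y+a)=(1+F_j)(X+a)\) from the proof of \cref{prop:bp-family}, divided by \(a_e\). I would also use the two expressions \(F_i=A_{e,i}/r_{e,i}\) and \(F_j=A_{e,j}/r_{e,j}\) on the same edge.

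The case analysis runs as follows. If \(\mu\le0\), then \(v(g_{e,j})\le -v(\varepsilon_e)<0\), so \(A_{e,j}\to-1\) and \(\overline{A_{e,j}}=-1\). The edge identity then forces either \(1+F_i\) to be small or \(1+A_{e,i}\) to be large relative to \(1+F_j\). Combining this with \(1+s_eA_{e,i}\ne0\) and the quadratic \eqref{eq:edge-quadratic}, whose reduction at \(\overline{s_e}=1\) factors as \((A-F)(A+1)\), I would propagate the smallness of \(1+A\) along incident edges. The extremality of \(\mu\) then lets me compare valuations at the vertex equation \(\zeta_i=F_i\prod r_{e,i}\).

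The goal of that comparison is to show that the vertex where the minimum is attained has \(v(\zeta_i)\ne0\) or \(\overline{\zeta_i}=-1\), contradicting the hypotheses. Concretely, the factor \(r_{e,i}\) with \(v(r_{e,i})=-\mu\ge0\) should be forced to pair with \(\overline{F_i}=-1\), while the other factors have valuation at least zero by minimality.

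This propagation step is the main obstacle, because valuations of \(1+A\) and \(1+F\) can cascade around a cycle. If the local argument does not close, the fallback is a tropical count: sum the valuations of \(1+A_{e,i}\) and \(1+F_i\) around the subgraph of incidences attaining \(\mu\), and use the edge identity, which is balanced on each edge, to show the totals cannot be consistent unless \(\mu>0\).
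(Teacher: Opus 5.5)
Your reduction to \(\zeta_i=F_i\prod_{e\ni i}r_{e,i}\) is correct. The claim you reduce the lemma to is false, however, so neither the extremal argument nor the tropical fallback can prove it. If \(v(\varepsilon_e g_{e,j})>0\) held at every incidence, every \(r_{e,i}\) would reduce to \(1\). Every admissible solution would then have \(\overline{A_{e,i}}=\overline{F_i}=\overline{\zeta_i}\), so every solution would specialize to the empty state.

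Edges on which \(A\) tends to \(-1\) genuinely occur. Take a triangle with all \(s_e=s\), \(v(s-1)>0\), and all \(\zeta_i=\zeta\). The symmetric ansatz \(A_{e,i}=A\), \(F_i=F\) turns \eqref{eq:aux-edge} and \eqref{eq:aux-vertex} into \(F=A^2/\zeta\) and \((A+1)(A-\zeta)=(s-1)\zeta A\). Since \(\mathcal K\) is algebraically closed and \(\overline\zeta\ne-1\), this has a root with \(\overline A=-1\) and \(v(1+A)=v(s-1)\). That root is admissible, with \(\overline F=\overline\zeta^{\,-1}\ne-1\). At this solution \(\varepsilon_e g_{e,j}\) reduces to \(-(1+\overline F)\ne0\), so \(\mu=0\) and \(\overline{r_{e,i}}=-1/\overline F\ne1\). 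It is the lift of the state \(S=E\) in \cref{lem:unique-weak-lift}.

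So the case \(\mu=0\) of your extremal argument is realized by actual solutions and yields no contradiction. The correct edgewise dichotomy is \(\overline{r_{e,i}}\in\{1,-1/\overline{F_i}\}\), and the product formula alone cannot determine \(v(F_i)\).

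What is missing is a direct argument that each \(F_i\) is a unit avoiding \(-1\), which is what the paper gives.
\begin{enumerate}
\item Apply a spin change \(F_i\mapsto F_i^{-1}\) at the vertices with \(v(F_i)<0\), with the induced changes of \(s_e,\zeta_i,A_{e,i}\). This keeps \(\overline{s_e}=1\) and \(\overline{\zeta_i}\ne-1\) and makes every \(F_i\) integral.
\item Choose a vertex maximizing \(\delta_i=v(1+F_i)>0\). Maximality makes \((1+F_i)/(1+F_j)\) integral for every neighbor. Hence \eqref{eq:edge-quadratic} has integral coefficients and reduces to \((A+1)^2\). Every incident \(A_{e,i}\) then reduces to \(-1\), and \eqref{eq:aux-vertex} forces \(\overline{\zeta_i}=-1\), a contradiction.
\item At a vertex with \(v(F_i)=p>0\), the quadratic reduces to \(A(A+1)\). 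Incident variables either reduce to \(-1\) or satisfy \(v(A)=p\) and \(\overline{A/F_i}=1\). Counting valuations in \eqref{eq:aux-vertex} shows exactly one is of the first type, and again \(\overline{\zeta_i}=-1\).
\item Only after this do the unit leading and constant coefficients of the quadratic make every \(A_{e,i}\) a unit.
\end{enumerate}
The extremal quantity that works is the vertex valuation \(v(1+F_i)\), chosen so that the quadratic at the extremal vertex is integral. Your incidence-wise extremum instead targets an inequality that fails.
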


\begin{proof}
We first remove poles of the full ratios by spin changes.
For \(\epsilon_i\in\{1,-1\}\), exchanging the two spin labels at
vertices with \(\epsilon_i=-1\) transforms
\[
 a'_e=a_e^{\epsilon_i\epsilon_j},\qquad
 b'_i=b_i^{\epsilon_i},\qquad
 y'_{j\to i,e}=y_{j\to i,e}^{\epsilon_i}.
\]
In the auxiliary coordinates this is
\begin{align}
 F'_i&=F_i^{\epsilon_i},&
 s'_e&=s_e^{\epsilon_i\epsilon_j},\nonumber\\
 A'_{e,i}&=
 s_e^{\epsilon_i(1-\epsilon_j)/2}A_{e,i}^{\epsilon_i},&
 \zeta'_i&=\zeta_i^{\epsilon_i}
       \prod_{\substack{e=\{i,j\}\\\epsilon_j=-1}}s_e^{\epsilon_i}.
 \label{eq:spin-gauge}
\end{align}
These transformations preserve admissibility and the equations.
All transformed \(s_e\) still reduce to one, and the transformed
\(\zeta_i\) reduce to \(\overline{\zeta_i}^{\,\epsilon_i}\ne-1\).
Choose \(\epsilon_i=-1\) precisely where \(v(F_i)<0\); then all
transformed \(F_i\) lie in \(\mathcal O\). We work in these coordinates.

Put \(\delta_j=v(1+F_j)\) for every vertex \(j\).
Suppose some \(F_i\) reduces to \(-1\). Choose \(i\) maximizing
\(\delta_i>0\). For every neighbor \(j\),
\[
 v\!\left(\frac{1+F_i}{1+F_j}\right)=\delta_i-\delta_j\ge0.
\]
The quadratic \eqref{eq:edge-quadratic} at \(i\) has integral
coefficients and unit leading and constant coefficients. Its reduction
is \((A+1)^2\). Thus every incident \(A_{e,i}\) is a unit reducing
to \(-1\). Reducing \eqref{eq:aux-vertex} yields
\[
 \overline{\zeta_i}=(-1)^{1-d_i}(-1)^{d_i}=-1,
\]
a contradiction. An isolated vertex satisfies \(F_i=\zeta_i\)
and gives the same conclusion. Hence every \(1+F_i\) is a unit.

Next suppose \(p=v(F_i)>0\). The linear coefficient of
\eqref{eq:edge-quadratic} reduces to one and its reduction is
\(A(A+1)\). A negative valuation of \(A\) would make its quadratic
term uniquely minimal. Thus there are two possible types of incident variable:
a unit reducing to \(-1\), or a variable with positive valuation.
In the second case, writing the equation as
\(A(s_eA+H)=F_i\), where \(\overline H=1\), shows that
\[
 v(A)=p,\qquad \overline{A/F_i}=1.
\]
Let \(k\) be the number of variables of the first type. Taking
valuations in \eqref{eq:aux-vertex} gives
\[
 0=(1-d_i)p+(d_i-k)p=(1-k)p,
\]
so \(k=1\). Dividing each positive-valuation factor by \(F_i\)
in \eqref{eq:aux-vertex} and reducing then gives
\(\overline{\zeta_i}=(-1)^k=-1\), again a contradiction.
Thus every transformed \(F_i\) is a unit. Undoing the spin changes
shows that the original \(F_i\) are units as well and avoid \(-1\)
in the residue field.

Finally, in \eqref{eq:edge-quadratic} all coefficients are integral
and the leading and constant coefficients are units. A root with
negative valuation would make the quadratic term uniquely minimal;
a root with positive valuation would make the constant term uniquely
minimal. Neither is possible. Hence all \(A_{e,i}\) are units.
\end{proof}

\subsection{The limit states and their unique lifts}

Choose \(c_e,\zeta_i\) algebraically independent over \(\C\), let
\(K=\overline{\C(c_e,\zeta_i)}\), and set
\begin{equation}\label{eq:generic-weak-line}
 s_e=1+tc_e.
\end{equation}
The preceding lemma applies to every admissible \(\mathcal K\)-solution.
The edge equations at \(t=0\), with bars omitted, are
\[
 (A-F_i)(B+1)=0,\qquad (B-F_j)(A+1)=0.
\]
As \(F_i,F_j\ne-1\), the two possibilities are
\[
 (A,B)=(F_i,F_j)\quad\text{or}\quad(A,B)=(-1,-1).
\]
Call the second type active, and let \(S\subseteq E\) be the active
edge set. If \(k_i=d_S(i)\), the vertex equations reduce to
\begin{equation}\label{eq:weak-limit-vertex}
 \zeta_i=(-1)^{k_i}F_i^{\,1-k_i}.
\end{equation}
For \(k_i=0\) there is one root \(F_i=\zeta_i\); for \(k_i=1\)
there is none; and for \(k_i\ge2\) there are \(k_i-1\) distinct roots.
They are nonzero and different from \(-1\).
Different active sets give different edge coordinates. Consequently
there are exactly
\begin{equation}\label{eq:state-sum-degree}
 Q(G)=\sum_{S\subseteq E}\wt(S),\qquad
 \wt(S)=\prod_{i\in V}\varphi(d_S(i))
\end{equation}
limit points in these auxiliary coordinates, where \(\varphi(0)=1\)
and \(\varphi(k)=k-1\) for \(k\ge1\).
The active limits \(A=B=-1\) lie outside the original admissible
fiber at zero coupling; they are used only as limits of admissible
branches.

\begin{lemma}\label{lem:unique-weak-lift}
Each of these \(Q(G)\) limit points lifts to one and only one
admissible \(\mathcal K\)-solution. The lift lies in \(K[[t]]\) in the
\((F,A)\) coordinates and is simple over \(K((t))\).
\end{lemma}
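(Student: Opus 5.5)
The plan is a Hensel/implicit-function argument, carried out after rescaling the active edges so that the degeneracy at \(A=B=-1\) disappears. Fix a limit point with active set \(S\), boundary values \(\overline F_i\) solving \eqref{eq:weak-limit-vertex}, and \(\overline A_{e,i}=\overline F_i\) on inactive edges. The unknowns are \(F_i\) and \(A_{e,i}\), that is, \(n+2m\) of them. The equations are the two edge equations \eqref{eq:aux-edge} per edge and the vertex equations \eqref{eq:aux-vertex}, also \(n+2m\) in number.

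\textbf{Rescaled chart.} On an active edge \(e=\{i,j\}\) I substitute \(A_{e,i}=-1+tu_{e,i}\) and \(A_{e,j}=-1+tu_{e,j}\). With \(s_e=1+tc_e\),
\[
 1+s_eA_{e,j}=t\,(u_{e,j}-c_e)+t^2c_eu_{e,j},\qquad 1+A_{e,j}=t\,u_{e,j}.
\]
Hence the edge equation becomes, after cancelling \(t\),
\[
 F_i u_{e,j}=A_{e,i}\,(u_{e,j}-c_e+tc_eu_{e,j}).
\]
At \(t=0\) this reads \(F_iu_{e,j}=-(u_{e,j}-c_e)\), so \(u_{e,j}=c_e/(1+F_i)\). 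The derivative in \(u_{e,j}\) is \(F_i+1\ne0\). Symmetrically, \(u_{e,i}=c_e/(1+F_j)\).

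On an inactive edge I use the polynomial form \((A-F_i)(1+B)-tc_eAB=0\), together with its symmetric counterpart. The derivative in \(A\) at \(t=0\) is \(1+F_j\ne0\), and the cross derivatives vanish there, so this block is invertible.

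\textbf{Vertex equations.} After substituting the edge variables, the vertex equation reduces at \(t=0\) to
\[
 \zeta_i=(-1)^{k_i}F_i^{\,1-k_i}.
\]
Its \(F_i\)-derivative is \((1-k_i)(-1)^{k_i}F_i^{-k_i}\). This is nonzero because \(k_i\ne1\) at any limit point and \(F_i\ne0\). So the full Jacobian at \(t=0\) is block triangular with invertible diagonal blocks: edge variables first, then \(F\).

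\textbf{Existence and simplicity.} The multivariate Hensel lemma over the complete ring \(K[[t]]\) gives a solution in \(K[[t]]\) reducing to the chosen point. Undoing the rescaling gives \(A=-1+tu\) in \(K[[t]]\). This solution is admissible: \(F_i\) and \(A_{e,i}\) are units, and \(\overline F_i\ne -1\). On active edges \(1+A=tu\) with \(\overline u=c_e/(1+\overline F_j)\ne0\), so \(1+A\ne0\). Invertibility of the Jacobian over \(K((t))\) gives simplicity. The rescaling is an isomorphism of \(K((t))\)-algebras because \(t\) is a unit there.

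\textbf{Uniqueness: reduction to the chart.} Take any admissible \(\mathcal K\)-solution reducing to this limit point. By \cref{lem:weak-boundary}, all \(F_i\) and \(A_{e,i}\) are units with \(\overline F_i\ne-1\). I must show it lies in the \(\mathcal O\)-points of the rescaled chart. From \(F_j(1+A)=B(1+s_eA)\) and \(1+s_eA=(1+A)+tc_eA\) we get
\[
 (1+A)\Bigl(1-\frac{F_j}{B}\Bigr)=-tc_eA.
\]
On an active edge \(1-F_j/B\) reduces to \(1+\overline F_j\ne0\). Therefore \(v(1+A)=1\) exactly, and \(u=(1+A)/t\in\mathcal O\) has the reduction predicted above.

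\textbf{Uniqueness: two solutions coincide.} Suppose two \(\mathcal O\)-solutions in the chart share a reduction, and let \(\delta\) be their difference. Taylor expansion gives \(J\delta=O(\delta^2)\) with \(J\) invertible over \(\mathcal O\). This forces \(v(\delta)\ge2v(\delta)\), hence \(\delta=0\). This step does not need completeness of \(\mathcal O\), so it covers all of \(\mathcal K\). It is also why the Hensel solution is the only lift. Distinct limit points obviously have distinct lifts.

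\textbf{Main obstacle.} The step I expect to be hardest is the active-edge analysis: the naive Jacobian vanishes there, so the whole argument depends on choosing the rescaling and checking that the exact valuation \(v(1+A)=1\) holds for every admissible solution, not just the constructed one.
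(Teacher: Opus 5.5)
Your proof is correct, and its skeleton matches the paper's. It has the same three parts: an invertible Jacobian at $t=0$, computed as a Schur complement that reduces to the diagonal derivative of \eqref{eq:weak-limit-vertex}; Hensel lifting over $K[[t]]$; and uniqueness in $\mathcal O$ by linearizing the difference of two solutions, which needs no completeness.

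The one real difference is your rescaled chart $A=-1+tu$ on active edges, and the premise behind it is false: the Jacobian does not vanish at an active limit. Take the cleared residuals $E_{e,i}=(A-F_i)(B+1)+tc_eAB$ and $E_{e,j}=(B-F_j)(A+1)+tc_eAB$. At $A=B=-1$, $t=0$, only their diagonal derivatives vanish. The cross derivatives are $-1-F_i$ and $-1-F_j$, so the edge block is antidiagonal and invertible. Your rescaled block is antidiagonal too: the derivative you computed, in $u_{e,j}$ of the equation at $i$, is itself a cross derivative.

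The paper therefore works directly in the $(F,A)$ coordinates. Uniqueness then follows from \cref{lem:weak-boundary} alone, because every admissible solution already has all $F_i$ and $A_{e,i}$ as units in $\mathcal O$, so your extra step $v(1+A)=1$ is unnecessary. What the rescaling does buy is visibility. On active edges, $1+A\neq0$ becomes the statement $\overline{u}\neq0$, whereas the paper reads it off the first coefficient $A+1=tc_e/(1+F_j^{(0)})+O(t^2)$ of the lift. Your valuation computation also shows directly that every admissible solution has $v(1+A)=1$ on active edges, not only the constructed one.

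Three small corrections. First, the full Jacobian is not block triangular: the edge equations involve $F$, and the vertex equations involve the inactive $A_{e,i}$. What you actually use is invertibility via the Schur complement, which is exactly what your ``substitute the edge variables'' step computes. Second, the inactive residual is $(A-F_i)(1+B)+tc_eAB$, with a plus sign, as your own rescaled active-edge equation confirms. Third, for Hensel over $K[[t]]$ you should multiply each vertex equation by the unit $F_i^{d_i-1}$ so that the system is polynomial.
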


\begin{proof}
Use the full edge residuals
\begin{align*}
 E_{e,i}&=(A-F_i)(B+1)+tc_eAB,\\
 E_{e,j}&=(B-F_j)(A+1)+tc_eAB,
\end{align*}
and \(V_i=F_i^{1-d_i}\prod_{e\ni i}A_{e,i}-\zeta_i\).
At a limit point the edge Jacobian in \((A,B)\) is, respectively,
\[
 \begin{pmatrix}1+F_j&0\\0&1+F_i\end{pmatrix}
 \quad\text{or}\quad
 \begin{pmatrix}0&-1-F_i\\-1-F_j&0\end{pmatrix}.
\]
Both are invertible. After eliminating these blocks, the Schur
complement is the diagonal derivative of
\eqref{eq:weak-limit-vertex}, with entries
\[
 (1-k_i)(-1)^{k_i}F_i^{-k_i}\ne0.
\]
The full Jacobian is therefore invertible.
To apply Hensel lifting, multiply each \(V_i\) with \(d_i\ge1\)
by the unit \(F_i^{d_i-1}\). Together with the edge residuals, these
give a polynomial system \(H\) over \(K[[t]]\) with the same roots
near the limit point and an invertible Jacobian there.
Starting from that point, a solution modulo \(t^r\) lifts uniquely
modulo \(t^{r+1}\): its next coefficient solves the linear system
with coefficient matrix equal to the residual Jacobian at \(t=0\).
Completeness of \(K[[t]]\) gives a solution \(x^*\in K[[t]]^{n+2m}\).
This is the simple-root Hensel argument
\citep[Tag~04GM]{stacks2026cohomology}.

Uniqueness also holds in \(\mathcal O\), without a completeness
assumption on that ring. If \(x\in\mathcal O^{n+2m}\) is another
solution with the same reduction, polynomial differences give
\[
 H(x)-H(x^*)=M(x,x^*)(x-x^*),\qquad
 \overline{M(x,x^*)}=J_{\overline H}(\overline{x^*}).
\]
The matrix \(M(x,x^*)\) has entries in \(\mathcal O\) and unit
determinant, so \(x=x^*\). By \cref{lem:weak-boundary}, this covers
every admissible \(\mathcal K\)-solution with the specified limit.

For inactive edges \(A+1\) and \(B+1\) are units. For active edges
the first nonzero coefficients are
\[
 A+1=\frac{tc_e}{1+F_j^{(0)}}+O(t^2),\qquad
 B+1=\frac{tc_e}{1+F_i^{(0)}}+O(t^2).
\]
These are nonzero series. All \(F_i,A_{e,i},1+F_i\) are units, so
\eqref{eq:aux-recovery} gives admissibility over \(K((t))\).
The Jacobian remains invertible there, proving simplicity.
\end{proof}

\begin{theorem}[Generic degree]\label{thm:degree}
For every finite loopless multigraph, with the factorization and
admissibility conditions above,
\[
 N(G)=Q(G).
\]
\end{theorem}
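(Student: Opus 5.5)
The plan is to read off \(N(G)\) from the fiber of \(\pi_G\) over the generic point of the one-parameter line \eqref{eq:generic-weak-line}, using \cref{lem:weak-boundary} and \cref{lem:unique-weak-lift}. The first step is to pass from \(\pi_G\) to the auxiliary parameters \((s,\zeta)\). By \cref{prop:bp-family}, \(\pi_G\) is finite \'etale of degree \(N(G)\) over a nonempty Zariski open \(U\subset P_G\). The map \((a,b)\mapsto(s,\zeta)\) with \(s_e=a_e^2\) and \(\zeta_i=b_i/\prod_{e\ni i}a_e\) is a finite étale cover of tori. The recovery \eqref{eq:aux-recovery} identifies the fiber of \(\pi_G\) over \((a,b)\) with the admissible solutions of \eqref{eq:aux-edge}--\eqref{eq:aux-vertex} at the corresponding \((s,\zeta)\), for the given choice of \(a\). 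This is the remark after \eqref{eq:aux-recovery}. Hence the auxiliary system, with \(a\) fixed over \((s,\zeta)\), is generically finite étale of the same degree \(N(G)\).

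The second step is to show that the generic point of the line \(s_e=1+tc_e\), \(\zeta_i\) fixed, lands in the good locus. The line is defined over \(K=\overline{\C(c,\zeta)}\), with \(c,\zeta\) algebraically independent. The complement of \(U\), pushed to \((s,\zeta)\)-space, is a proper closed set cut out by polynomials over \(\C\) (or \(\overline{\mathbb Q}\)). Consider its point over the field \(K(t)\), where \(s_e=1+tc_e\). Its coordinates \(1+tc_e,\zeta_i\) are algebraically independent over \(\C\), since \(t\) is transcendental over \(K\) and \(c,\zeta\) are independent. So this point is the generic point of the torus and lies in the image of \(U\). Base changing the finite étale map shows that the number of \(\mathcal K\)-points of the admissible fiber, over \(\mathcal K=\overline{K((t))}\supset\overline{K(t)}\), equals \(N(G)\), and all of them are simple.

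The third step counts these \(\mathcal K\)-points. By \cref{lem:weak-boundary}, applicable since \(\overline{s_e}=1\) and \(\zeta_i\) is transcendental (so \(\ne-1\)), every admissible \(\mathcal K\)-solution has all \(F_i,A_{e,i}\) in \(\mathcal O^\times\). It therefore has a reduction, and that reduction solves the \(t=0\) system. The analysis before \cref{lem:unique-weak-lift} shows that the reduction is one of the \(Q(G)\) limit points. By \cref{lem:unique-weak-lift}, each limit point has exactly one admissible lift, and distinct limit points have distinct lifts because the reduction map separates them. So the fiber has exactly \(Q(G)\) points, and therefore \(N(G)=Q(G)\).

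The main obstacle is the bookkeeping in the first step, in particular the square-root choice of \(a_e\). For the base change along \(K(t)\to\mathcal K\) I would fix one square root \(a_e=\sqrt{1+tc_e}\in\mathcal K\) and set \(b_i=\zeta_i\prod a_e\). This gives a single \(\mathcal K\)-point of \(P_G\), which is still generic, and its admissible fiber is bijective with the auxiliary admissible solutions. The same argument applies verbatim to isolated vertices, using \(F_i=\zeta_i\).
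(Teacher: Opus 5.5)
Your proposal is correct and follows essentially the same route as the paper: embed the generic parameters into $\mathcal K$ along $s_e=1+tc_e$ with a chosen square root $a_e$ and $b_i=\zeta_i\prod_{e\ni i}a_e$, and check that this point lies in the finite \'etale locus. The paper checks this by showing that $a,b$ are algebraically independent over $\C$, which is equivalent to your argument that the point avoids the image of the bad locus. Then \cref{lem:weak-boundary} and \cref{lem:unique-weak-lift} show that the $N(G)$-point fiber has exactly $Q(G)$ points; the paper additionally records the trivial empty-graph case, where both sides equal one.
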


\begin{proof}
The homomorphism
\[
 \C(s_e,\zeta_i)\longrightarrow K((t)),\qquad
 s_e\longmapsto1+tc_e
\]
is injective. Indeed, if a nonzero polynomial \(H\) became zero,
the polynomial identity \(H(1+tc,\zeta)=0\), evaluated at \(t=1\),
would contradict the algebraic independence of \(c,\zeta\).
Choose \(a_e=(1+tc_e)^{1/2}\) with constant coefficient one and
put \(b_i=\zeta_i\prod_{e\ni i}a_e\).
The \(m+n\) quantities \(s,\zeta\) are rational functions of the
\(m+n\) quantities \(a,b\) and are algebraically independent.
Thus \(a,b\) are algebraically independent as well.
The original parameter field therefore embeds into \(K((t))\).

The finite \'etale generic fiber of \cref{prop:bp-family}, after
base change to the algebraically closed field \(\mathcal K\), has exactly
\(N(G)\) points. By \cref{lem:weak-boundary} each has one of the
\(Q(G)\) limits just enumerated, and by \cref{lem:unique-weak-lift}
each limit has exactly one simple admissible lift. This proves
the equality. For the empty graph both the empty message tuple
and the state sum have cardinality one.
\end{proof}

\begin{corollary}\label{cor:elementary-degree}
The generic degree is multiplicative under disjoint union, unchanged
by attaching a leaf or subdividing an edge, equals one on a forest,
and equals two on a connected graph with a single cycle.
\end{corollary}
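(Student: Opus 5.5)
By \cref{thm:degree} it suffices to prove each clause for the state sum \(Q(G)=\sum_{S\subseteq E}\wt(S)\), with \(\wt(S)=\prod_{v}\varphi(d_S(v))\). Everything reduces to how \(Q\) behaves under the three graph operations, plus a direct enumeration for the two base cases.

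\emph{Disjoint union.} Let \(G=G_1\sqcup G_2\). Edge subsets \(S\) of \(G\) correspond bijectively to pairs \((S_1,S_2)\) with \(S_k\subseteq E(G_k)\). Each vertex lies in exactly one \(G_k\), and its degree in \(S\) equals its degree in \(S_k\). So \(\wt(S)=\wt(S_1)\wt(S_2)\), and summing over pairs gives \(Q(G)=Q(G_1)Q(G_2)\).

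\emph{Attaching a leaf.} Let \(G'\) be obtained from \(G\) by adding a new vertex \(\ell\) and an edge \(f=\{\ell,u\}\). Split the subsets \(S'\subseteq E(G')\) according to whether they contain \(f\).
\begin{itemize}
\item If \(f\notin S'\), then \(d_{S'}(\ell)=0\) contributes the factor \(\varphi(0)=1\), so these states reproduce \(Q(G)\) exactly.
\item If \(f\in S'\), then \(d_{S'}(\ell)=1\) contributes \(\varphi(1)=0\), so all these states have weight zero.
\end{itemize}
Hence \(Q(G')=Q(G)\).

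\emph{Subdividing an edge.} Let \(e=\{u,v\}\) be replaced by a path \(u\,w\,v\) with edges \(e_1=\{u,w\}\) and \(e_2=\{w,v\}\). Split the subsets of \(E(G')\) by how they meet \(\{e_1,e_2\}\).
\begin{itemize}
\item States containing exactly one of \(e_1,e_2\) give \(d(w)=1\), hence \(\varphi(1)=0\), and vanish.
\item States containing neither correspond to subsets \(S\) of \(G\) with \(e\notin S\), and the extra factor is \(\varphi(0)=1\).
\item States containing both correspond to subsets \(S\) of \(G\) with \(e\in S\): the degrees at \(u\) and \(v\) agree with those in \(S\), and the extra factor is \(\varphi(2)=1\).
\end{itemize}
The weights therefore match term by term, and \(Q\) is unchanged.

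\emph{Forests.} A forest with at least one edge has a leaf, and removing it reduces to a smaller forest. Repeating this ends at an edgeless graph, which is a disjoint union of isolated vertices, each with \(Q=1\). The empty graph also has \(Q=1\), consistent with the convention in \cref{thm:degree}. So \(Q=1\) on every forest.

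\emph{Connected graph with a single cycle.} Such a graph is obtained from its unique cycle by attaching leaves repeatedly, so by the leaf rule it suffices to treat a cycle \(C_k\). For \(k\ge2\), a loopless cycle \(C_k\) is obtained from \(C_2\) (two parallel edges) by subdivisions, so it suffices to compute \(Q(C_2)\). Its states are:
\begin{itemize}
\item the empty set, with weight \(1\);
\item a single edge, which leaves two degree-one vertices and so has weight \(0\);
\item both edges, which give both vertices degree two and so have weight \(\varphi(2)^2=1\).
\end{itemize}
Hence \(Q(C_2)=2\), so \(Q=2\) for every connected graph with a single cycle.

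There is no real obstacle here. The one point needing care is the base case: one must allow parallel edges, as the paper's multigraph convention does, or else verify \(C_3\) directly. There the states with nonzero weight are the empty set and the full triangle, giving \(Q(C_3)=2\), in agreement with \cref{ex:triangle-admissible}.
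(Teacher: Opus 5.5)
Your proposal is correct and follows the paper's proof: reduce to \(Q\) via \cref{thm:degree}, factor over components, note that a selected leaf edge and a one-sided choice at a subdivision vertex each give weight zero, and strip leaves to reach isolated vertices or the unique cycle. The only cosmetic difference is that you further reduce the cycle to \(C_2\) by subdivisions, whereas the paper simply observes that on a cycle only the empty and full edge sets contribute.
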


\begin{proof}
See \cref{supp:elementary-degree}.
\end{proof}

The theorem concerns a nonempty Zariski open set of parameters and says
nothing about special fibers; \cref{sec:strong-field} gives a separate
complete count for every fixed coupling vector with \(a_e^2\ne1\).

%% file: sections/incidence-algebra.tex
\section{The universal edge-incidence ring}
\label{sec:incidence}

The factors in the degree formula have a cohomological meaning:
\(d-1=\dim H^1(\Pp^1,\cO(-d))\) for \(d\geq2\).  We use this to
construct a canonical ring whose entire minimal free resolution is
indexed by the same edge states as the BP degree, following the
geometric method for syzygies \cite{weyman2003syzygies}: resolve an
incidence scheme by its Koszul complex and take derived global sections.
Related constructions for mixed multilinear systems appear in
\cite[Section~2.2]{bender2021mixedmultilinearv1}; here the
graph-specific input is the concentration of each state in a single
cohomological degree, and no resultant hypothesis on the number of
equations is needed.

\subsection{Edge bundles and the state cohomology}
\label{alg:edge-bundles}

Let \(G=(V,E)\) be a finite loopless multigraph.  Put
\(n=|V|\), \(m=|E|\), and let \(c=c(G)\) and \(\tau=\tau(G)\)
be the numbers of connected components and tree components, respectively.
An isolated vertex counts as a tree component.  We use
\[
 \nu=m-n+c,\qquad
 \rho=m-n+\tau,\qquad
 d_G=4m-\rho=3m+n-\tau.
\]
Thus \(\rho\) is the sum of \(\nu(C)-1\) over the components
\(C\) containing a cycle, and is nonnegative.  Set
\[
 X_G=(\Pp^1)^V,
 \qquad
 L_e=\operatorname{pr}_u^*\cO_{\Pp^1}(1)
       \otimes\operatorname{pr}_v^*\cO_{\Pp^1}(1)
       \quad(e=\{u,v\}).
\]
Parallel edges give separate summands.  To keep track of maps which
reverse an edge orientation, introduce the one-dimensional orientation
space
\[
 \operatorname{or}(e)=\bigwedge^2\C^{\{u,v\}},
 \qquad
 F_G=\bigoplus_{e\in E}L_e\otimes\operatorname{or}(e)^\vee.
\]
Choosing an orientation of each edge trivializes these orientation
spaces.  We suppress them in formulas for dimensions and retain their
natural signs when discussing graph maps.  An ordering of the edges
similarly gives the usual signs for exterior products; changing either
choice gives an isomorphic construction.

For \(S\subseteq E\), let \(d_S(v)\) be its degree at \(v\), put
\(V(S)=\{v:d_S(v)>0\}\), and let \(v(S)=|V(S)|\).
Recall the state weight
\[
 \wt(S)=\prod_{v\in V}\varphi(d_S(v)),
 \qquad
 \varphi(0)=1,\quad \varphi(d)=d-1\quad(d\geq1).
\]
We call \(S\) a contributing state if it has no vertex of degree one;
the empty state is included.  Define the bigraded vector space
\begin{equation}
 E_1^{-q,j}(G)=H^j(X_G,\bigwedge^q F_G^\vee).
 \label{alg:first-page}
\end{equation}
The notation records the first page of the hypercohomology spectral
sequence of an edge Koszul complex.  At this point no edge section has
been chosen, so this space and its product depend only on the graph.

\begin{lemma}[State cohomology]
\label{alg:state-cohomology}
The summand of \(\bigwedge^{|S|}F_G^\vee\) indexed by \(S\) has zero
cohomology if \(S\) contains a vertex of degree one.  Otherwise its
cohomology is concentrated in degree \(v(S)\), where it has dimension
\(\wt(S)\).  Consequently,
\begin{equation}
 \dim E_1^{-q,j}(G)
 =\sum_{\substack{S\subseteq E\\ |S|=q,\ v(S)=j}}\wt(S),
 \qquad
 \sum_{q,j}\dim E_1^{-q,j}(G)=Q(G).
 \label{alg:state-dimensions}
\end{equation}
Every nonzero summand satisfies \(0\leq q-j\leq\rho\), and the
upper bound is attained by a contributing state.
\end{lemma}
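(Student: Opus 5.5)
The plan is to compute the cohomology of each summand with the Künneth formula. After suppressing the orientation lines, \(\bigwedge^q F_G^\vee=\bigoplus_{|S|=q}\bigotimes_{e\in S}L_e^{-1}\). The summand indexed by \(S\) is the line bundle \(\boxtimes_{v\in V}\cO_{\Pp^1}(-d_S(v))\) on \(X_G=(\Pp^1)^V\), because each edge of \(S\) contributes \(\cO(-1)\) on both of its endpoint factors. Künneth then gives
\[
 H^\bullet\Bigl(X_G,\boxtimes_v\cO(-d_S(v))\Bigr)
 =\bigotimes_{v\in V}H^\bullet\bigl(\Pp^1,\cO(-d_S(v))\bigr).
\]

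The one-variable facts are standard. \(\cO(0)\) has cohomology \(\C\) in degree \(0\). \(\cO(-1)\) is acyclic. For \(d\geq2\), \(\cO(-d)\) has cohomology only in degree \(1\), of dimension \(d-1\). So a single vertex of degree one in \(S\) kills the whole tensor product. Otherwise each vertex of \(V(S)\) contributes \(\varphi(d_S(v))\) in degree \(1\), and each vertex outside \(V(S)\) contributes \(1=\varphi(0)\) in degree \(0\). The cohomology is therefore concentrated in degree \(v(S)\) with dimension \(\wt(S)\). Summing over \(S\) with \(|S|=q\) and \(v(S)=j\) gives \eqref{alg:state-dimensions}. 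The total is \(Q(G)\), since states with a degree-one vertex have \(\wt(S)=0\) and contribute nothing on either side.

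For the bound, fix a contributing nonempty state \(S\) and view it as a subgraph on \(V(S)\). Every vertex of this subgraph has degree at least two, so \(2|S|\geq 2v(S)\) and \(q-j=|S|-v(S)\geq0\); the empty state gives \(0\).

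For the upper bound, work component by component. Let \(C\) be a component of \(G\), and let \(S_C\) and \(V_C\) be the parts of \(S\) and \(V(S)\) lying in \(C\). A tree component contains no nonempty subgraph of minimum degree two, so \(S_C=\emptyset\) there. On a component \(C\) with a cycle, I would show \(|S_C|-|V_C|\leq\nu(C)-1\). The subgraph \((V_C,S_C)\) has cyclomatic number \(|S_C|-|V_C|+c(S_C)\leq\nu(C)\), since adding edges and vertices never decreases cyclomatic number. Hence \(|S_C|-|V_C|\leq\nu(C)-c(S_C)\leq\nu(C)-1\) when \(S_C\neq\emptyset\), and the difference is \(0\leq\nu(C)-1\) when \(S_C=\emptyset\). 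Summing over components gives \(q-j\leq\rho\).

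For attainment, I need a contributing state \(S\) that, on each cyclic component \(C\), is connected with cyclomatic number \(\nu(C)\). Take the \(2\)-core of \(C\): repeatedly delete degree-one vertices together with their edges. This removes only tree parts, so the cyclomatic number is preserved, and since \(C\) contains a cycle the core is a nonempty connected subgraph of minimum degree at least two. Let \(S\) be the union of these cores, so that \(\wt(S)\geq1\) and \(S\) is contributing. Then \(|S|-v(S)=\sum_C(\nu(C)-1)=\rho\). This attainment step, together with checking that the core stays connected, is the only point needing care; the rest is bookkeeping.
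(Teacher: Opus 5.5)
Your proposal is correct and follows the paper's proof essentially step by step: the Künneth formula with the one-variable cohomology of $\cO(-d)$, the degree count $2|S|\ge 2v(S)$, the componentwise upper bound from the cycle space of $S_C$ sitting inside that of $C$ (your monotonicity of the cyclomatic number is the same fact), and the 2-cores of the cyclic components for attainment. The paper only tracks $m_C-n_C$ under pendant deletion, which makes your separate connectedness check of the core unnecessary, but that check is valid.
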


\begin{proof}
Up to its one-dimensional orientation factor, the state summand is
\[
 \mathop{\boxtimes}_{v\in V}\cO_{\Pp^1}(-d_S(v)).
\]
On \(\Pp^1\), the bundle \(\cO\) has a one-dimensional \(H^0\),
\(\cO(-1)\) has no cohomology, and \(\cO(-d)\) for \(d\geq2\) has
only \(H^1\), of dimension \(d-1\).  The K\"unneth formula gives
the asserted concentration and dimension.  In a nonempty contributing
state, every active vertex has degree at least two.  Thus
\(2|S|\geq2v(S)\), proving \(j\leq q\).

There is no nonempty contributing state on a tree component.  On a
component \(C\) containing a cycle, write \(S_C=S\cap E(C)\).
If \(S_C\ne\varnothing\), its cycle space injects into that of
\(C\), so
\[
 |S_C|-v(S_C)=\nu(S_C)-c(S_C)\leq\nu(C)-1,
\]
where \(c(S_C)\) counts the components of its active support.
The empty state satisfies the same inequality.  Summing over components
gives \(q-j\leq\rho\).

To attain the bound, take the full 2-core of each component containing
a cycle, and no edges in the tree components.  These cores have minimum
degree at least two.  Deleting pendant vertices preserves \(m_C-n_C\),
so their union satisfies \(|S|-v(S)=\rho\) and has positive weight.
For a forest the empty state attains \(\rho=0\).
\end{proof}

Cup product and exterior multiplication make \(E_1(G)\) a bigraded
algebra with total degree \(j-q\); we shall identify it with a Tor algebra.

\subsection{The universal incidence scheme}
\label{alg:universal-incidence}

Let
\begin{equation}
 \mathbb B_G=H^0(X_G,F_G),
 \qquad S_G=\Sym(\mathbb B_G^*).
 \label{alg:coefficient-ring}
\end{equation}
Each edge contributes four independent coefficients, so \(S_G\) is
a polynomial ring in \(4m\) variables, all of degree one.  We also
write \(\mathbb B_G\) for the corresponding affine coefficient
space.  The universal section of \(F_G\) on
\(X_G\times\mathbb B_G\) has a zero scheme
\begin{equation}
 Z_G=\{(x,b):b_e(x)=0\text{ for every }e\in E\}.
 \label{alg:incidence-scheme}
\end{equation}

\begin{definition}
\label{alg:universal-definition}
The universal edge-incidence ring is
\[
 \mathscr U_G=\Gamma(Z_G,\cO_{Z_G}).
\]
Its grading comes from simultaneous scaling of the edge coefficients.
The projection \(\pi_G:Z_G\to\mathbb B_G\) gives it the structure of
a graded \(S_G\)-algebra.
\end{definition}

\begin{lemma}
\label{alg:incidence-vector-bundle}
The scheme \(Z_G\) is the total space of a vector bundle of rank
\(3m\) on \(X_G\).  The projection \(\pi_G\) is proper,
\(\mathscr U_G\) is finite as an \(S_G\)-module, and
\((\mathscr U_G)_0=\C\).
\end{lemma}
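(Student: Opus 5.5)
The plan is to exhibit \(Z_G\) directly as the kernel of an evaluation map of vector bundles on \(X_G\). Write \(\mathbb B_G=\bigoplus_e H^0(X_G,L_e)\), where the orientation lines are suppressed as in \cref{alg:edge-bundles}. Each \(H^0(X_G,L_e)=H^0(\Pp^1,\cO(1))^{\otimes 2}\) is four-dimensional, which recovers the count of \(4m\) variables. Consider the evaluation map of trivial bundle to \(F_G\),
\[
 \mathrm{ev}:X_G\times\mathbb B_G\longrightarrow F_G,\qquad (x,b)\longmapsto (b_e(x))_{e\in E}.
\]
By construction \(Z_G\) is the zero scheme of the universal section, that is, the scheme-theoretic kernel of \(\mathrm{ev}\).

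The key step is surjectivity of \(\mathrm{ev}\) on every fiber. This holds edge by edge, because \(L_e\) is globally generated: at any point \(x\), the sections \(x_u^{\epsilon}x_v^{\epsilon'}\) of \(\cO(1)\boxtimes\cO(1)\) do not all vanish. So \(\mathrm{ev}\) is a surjection from a trivial bundle of rank \(4m\) onto a bundle of rank \(m\). Its kernel \(K_G\) is therefore locally free of rank \(3m\), and \(Z_G\) is its total space. Because the zero scheme is cut out fiberwise by linear equations with constant rank, the kernel agrees scheme-theoretically with the zero scheme of the universal section.

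Properness of \(\pi_G\) follows because \(Z_G\) is a closed subscheme of \(X_G\times\mathbb B_G\), and the projection of this product to \(\mathbb B_G\) is proper since \(X_G\) is projective. Finiteness of \(\mathscr U_G\) over \(S_G\) then comes from computing global sections of the total space:
\[
 \mathscr U_G=\Gamma(X_G,\Sym K_G^\vee)=\bigoplus_{k\ge0}H^0(X_G,\Sym^k K_G^\vee),
\]
graded by \(k\), which matches scaling of the coefficients. Each graded piece is finite-dimensional because \(X_G\) is projective. For finite generation as an \(S_G\)-module, I would use coherence of \(\pi_{G*}\cO_{Z_G}\) for the proper map \(\pi_G\): its global sections over the affine space \(\mathbb B_G\) form a finitely generated \(S_G\)-module. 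Degree zero is \(H^0(X_G,\cO)=\C\), since \(X_G\) is connected and projective, which gives \((\mathscr U_G)_0=\C\).

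The only point needing care is the scheme-theoretic identification of the zero scheme with the kernel bundle, which rests on fiberwise surjectivity. This is routine once global generation is noted.
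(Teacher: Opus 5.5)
Your proof is correct and follows the paper's argument: fiberwise surjectivity of the evaluation map \(\mathbb B_G\otimes\cO_{X_G}\to F_G\) by global generation, a rank-\(3m\) kernel bundle whose total space is \(Z_G\), properness from projectivity of \(X_G\), finiteness from coherence of \((\pi_G)_*\cO_{Z_G}\), and degree zero as functions on the zero section. Your explicit formula \(\mathscr U_G=\bigoplus_k H^0(X_G,\Sym^k K_G^\vee)\) is a harmless elaboration; the paper additionally notes from the local splitting that the universal edge equations form a regular sequence, which is not part of the statement but is used right afterwards to conclude that the Koszul complex resolves \(\cO_{Z_G}\).
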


\begin{proof}
See \cref{supp:vector-bundle}.
\end{proof}

The ring \(\mathscr U_G\) need not be generated by \(S_G\); its extra
module generators will encode collections of disjoint cycles.

\subsection{The complete minimal resolution}
\label{alg:resolution-section}

Let \(S_G(-q)\) denote a free graded module whose generator has
internal degree \(q\).  The universal Koszul complex is
\begin{equation}
 \cK_G^{-q}
   =\bigwedge^q F_G^\vee\otimes S_G(-q),
 \qquad d=\iota_{s^{\mathrm{univ}}}.
 \label{alg:universal-koszul}
\end{equation}
The differential has cohomological degree one and internal degree zero.
By \cref{alg:incidence-vector-bundle}, this complex resolves
\(\cO_{Z_G}\).

\begin{theorem}[The graph resolution]
\label{alg:resolution}
The derived global sections of \(\cK_G\) are concentrated in degree
zero and equal \(\mathscr U_G\).  A minimal graded free
\(S_G\)-resolution of \(\mathscr U_G\) has length exactly \(\rho\)
and terms
\begin{equation}
 \mathscr F_p
 =\bigoplus_q H^{q-p}(X_G,\bigwedge^qF_G^\vee)\otimes S_G(-q),
 \qquad 0\leq p\leq\rho.
 \label{alg:free-terms}
\end{equation}
In particular,
\begin{equation}
 \beta_{p,q}^{S_G}(\mathscr U_G)
 =\sum_{\substack{S\subseteq E\\ |S|=q,\ |S|-v(S)=p}}\wt(S).
 \label{alg:betti-state-sum}
\end{equation}
There is a canonical isomorphism of bigraded algebras
\begin{equation}
 \Tor_p^{S_G}(\mathscr U_G,\C)_q
 \cong H^{q-p}(X_G,\bigwedge^qF_G^\vee).
 \label{alg:tor-isomorphism}
\end{equation}
On the right, multiplication is cup product followed by exterior
multiplication, with the total-complex signs.
\end{theorem}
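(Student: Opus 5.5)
This is Weyman's geometric method applied to the Koszul complex \(\cK_G\) on \(X_G\times\mathbb B_G\), where \(\mathbb B_G\) is affine and \(X_G\) is projective. The plan has three steps: compute derived global sections term by term, compare them with a free resolution via the pushforward spectral sequence, and show that resolution is minimal.

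\textbf{Step 1: derived global sections of each term.} Since \(\mathbb B_G\) is affine,
\[
R\Gamma(\cK_G^{-q})=H^\bullet(X_G,\bigwedge^qF_G^\vee)\otimes S_G(-q).
\]
By \cref{alg:state-cohomology}, each state summand \(S\) contributes only \(H^{v(S)}\), of dimension \(\wt(S)\), placed in internal degree \(q=|S|\).

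\textbf{Step 2: the hypercohomology spectral sequence.} Consider
\[
E_1^{-q,j}=H^j(X_G,\bigwedge^qF_G^\vee)\otimes S_G(-q)\Rightarrow H^{j-q}\bigl(R\Gamma(\cO_{Z_G})\bigr).
\]
By \cref{alg:incidence-vector-bundle}, \(Z_G\) is affine over \(\mathbb B_G\) only after projection, but \(Z_G\) is the total space of a vector bundle \(W\) over \(X_G\). Hence \(R\Gamma(\cO_{Z_G})=\bigoplus_k H^\bullet(X_G,\Sym^kW^\vee)\). I would show the higher cohomology vanishes: \(W\) is the kernel of the evaluation map \(\mathbb B_G\otimes\cO\to F_G\), and edgewise its dual is generated by \(\cO(1,1)\)-type pieces. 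Better, I would compute directly: for each edge, the incidence condition on \(\Pp^1\times\Pp^1\) gives \(\Sym^k\) of \(W_e^\vee\), filtered with quotients \(\cO(a,b)\) with \(a,b\ge -1\)-type twists; a Künneth argument then gives vanishing of \(H^{>0}\). This vanishing is the main obstacle. As a fallback, I would use that all nonzero \(E_1\) terms satisfy \(j\le q\) by \cref{alg:state-cohomology}, so the abutment lives in degrees \(\le 0\). Then Weyman's theorem (basic theorem 5.1.2) yields a minimal complex \(\mathscr F_\bullet\) with terms \eqref{alg:free-terms} whose homology is \(R^{-i}\pi_*\cO_{Z_G}\), and positive-degree vanishing follows from \(H^{>0}\) vanishing, i.e.\ \(\mathscr F_p\) is concentrated in \(p=q-j\ge0\).

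\textbf{Step 3: minimality, Betti numbers, and the Tor algebra.} Weyman's construction produces \(\mathscr F_\bullet\) with differentials of positive degree between distinct \(q\). The internal degree of each generator is \(q=|S|\), and components \(S_G(-q)\to S_G(-q')\) with \(q=q'\) are zero because the differential comes from \(E_r\), \(r\ge1\), which changes \(q\). Hence \(\mathscr F_\bullet\) is minimal, and \(H_0=\Gamma(\cO_{Z_G})=\mathscr U_G\). The Betti formula \eqref{alg:betti-state-sum} is then read off from \cref{alg:state-cohomology}. The length is exactly \(\rho\) by the attained bound there.

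Finally, for \eqref{alg:tor-isomorphism}, tensor \(\cK_G\) with \(\C=S_G/\mathfrak m\), i.e.\ restrict to \(b=0\). The differential \(\iota_s\) becomes zero, so
\[
\Tor^{S_G}(\mathscr U_G,\C)=H^\bullet(X_G,\bigwedge^\bullet F_G^\vee),
\]
computed via the quasi-isomorphism \(\cK_G\otimes^L\C\simeq\mathscr U_G\otimes^L\C\). Multiplicativity follows because \(\cK_G\) is a sheaf of dg algebras and \(R\Gamma\) is computed by a Čech model compatible with products. Degeneration at \(E_1\) after tensoring with \(\C\) is automatic, since the differential vanishes.
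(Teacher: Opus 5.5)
Your proposal is correct and follows the paper's argument. Your Step 2 fallback is exactly the paper's concentration proof: the bound $j\le q$ from \cref{alg:state-cohomology} puts the abutment in degrees $\le 0$, while $R\Gamma(Z_G,\cO_{Z_G})$ lives in degrees $\ge 0$. So the direct $\Sym^kW^\vee$ computation is unnecessary; done carefully it needs that same bound, since edges meeting at a vertex produce twists $\cO(-d)$ with $d\ge2$. Your Tor-algebra step, base change of the \v{C}ech model to the coefficient origin, also matches the paper. The one difference is that you cite Weyman's basic theorem for the minimal complex, whereas the paper builds it explicitly from a \v{C}ech contraction and the finite perturbation lemma, where minimality is visible because every term of the transferred differential carries coefficient degree $r\ge1$.
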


\begin{proof}
We first establish concentration, before choosing a finite free model.
Since \(\cK_G\) resolves an ordinary sheaf,
\[
 \mathrm R\Gamma(X_G\times\mathbb B_G,\cK_G)
 \simeq \mathrm R\Gamma(Z_G,\cO_{Z_G})
\]
has no negative cohomology.  On the other hand, its hypercohomology
spectral sequence starts with
\[
 E_1^{-q,j}(G)\otimes S_G(-q).
\]
Every nonzero term satisfies \(j-q\leq0\) by
\cref{alg:state-cohomology}.  The sequence is bounded, so there is no
positive cohomology either.  Its sole cohomology module is therefore
\(\Gamma(Z_G,\cO_{Z_G})=\mathscr U_G\).

To obtain the resolution itself, take the product of the standard
two-chart covers of \(\Pp^1\).  Apply its \v{C}ech complex to each
\(\bigwedge^qF_G^\vee\), initially without the Koszul differential.
As complexes of \(\C\)-vector spaces, these complexes admit
contractions onto their finite-dimensional cohomology.  Write their
direct sum as \((\iota,p,h)\), with \(h\) preserving \(q\),
and choose the convention
\[
 \iota p=1+d_{\mathrm{Cech}}h+hd_{\mathrm{Cech}}.
\]
Extend the contraction over \(S_G\).  Let \(\delta\) be the Koszul
differential, including the totalization sign.  It lowers \(q\) by
one and multiplies by a coefficient of degree one.  Thus \(\delta h\)
is nilpotent: no composition can lower \(q\) more than \(m\) times.
The finite perturbation lemma
\cite[Section~2.2 and Main Perturbation Lemma~2.4]{crainic2004perturbationv1}
transfers the differential to
\begin{equation}
 d_{\mathscr F}
 =p(1-\delta h)^{-1}\delta\iota
 =\sum_{r\geq1}p(\delta h)^{r-1}\delta\iota.
 \label{alg:transferred-differential}
\end{equation}
This is a finite polynomial sum, not a completed power series.  It
defines a square-zero differential and a homotopy equivalence with the
original total complex.  A term with \(r\) copies of \(\delta\)
lowers the edge number by \(r\), lowers the \v{C}ech degree by
\(r-1\), and has coefficient degree \(r\).  Consequently it maps
the term in homological degree \(p\) to the one in degree \(p-1\)
with internal degree preserved.  The underlying modules are precisely
\eqref{alg:free-terms}; their range follows from
\cref{alg:state-cohomology}.

Concentration of the original derived global sections proves exactness
of this finite complex in positive homological degrees.  Every entry
of \(d_{\mathscr F}\) lies in the homogeneous maximal ideal
\((S_G)_+\).  The resolution is therefore minimal, proving
\eqref{alg:betti-state-sum}.  The state attaining \(\rho\) in
\cref{alg:state-cohomology} makes \(\mathscr F_\rho\) nonzero, so
the projective dimension is exactly \(\rho\).
Although the chosen contraction affects
the displayed matrices, it does not affect the minimal resolution up
to graded isomorphism.

For the algebra statement, use the universal Koszul complex with its
unit and exterior product.  Derived base change to the coefficient
origin gives
\begin{equation}
 \mathscr U_G\otimes^{\mathbf L}_{S_G}\C
 \simeq
 \mathrm R\Gamma\!\left(
 X_G,\bigoplus_q\bigwedge^qF_G^\vee[q]\right).
 \label{alg:zero-base-change}
\end{equation}
Indeed, the Koszul differential becomes zero at this origin.  The
bounded \v{C}ech model is flat over \(S_G\), so it also computes
this derived tensor product, and the base-change map of \v{C}ech
complexes is a map of differential graded algebras for the Koszul
products, so the displayed equivalence respects the derived products.
Equivalently, the projection \(p\colon X_G\times\mathbb B_G\to\mathbb B_G\)
is proper, flat and of finite presentation, and \(\cK_G\) is a perfect
complex, being a bounded complex of locally free sheaves of finite
rank; hence \(\mathrm Rp_*\cK_G\) is perfect and its formation commutes
with the base change to the origin
\cite[Lemma~36.30.4, Tag~0B91]{stacks2026cohomology}, and this
base-change map is induced by the same map of \v{C}ech complexes.
Taking cohomology in degree \(-p\) and internal degree \(q\) proves
\eqref{alg:tor-isomorphism} with its product.  This identification
uses the original Koszul algebra and base change, rather than a choice
of matrices in \eqref{alg:transferred-differential}, and is canonical.
\end{proof}

\begin{corollary}[The BP degree as a total Betti number]
\label{alg:total-betti}
For every finite loopless multigraph \(G\),
\begin{equation}
 N(G)=Q(G)=\sum_{p,q}\beta_{p,q}^{S_G}(\mathscr U_G).
 \label{alg:bp-total-betti}
\end{equation}
\end{corollary}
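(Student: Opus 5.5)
This corollary is a direct assembly of two results already proved. We only need to chain them.

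First, \cref{thm:degree} gives \(N(G)=Q(G)\) for every finite loopless multigraph, under the factorization and admissibility conventions of \cref{sec:degree}.

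Second, we sum the Betti numbers. By \cref{alg:resolution}, the graded Betti numbers are given by \eqref{alg:betti-state-sum}: \(\beta_{p,q}^{S_G}(\mathscr U_G)\) is the sum of \(\wt(S)\) over edge sets \(S\) with \(|S|=q\) and \(|S|-v(S)=p\). Summing over all pairs \((p,q)\), each edge subset \(S\subseteq E\) is counted exactly once, namely in the bin \((p,q)=(|S|-v(S),|S|)\). Hence
\[
 \sum_{p,q}\beta_{p,q}^{S_G}(\mathscr U_G)=\sum_{S\subseteq E}\wt(S)=Q(G).
\]
Two points need a brief check here.

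\begin{itemize}
\item States with a vertex of degree one have \(\wt(S)=0\), since \(\varphi(1)=0\). They therefore contribute nothing, consistent with the vanishing in \cref{alg:state-cohomology}.
\item Every contributing state satisfies \(0\le |S|-v(S)\le\rho\) by \cref{alg:state-cohomology}. So no contributing state falls outside the range \(0\le p\le\rho\) of the resolution in \eqref{alg:free-terms}.
\end{itemize}

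Equivalently, we could cite the total-dimension identity \eqref{alg:state-dimensions} together with the identification \eqref{alg:free-terms}. The reindexing \(j=q-p\) is a bijection between the bigradings, so it does not change the total.

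Combining the two equalities gives \eqref{alg:bp-total-betti}. There is no real obstacle: all the substance lies in \cref{thm:degree} and in the concentration argument of \cref{alg:resolution}. The only care needed is the bookkeeping that the index sets in \eqref{alg:betti-state-sum} partition the set of all edge subsets.
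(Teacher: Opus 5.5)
Your proposal is correct and matches the paper's proof: sum \eqref{alg:betti-state-sum} over all bidegrees so that each contributing state is counted once with weight \(\wt(S)\), giving \(Q(G)\), then cite \cref{thm:degree} for \(N(G)=Q(G)\). Your bookkeeping checks are accurate: states with a degree-one vertex have weight zero, even when \(|S|-v(S)<0\), and contributing states lie in the range \(0\le p\le\rho\).
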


\begin{proof}
Summing \eqref{alg:betti-state-sum} counts each contributing state
once, with weight \(\wt(S)\).  The equality with the generic admissible
BP degree is \cref{thm:degree}.
\end{proof}

\subsection{Normality and Cohen--Macaulayness}
\label{alg:cm-section}

\begin{proposition}
\label{alg:normal-cm}
For every finite loopless multigraph \(G\), the ring
\(\mathscr U_G\) is a normal Cohen--Macaulay domain of dimension
\(d_G=3m+n-\tau\).
\end{proposition}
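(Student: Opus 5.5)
The plan is to obtain every property from the geometry of \(Z_G\) and the resolution of \cref{alg:resolution}.

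\emph{Domain and normality.} By \cref{alg:incidence-vector-bundle}, \(Z_G\) is the total space of a rank-\(3m\) vector bundle over the smooth connected variety \(X_G\). Hence \(Z_G\) is smooth and irreducible of dimension \(3m+n\), and \(\Gamma(Z_G,\cO_{Z_G})\) is a domain. Normality comes from Hartogs-type reasoning. For a normal integral scheme, the ring of global sections is the intersection of the local rings \(\cO_{Z_G,z}\) inside the function field, and each of these local rings is integrally closed. So \(\mathscr U_G\) is integrally closed in \(\Frac(\mathscr U_G)\); it is noetherian because it is finite over \(S_G\).

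\emph{Dimension.} I will use the Stein factorization of the proper map \(\pi_G\). Its image is \(\operatorname{Spec}\) of the image of \(S_G\to\mathscr U_G\), and \(\mathscr U_G\) is finite over that image. So \(\dim\mathscr U_G\) equals the dimension of \(\pi_G(Z_G)\), which is \(3m+n\) minus the generic fiber dimension of \(\pi_G\). The fiber over a coefficient point \(b\) is the common zero set of the \(m\) bidegree-\((1,1)\) forms in \(X_G\). The claim to prove is that for general \(b\) this fiber has dimension exactly \(\tau\).
\begin{itemize}
\item On a tree component \(T\) with \(n_T\) vertices and \(n_T-1\) edges, fix the coordinate at one vertex. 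Each edge form is a bilinear form, so for general coefficients it determines the neighbouring coordinate uniquely (it is a Möbius map). Propagating through the tree gives a fiber isomorphic to \(\Pp^1\); an isolated vertex also gives \(\Pp^1\).
\item On a component containing a cycle, propagating Möbius maps from a root produces, for each independent cycle, a fixed-point condition for a general Möbius transformation. This has finitely many solutions, so the fiber is finite. A dimension count confirms this: \(m_C\ge n_C\) general divisors in \(n_C\)-dimensional space.
\end{itemize}
The generic fiber therefore has dimension \(\tau\), and \(\dim\mathscr U_G=3m+n-\tau=d_G\). As a consistency check, the Hilbert-series denominators in \cref{thm:structure} fit this value.

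\emph{Cohen--Macaulayness.} This follows from Auslander--Buchsbaum over the graded polynomial ring \(S_G\). By \cref{alg:resolution} the projective dimension is \(\rho=m-n+\tau\), so
\[
\depth\mathscr U_G=4m-\rho=3m+n-\tau=d_G=\dim\mathscr U_G .
\]
Depth over \(S_G\) equals depth as a ring, because the extension is finite and the homogeneous maximal ideals correspond. So \(\mathscr U_G\) is Cohen--Macaulay.

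The main obstacle is the exact generic fiber dimension, specifically the finiteness claim on components with cycles. I would prove this by exhibiting one explicit coefficient point with finite fiber on such a component, for instance by choosing edge forms so that the propagated Möbius maps are in general position, and then invoking upper semicontinuity of fiber dimension. The existence of the tree \(\Pp^1\)'s then pins the dimension at exactly \(\tau\).
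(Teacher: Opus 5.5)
\textbf{There is a gap in your dimension step.} Your domain and normality argument is the paper's. Deducing Cohen--Macaulayness from Auslander--Buchsbaum is fine once the dimension is known. The problem is how you get the dimension.

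You claim that the fiber of $\pi_G$ over a general $b\in\mathbb B_G$ has dimension exactly $\tau$. This is false whenever some component $C$ has $\nu(C)\ge2$, for instance for the running example $K_4$. In that case $\dim Z_C=3m_C+n_C=4m_C-(\nu(C)-1)<\dim\mathbb B_C$. So the proper map $\pi_C$ is not dominant, and the fiber over a general $b$ is empty.

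In your M\"obius picture, each of the $\nu(C)$ non-tree edges imposes its own quadratic condition on the root coordinate. For general coefficients, two such conditions have no common zero. Your own count of $m_C>n_C$ general divisors in dimension $n_C$ says the same thing.

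The formula $\dim\pi_G(Z_G)=\dim Z_G-(\text{generic fiber dimension})$ needs the fiber through a general point of $Z_G$, not the fiber over a general point of $\mathbb B_G$; empty fibers carry no information. For the same reason, your proposed repair (one coefficient point with finite fiber, plus upper semicontinuity) works only if that point has a nonempty finite fiber, because semicontinuity of fiber dimension is a statement on $Z_G$.

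The repair can be completed, for example with a spanning unicyclic subgraph $C'\subseteq C$:
\begin{itemize}
\item the restriction $Z_C\to Z_{C'}$ is surjective;
\item $\pi_{C'}$ is dominant with a two-point general fiber;
\item $\pi_C^{-1}(b)\subseteq\pi_{C'}^{-1}(b|_{C'})$.
\end{itemize}
Together these show that the fiber through a general point of $Z_C$ is finite.

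\textbf{The paper avoids this lower bound on dimension altogether.} It uses only two easy upper bounds:
\begin{itemize}
\item $\dim\mathscr U_C\le\dim Z_C=3m_C+n_C$ on components containing a cycle;
\item $\dim\mathscr U_T=4m_T=3m_T+n_T-1$ on tree components, where the resolution gives $\mathscr U_T=S_T$.
\end{itemize}
After the tensor decomposition over components, these give $\dim\mathscr U_G\le d_G$. The resolution then gives $\depth\mathscr U_G\ge 4m-\rho=d_G$. Since $\depth\le\dim$, equality is forced, and you get the dimension and the Cohen--Macaulay property at once. You already have the Auslander--Buchsbaum computation, so replacing your fiber analysis by this squeeze is the simplest fix.
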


\begin{proof}
The variety \(X_G\) is smooth and irreducible, so the vector bundle
\(Z_G\to X_G\) is smooth and irreducible as well.  In particular,
its ring of global functions is a domain.  To see that it is integrally
closed, take an element of \(\Frac(\mathscr U_G)\) integral over
\(\mathscr U_G\).  Regard it as a rational function on \(Z_G\).
Its monic equation also makes it integral over every local ring of
\(Z_G\).  These local rings are normal, so the function is regular
everywhere and belongs to \(\Gamma(Z_G,\cO_{Z_G})\).

For a tree component \(T\), the resolution has only its empty-state
term, and hence \(\mathscr U_T=S_T\).  For a component \(C\)
containing a cycle, module finiteness identifies the support with the
proper image \(\pi_C(Z_C)\), giving
\(\dim\mathscr U_C\leq\dim Z_C=3m_C+n_C\).
The incidence scheme of a disjoint union is the product of the
component incidence schemes, and global sections over \(\C\) give
the corresponding tensor product.  Consequently,
\[
 \dim\mathscr U_G
 \leq\sum_{C\text{ cyclic}}(3m_C+n_C)
       +\sum_{T\text{ tree}}4m_T
 =3m+n-\tau=d_G.
\]
Here ``cyclic'' means containing at least one cycle.

By \cref{alg:resolution}, the projective dimension over \(S_G\)
is \(\rho\).  At the homogeneous maximal ideal, the depth
inequality for a finite free resolution gives
\[
 \depth_{(S_G)_+}\mathscr U_G\geq4m-\rho=d_G.
\]
Since depth is bounded by dimension, equality holds throughout.
Thus the prime ideal \(I=\ker(S_G\to\mathscr U_G)\) has height
\(\rho\).

Cohen--Macaulayness at every prime follows, since \(\mathscr U_G\) is a
finitely generated graded module over a polynomial ring; a direct
localization of the resolution is recorded in \cref{supp:cm-localization}.
\end{proof}

\subsection{Graph maps and the Tor product}
\label{alg:functoriality-section}

We make the category of graphs explicit.  A noncontracting graph map
\(\phi:G\to H\) consists of maps on vertices and edges taking the
two distinct endpoints of each edge bijectively onto the endpoints of
its image edge.  Different vertices or edges may have the same image,
but an edge cannot be sent to a vertex.  The induced coordinate map
\[
 a_\phi:X_H\longrightarrow X_G,
 \qquad (x_w)_{w\in V(H)}\longmapsto(x_{\phi(v)})_{v\in V(G)}
\]
goes in the opposite direction.  The endpoint bijections also identify
the orientation lines.  In chosen orientations their signs are
\(\epsilon_\phi(e)\in\{1,-1\}\).

\begin{proposition}[Functoriality]
\label{alg:functoriality}
Every noncontracting graph map induces a canonical graded algebra map
\[
 \mathscr U_\phi:\mathscr U_G\longrightarrow\mathscr U_H,
\]
compatible with the induced coefficient-ring map \(S_G\to S_H\).
These maps preserve identities and composition, and
\[
 \mathscr U_{G\sqcup H}
 \cong\mathscr U_G\otimes_{\C}\mathscr U_H.
\]
The Tor isomorphism \eqref{alg:tor-isomorphism} is natural for these
maps, with changing coefficient rings.
\end{proposition}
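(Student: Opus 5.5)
The plan is to realize each ingredient of the construction functorially and then check compatibility.

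Let \(\phi:G\to H\) be noncontracting and \(a_\phi:X_H\to X_G\) the induced coordinate map. Each edge \(e=\{u,v\}\) of \(G\) maps to \(\phi(e)=\{\phi(u),\phi(v)\}\) with the endpoints matched bijectively. Hence
\[
 a_\phi^*L_e=\operatorname{pr}_{\phi(u)}^*\cO(1)\otimes\operatorname{pr}_{\phi(v)}^*\cO(1)=L_{\phi(e)}.
\]
The endpoint bijection also identifies \(\operatorname{or}(e)\) with \(\operatorname{or}(\phi(e))\), with sign \(\epsilon_\phi(e)\) in chosen orientations.

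The pullback \(a_\phi^*F_G\) is therefore canonically \(\bigoplus_{e\in E(G)}L_{\phi(e)}\otimes\operatorname{or}(\phi(e))^\vee\). Summing the components of each fiber of \(\phi\) on edges gives a canonical bundle map \(\sigma_\phi:F_H\to a_\phi^*F_G\) by the diagonal: a section \((b_f)_f\) goes to \((b_{\phi(e)})_e\). On global sections this yields a linear map \(\mathbb B_H\to\mathbb B_G\), \(b\mapsto a_\phi^*\)-compatible restriction \(\phi^\sharp(b)=(b_{\phi(e)}\circ\text{identification})_e\). Here I use \(H^0(X_G,L_e)\to H^0(X_H,a_\phi^*L_e)=H^0(X_H,L_{\phi(e)})\), which is an isomorphism because both are \(\C^2\otimes\C^2\) on the same pair of factors. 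So the natural direction is \(\mathbb B_H\to\mathbb B_G\), whose dual \(\mathbb B_G^*\to\mathbb B_H^*\) gives the coefficient ring map \(S_G\to S_H\).

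Define \(\Phi:X_H\times\mathbb B_H\to X_G\times\mathbb B_G\) by \((x,b)\mapsto(a_\phi(x),\phi^\sharp b)\). The key check is that the universal section pulls back correctly. Under \(\Phi^*F_G=a_\phi^*F_G\) we have
\[
 \Phi^*s^{\mathrm{univ}}_G=\sigma_\phi(s^{\mathrm{univ}}_H),
\]
because the \(e\)th component at \((x,b)\) is \(b_{\phi(e)}(x)\). Consequently \(\Phi(Z_H)\subseteq Z_G\), and as schemes \(\Phi^{-1}(Z_G)\) is cut out by the \(b_{\phi(e)}\), which lie in the ideal of \(Z_H\). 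So \(\Phi\) restricts to \(Z_H\to Z_G\), and \(\mathscr U_\phi=\Phi^*:\Gamma(Z_G,\cO)\to\Gamma(Z_H,\cO)\) is a graded algebra map, since \(\phi^\sharp\) is linear and hence respects scaling. It is compatible with \(S_G\to S_H\) because the projections commute with \(\Phi\). Identities and composition hold because \(a_{\psi\phi}=a_\phi a_\psi\) and \((\psi\phi)^\sharp=\phi^\sharp\psi^\sharp\), which are evident from the formulas; the orientation signs compose multiplicatively. The disjoint-union statement is the product decomposition \(Z_{G\sqcup H}=Z_G\times Z_H\), already used in \cref{alg:normal-cm}, together with the Künneth formula for global functions over \(\C\).

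For naturality of the Tor isomorphism, \(\sigma_\phi\) dualizes to \(a_\phi^*F_G^\vee\to F_H^\vee\) and hence to a map of Koszul algebras
\[
 \Phi^*\cK_G\otimes_{S_G}S_H\longrightarrow\cK_H .
\]
By the identity \(\Phi^*s^{\mathrm{univ}}_G=\sigma_\phi(s^{\mathrm{univ}}_H)\) this map commutes with the contraction differentials, and it is multiplicative on exterior algebras. On the right side of \eqref{alg:tor-isomorphism} the induced map is pullback in cohomology along \(a_\phi\) composed with \(\bigwedge\sigma_\phi^\vee\); it is compatible with cup and exterior products.

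I expect the main obstacle to be compatibility with the derived base change \eqref{alg:zero-base-change} when the coefficient ring changes. The Tor side must be read as \(\Tor^{S_G}(\mathscr U_G,\C)\to\Tor^{S_H}(\mathscr U_H,\C)\), induced by \(S_G\to S_H\) sending \((S_G)_+\) into \((S_H)_+\). I will handle this at the level of \v{C}ech models. The product two-chart cover of \(X_G\) pulls back under \(a_\phi\) to a cover refined by that of \(X_H\), because coordinates are merely duplicated. This gives a map of \v{C}ech–Koszul dg algebras over \(S_G\to S_H\), and tensoring with \(\C\) at both origins commutes with it. Since the \v{C}ech models are flat and compute the derived tensor products, as in the proof of \cref{alg:resolution}, the square of isomorphisms commutes on cohomology. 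Finally, two different orientation and ordering choices change both sides by the same signs \(\epsilon_\phi\), so the naturality is independent of choices.
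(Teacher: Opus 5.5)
Your proposal is correct and follows the paper's proof: the coefficient-copy map \(\mathbb B_H\to\mathbb B_G\) with the orientation signs \(\epsilon_\phi(e)\), the pair \((a_\phi,\beta_\phi)\) carrying \(Z_H\) into \(Z_G\), pullback of regular functions, functoriality from composing coordinate and coefficient maps, the product decomposition with K\"unneth for disjoint unions, and the induced map of Koszul algebras followed by specialization to the coefficient origins. The only difference is cosmetic: the paper passes through the derived map \(\mathscr U_G\otimes^{\mathbf L}_{S_G}S_H\to\mathscr U_H\), while you realize it explicitly by a \v{C}ech refinement map. Making that map multiplicative at the cochain level needs ordered cochains or a compatible ordering of the cover, but multiplicativity on cohomology is unaffected.
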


\begin{proof}
See \cref{supp:functoriality}.
\end{proof}

The multiplication already has a concrete graph interpretation in
homological degree zero.  In the next statement a simple cycle may
have length two, using two distinct parallel edges.

\begin{proposition}[The cycle-compatibility algebra]
\label{alg:cycle-product}
There is a graded algebra isomorphism
\begin{equation}
 \mathscr U_G/(S_G)_+\mathscr U_G
 \cong
 \C[z_C:C\text{ a simple cycle of }G]
 \big/
 (z_Cz_D:V(C)\cap V(D)\ne\varnothing),
 \qquad \deg z_C=|C|.
 \label{alg:cycle-algebra}
\end{equation}
The relations include \(z_C^2=0\).
\end{proposition}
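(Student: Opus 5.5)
We identify \(\mathscr U_G/(S_G)_+\mathscr U_G\) with \(\Tor_0^{S_G}(\mathscr U_G,\C)\) and compute it through the isomorphism \eqref{alg:tor-isomorphism} of \cref{alg:resolution}. Since that isomorphism respects products, this gives an algebra isomorphism
\[
 \mathscr U_G/(S_G)_+\mathscr U_G\cong\bigoplus_q H^{q}(X_G,\textstyle\bigwedge^qF_G^\vee),
\]
with cup product followed by exterior multiplication. By \cref{alg:state-cohomology}, the \(p=0\) part consists of the contributing states \(S\) with \(|S|=v(S)\). Every active vertex of such a state has degree at least two, so \(2|S|=\sum_v d_S(v)\geq 2v(S)\). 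Equality forces every active vertex to have degree exactly two. Hence \(S\) is a vertex-disjoint union of simple cycles, where a pair of parallel edges counts as a cycle of length two, and \(\wt(S)=1\). Each such state therefore contributes a line spanned by \(\bigotimes_{v\in V(S)}H^1(\Pp^1,\cO(-2))\otimes\bigotimes_{v\notin V(S)}H^0(\cO)\), twisted by orientation lines. For each simple cycle \(C\) we choose a generator \(z_C\) of this line in degree \(|C|\).

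Next we compute products. Take states \(S\) and \(T\) of this kind. If \(S\cap T\neq\varnothing\), the exterior product already vanishes. If \(S\) and \(T\) are edge-disjoint but share a vertex \(v\), the cup product at \(v\) is a product \(H^1\otimes H^1\to H^2(\Pp^1,\cO(-d_S(v)-d_T(v)))=0\), so the product is zero. If \(S\) and \(T\) are vertex-disjoint, the Künneth formula gives a factorwise product. At each vertex one factor is \(H^0(\cO)\), acting as the identity, so the product is a nonzero generator of the line for \(S\cup T\), up to a sign from the orientations and edge ordering. Choosing the \(z_C\) consistently, the product of disjoint cycles gives a basis element \(z_S=\prod_{C\subseteq S}z_C\), well defined up to sign. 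The signs cause no trouble: the algebra is graded-commutative with total degree \(j-q=0\), so every element sits in even total degree and the algebra is commutative.

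Finally we build the isomorphism. Define a map from \(\C[z_C]\) sending \(z_C\) to its generator. The relations \(z_Cz_D\) with \(V(C)\cap V(D)\neq\varnothing\), including \(z_C^2\), map to zero by the previous step, so the map factors through the quotient. The quotient has as \(\C\)-basis the square-free monomials in pairwise vertex-disjoint cycles. Indeed, any monomial containing two vertex-meeting cycles, or a repeated cycle, lies in the ideal, and the ideal is monomial, so the remaining monomials are independent. These basis monomials correspond bijectively to the degree-two states above, and they are sent to nonzero multiples of the basis vectors \(z_S\). The map is therefore bijective and preserves degree \(q=|S|\).

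The main obstacle is the product computation. We must verify that the product of the Koszul or \v{C}ech model at the origin really is the Künneth cup product with exterior multiplication, nonzero in the disjoint case, and that the sign conventions do not destroy commutativity. Both points should follow from the degree-\(0\) evenness and the explicit Künneth decomposition.
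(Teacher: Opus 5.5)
Your proposal is correct and follows essentially the same route as the paper. You identify the quotient with \(\Tor_0^{S_G}(\mathscr U_G,\C)\) through the multiplicative isomorphism \eqref{alg:tor-isomorphism}, show that the homological-degree-zero states are exactly the collections of vertex-disjoint simple cycles (each of weight one), and compute products in three cases: exterior vanishing on shared edges, vanishing of \(H^2\) on \(\Pp^1\) at shared vertices, and K\"unneth for vertex-disjoint cycles. Your explicit monomial-basis argument for bijectivity just spells out the paper's remark that the displayed monomial relations generate the entire kernel.
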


\begin{proof}
See \cref{supp:cycle-product}.
\end{proof}

\begin{corollary}[Module generators and real sectors]
\label{alg:generator-real-bridge}
For every finite loopless multigraph \(G\),
\[
 \sum_q\beta_{0,q}^{S_G}(\mathscr U_G)
 =\#\{\text{collections of vertex-disjoint simple cycles in }G\}
 \leq2^{\nu(G)},
\]
where the empty collection is included.  Equality holds if and only if
no two edge-disjoint simple cycles meet at a vertex.  In particular,
it holds when the maximum degree is at most three.  Whenever equality
holds, this number of module generators equals the minimum number of
real admissible BP fixed points over external fields, for any fixed
finite nonzero real couplings, as computed in \cref{thm:real}.
\end{corollary}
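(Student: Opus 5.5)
Two inputs do all the work: \cref{alg:cycle-product}, which gives the minimal generators, and \cref{thm:real}, which gives the real minimum. The remaining content is combinatorics of the binary cycle space.

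\emph{Step 1: count the generators.} By \cref{alg:resolution}, \(\sum_q\beta_{0,q}\) is the \(\C\)-dimension of \(\mathscr U_G/(S_G)_+\mathscr U_G\). By \cref{alg:cycle-product} this is the algebra \(\C[z_C]/(z_Cz_D : V(C)\cap V(D)\ne\varnothing)\). That algebra is a Stanley--Reisner-type quotient: all \(z_C^2\) vanish, and the relations are monomial. So a \(\C\)-basis consists of the squarefree monomials \(\prod_{C\in\mathcal C}z_C\) with \(\mathcal C\) a collection of pairwise vertex-disjoint simple cycles, the empty collection included.

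As a cross-check, \eqref{alg:betti-state-sum} with \(p=0\) counts states with \(|S|=v(S)\) and no degree-one vertex. Such states are 2-regular on their support, hence disjoint unions of cycles, each of weight \(1\). This agrees with the count from \cref{alg:cycle-product}.

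\emph{Step 2: the inequality.} Send a collection \(\mathcal C\) to the edge set \(\bigcup\mathcal C\), an element of the binary cycle space. This map is injective: the union is a 2-regular subgraph, and its connected components recover the cycles. The cycle space has \(2^{\nu(G)}\) elements, which gives the bound.

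\emph{Step 3: the equality criterion.} Equality holds exactly when every Eulerian edge set (every vertex of even degree) is a disjoint union of vertex-disjoint simple cycles, i.e.\ is 2-regular on its support.

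Suppose two edge-disjoint simple cycles \(C,D\) share a vertex. Then \(C\triangle D=C\cup D\) is Eulerian but has a vertex of degree \(4\), so it is not in the image, and equality fails.

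Conversely, suppose some Eulerian \(S\) has a vertex \(v\) of degree \(\ge4\). Decompose \(S\) into edge-disjoint simple cycles, which is possible since \(S\) is Eulerian. At least two of these cycles pass through \(v\), which produces two edge-disjoint simple cycles meeting at a vertex.

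When the maximum degree is at most three, an Eulerian set has all degrees in \(\{0,2\}\), so equality holds automatically. Parallel edges need some care: a 2-cycle on two parallel edges is a simple cycle in the paper's convention, and the arguments above go through unchanged.

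\emph{Step 4: comparison with the real count.} Whenever equality holds, \eqref{intro:minimum} of \cref{thm:real} gives \(\min_h R_G(J,h)=2^{\nu(G)}\) for every fixed finite \(J\) with \(J_e\ne0\). The two counts therefore coincide.

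The only delicate point is the monomial-basis claim in Step 1. It rests on the presentation in \cref{alg:cycle-product} being the full ideal of relations, which that proposition asserts. So the main obstacle is the combinatorial equivalence in Step 3, which is routine.
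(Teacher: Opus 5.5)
Your proposal is correct and follows the paper's own argument. The generators are read off from \cref{alg:cycle-product} as collections of vertex-disjoint cycles, these embed into the binary cycle space of size \(2^{\nu(G)}\), equality is decided by the degree-four union of two edge-disjoint cycles meeting at a vertex together with a decomposition of an Eulerian support into edge-disjoint cycles, and the real count comes from \cref{thm:real}; your monomial-basis and injectivity remarks only spell out steps the paper leaves implicit.
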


\begin{proof}
See \cref{supp:generator-bridge}.
\end{proof}

The condition fails as soon as two edge-disjoint cycles share a vertex:
\begin{center}\footnotesize
\begin{tabular}{@{}lccc@{}}
\toprule
Graph & \(Q(G)\) & \(\sum_q\beta_{0,q}^{S_G}(\U_G)\) & \(2^{\nu(G)}\)\\
\midrule
Triangle & 2 & 2 & 2\\
\(K_4\) & 48 & 8 & 8\\
Two triangles sharing one vertex & 6 & 3 & 4\\
\bottomrule
\end{tabular}
\end{center}
For the last graph the union of the two triangles is Eulerian with a
vertex of degree four; it has weight \(3\), lies in homological degree
one, and contributes one real sector but no module generator.

\subsection{The graph polynomials and Artin reductions}
\label{alg:polynomials-section}

We use the graph polynomials of Watanabe and Fukumizu
\cite{watanabe2011graphpolynomials}.
With
\[
 f_0(\gamma)=1,\quad f_1(\gamma)=0,\quad
 f_{d+1}(\gamma)=\gamma f_d(\gamma)+f_{d-1}(\gamma),
\]
their two-variable polynomial is
\[
 \theta_G(t,\gamma)
 =\sum_{S\subseteq E}t^{|S|}\prod_{v\in V}f_{d_S(v)}(\gamma).
\]
For any finite graph, their normalization defines the polynomial
\begin{equation}
 \omega_G(t)=(1-t)^{n-m}\theta_G(t,2\mathrm i),
 \qquad \mathrm i^2=-1.
 \label{alg:omega-definition}
\end{equation}
We retain this normalization when tree components are present.
Both \(\theta_G\) and the factor \((1-t)^{n-m}\) are multiplicative
over connected components, hence so is \(\omega_G\); a tree \(T\) has
\(\theta_T=1\) and \(\omega_T=1-t\).  Therefore
\begin{equation}
 \omega_G(t)=(1-t)^{\tau}\prod_{C}\omega_C(t),
 \qquad
 \romega_G(t):=\frac{\omega_G(t)}{(1-t)^{\tau}}=\prod_{C}\omega_C(t),
 \label{alg:omega-reduced}
\end{equation}
where \(C\) runs over the components containing a cycle.  Thus
\(\romega_G\) is a polynomial, the tree-reduced form of \(\omega_G\),
and \(\romega_G=\omega_G\) when \(\tau=0\).

\begin{proposition}[Three invariants of one resolution]
\label{alg:hilbert}
For every finite loopless multigraph \(G\),
\begin{align}
 \sum_{p,q}(-1)^p\beta_{p,q}^{S_G}(\mathscr U_G)t^q
   &=\theta_G(t,2\mathrm i),
   \label{alg:theta-numerator}\\
 \Hilb_{\mathscr U_G}(t)
   &=\frac{\theta_G(t,2\mathrm i)}{(1-t)^{4m}}
     =\frac{\omega_G(t)}{(1-t)^{3m+n}}.
   \label{alg:universal-hilbert}
\end{align}
For a general sequence of \(d_G=3m+n-\tau\) linear forms from \((S_G)_1\),
the quotient \(A_G\) is a finite-dimensional graded algebra with
\begin{equation}
 \Hilb_{A_G}(t)=\romega_G(t)=\frac{\omega_G(t)}{(1-t)^\tau},
 \qquad \dim_{\C}A_G=\romega_G(1).
 \label{alg:artin-hilbert}
\end{equation}
These linear forms are a regular sequence on \(\mathscr U_G\).
In particular, \(\omega_G\) itself is the Artin Hilbert series when
\(G\) has no tree component.
\end{proposition}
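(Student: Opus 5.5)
The plan is to read everything off the graded Betti numbers in \cref{alg:resolution}. Since the minimal free resolution has terms \(\mathscr F_p=\bigoplus_q S_G(-q)^{\beta_{p,q}}\) with \(\beta_{p,q}\) given by \eqref{alg:betti-state-sum}, the Hilbert series is \(\sum_{p,q}(-1)^p\beta_{p,q}t^q/(1-t)^{4m}\). This reduces \eqref{alg:universal-hilbert} to \eqref{alg:theta-numerator}, together with the definition \eqref{alg:omega-definition}, because \(4m-(m-n)=3m+n\).

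For \eqref{alg:theta-numerator}, the alternating numerator is \(\sum_S(-1)^{|S|-v(S)}\wt(S)\,t^{|S|}\), summed over contributing states. Comparing with \(\theta_G(t,2\mathrm i)\) term by term in \(S\), it suffices to prove the local identity \(f_d(2\mathrm i)=(-1)^{d-1}(d-1)\) for \(d\ge1\), with \(f_0=1\). The factor \((-1)^{|S|-v(S)}\) then splits as \(\prod_{v\in V(S)}(-1)^{d_S(v)-1}\) up to the global \((-1)^{|S|}\) coming from \(2|S|=\sum d_S(v)\). More carefully, \(\prod_v(-1)^{d_S(v)-1}=(-1)^{2|S|-v(S)}=(-1)^{v(S)}\), and \((-1)^{|S|-v(S)}\) differs from this by \((-1)^{|S|}\). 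So I expect the identity needed is \(f_d(2\mathrm i)=\mathrm i^{d}\cdot(\pm)(d-1)\), with the powers of \(\mathrm i\) combining via \(\prod_v \mathrm i^{d_S(v)}=\mathrm i^{2|S|}=(-1)^{|S|}\).

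I would verify this by solving the recurrence. With \(\gamma=2\mathrm i\), the characteristic equation \(x^2-2\mathrm i x-1=(x-\mathrm i)^2\) has a double root \(\mathrm i\), so \(f_d=\mathrm i^d(\alpha+\beta d)\). The initial values \(f_0=1\) and \(f_1=0\) give \(f_d=\mathrm i^d(1-d)\). Hence
\[
\prod_v f_{d_S(v)}(2\mathrm i)=(-1)^{|S|}\prod_v\varphi(d_S(v))\,(-1)^{v(S)},
\]
which equals \((-1)^{|S|-v(S)}\wt(S)\). It also vanishes exactly when \(S\) has a degree-one vertex. This gives \eqref{alg:theta-numerator}.

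For the Artin reduction, I would use \cref{alg:normal-cm}: \(\mathscr U_G\) is Cohen--Macaulay of dimension \(d_G\) and finite over \(S_G\). By graded prime avoidance over the infinite field \(\C\), a general sequence of \(d_G\) linear forms is a homogeneous system of parameters. This uses that \(\mathscr U_G/(S_G)_+\mathscr U_G\) is finite-dimensional, so the linear forms from \(S_G\) suffice. On a Cohen--Macaulay module, a system of parameters is a regular sequence. Each regular linear form multiplies the Hilbert series by \((1-t)\), so
\[
\Hilb_{A_G}=\omega_G(t)(1-t)^{d_G}/(1-t)^{3m+n}=\omega_G/(1-t)^{\tau},
\]
and this equals \(\romega_G\) by \eqref{alg:omega-reduced}. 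Finite length follows from the dimension count, and \(\dim A_G=\romega_G(1)\).

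The main subtle point is the sign bookkeeping in the \(f_d(2\mathrm i)\) computation. A second point to check is that general linear forms from \((S_G)_1\), rather than from \(\mathscr U_G\), form a system of parameters; this holds by the finiteness in \cref{alg:incidence-vector-bundle}.
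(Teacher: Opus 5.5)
Your proposal is correct and follows the paper's proof essentially step for step: the evaluation $f_d(2\mathrm i)=\mathrm i^d(1-d)$ turns $\theta_G(t,2\mathrm i)$ into the alternating Betti sum, the Hilbert series is read off the resolution, and Cohen--Macaulayness together with finiteness over $S_G$ makes general linear forms a regular system of parameters, with each cut multiplying the series by $1-t$. In a final write-up, delete your first guessed identity $f_d(2\mathrm i)=(-1)^{d-1}(d-1)$, which is false (e.g.\ $f_2(2\mathrm i)=1$); the formula $\mathrm i^d(1-d)$ that you derive from the double root $\mathrm i$ is the correct one.
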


\begin{proof}
See \cref{supp:three-invariants}.
\end{proof}

For graphs without tree components the nonnegativity of the coefficients
of \(\omega_G\) is part of the cited theory, and \eqref{alg:artin-hilbert}
realizes them as graded dimensions; a forest has \(\mathscr U_G=S_G\)
and \(\romega_G=1\).  One-dimensional sections over a polynomial ring in
\(\rho+1\) variables have the same Betti table (\cref{supp:pencil}), and
for connected graphs \(Q(G)\) satisfies the sharp bounds
\((\nu-1)2^\nu+2\le Q(G)\le(4+2\sqrt2)^{\nu-1}+(4-2\sqrt2)^{\nu-1}\)
(\cref{alg:degree-bounds}).

\subsection{The complete graph on four vertices}
\label{alg:k4-section}

For \(G=K_4\), we have \(n=4\), \(m=6\), \(\nu=3\).
The contributing states and their contributions to the resolution and
strong-field root counts are displayed in \cref{fig:k4-states}.
\input{figures/k4-states}

Thus \(S=S_{K_4}\) has 24 variables, \(\mathscr U_{K_4}\) is a
normal Cohen--Macaulay domain of dimension 22, and its full minimal
resolution is
\begin{equation}
 0\longrightarrow S(-6)^{16}
 \longrightarrow S(-5)^{24}
 \longrightarrow S\oplus S(-3)^4\oplus S(-4)^3
 \longrightarrow\mathscr U_{K_4}\longrightarrow0.
 \label{alg:k4-resolution}
\end{equation}
The alternating numerator and Hilbert series are
\begin{align*}
 \theta_{K_4}(t,2\mathrm i)
   &=1+4t^3+3t^4-24t^5+16t^6,\\
 \Hilb_{\mathscr U_{K_4}}(t)
   &=\frac{1+2t+3t^2+8t^3+16t^4}{(1-t)^{22}},
\end{align*}
so the Artin reduction has Hilbert series
\(\omega_{K_4}(t)=1+2t+3t^2+8t^3+16t^4\), the polynomial computed by
Watanabe and Fukumizu \cite[Example~2]{watanabe2011graphpolynomials},
and length 30, while the total Betti number is \(8+24+16=48=N(K_4)\),
the sharp upper bound of \cref{alg:degree-bounds}.  The \(8=2^3\)
module generators are the unit, four triangle classes, and three
four-cycle classes; any two cycles of \(K_4\) meet at a vertex, so every
product of two nonunit classes vanishes in
\(\mathscr U_{K_4}/S_+\mathscr U_{K_4}\).

An independent saturated Gr\"obner-basis computation at a recorded
specialization of the $K_4$ parameters over $\F_{32003}$ gives a
zero-dimensional quotient of length $48$, agreeing with this state
sum.\footnote{The input script and the saved Singular output are
reproduced in \cref{supp:k4-groebner}.
This is a finite-characteristic specialization check with multiplicity;
it does not certify $48$ rational points or simplicity.}
The proof of $N=Q$ above is independent of this computation.

Together, \cref{alg:resolution,alg:normal-cm,alg:functoriality,alg:hilbert}
and \cref{thm:degree} establish the algebraic assertions of
\cref{thm:structure}.

%% file: figures/k4-states.tex
\begin{figure}[htbp]
\centering
\begingroup
\newcommand{\kfourstate}[1]{%
  \begin{tikzpicture}[x=0.5cm,y=0.5cm,baseline=-0.15cm]
    \coordinate (a) at (0.75,1.25);
    \coordinate (b) at (0,0);
    \coordinate (c) at (1.5,0);
    \coordinate (d) at (0.75,0.42);
    \foreach \u/\v in {a/b,a/c,a/d,b/c,b/d,c/d}
      \draw[black!35,densely dotted,line width=0.4pt] (\u)--(\v);
    #1
    \foreach \u in {a,b,c,d} \fill (\u) circle[radius=1.25pt];
  \end{tikzpicture}%
}
\newcommand{\selectededge}[2]{\draw[MidnightBlue,line width=1.1pt] (#1)--(#2);}
\setlength{\tabcolsep}{6pt}
\footnotesize
\begin{tabular}{@{}lccccc@{}}
\toprule
State type & Empty & Triangle & Four-cycle & \(K_4-e\) & \(K_4\)\\[3pt]
 & \kfourstate{}
 & \kfourstate{\selectededge{a}{b}\selectededge{b}{c}\selectededge{c}{a}}
 & \kfourstate{\selectededge{a}{b}\selectededge{b}{d}\selectededge{d}{c}\selectededge{c}{a}}
 & \kfourstate{\selectededge{a}{b}\selectededge{a}{c}\selectededge{b}{c}\selectededge{b}{d}\selectededge{c}{d}}
 & \kfourstate{\selectededge{a}{b}\selectededge{a}{c}\selectededge{a}{d}\selectededge{b}{c}\selectededge{b}{d}\selectededge{c}{d}}\\[5pt]
Number of states & 1 & 4 & 3 & 6 & 1\\
Weight \(\wt(S)\) & 1 & 1 & 1 & 4 & 16\\
Internal degree \(q=|S|\) & 0 & 3 & 4 & 5 & 6\\
Homological degree \(p=|S|-v(S)\) & 0 & 0 & 0 & 1 & 2\\
Complex roots, total & 1 & 4 & 3 & 24 & 16\\
Real roots, total & 1 & 4 & 3 & 0 & 0\\
\bottomrule
\end{tabular}
\endgroup
\caption{Contributing state types of \(K_4\), with the edges of \(S\)
solid. Root counts are for sufficiently strong fields at fixed nonzero
real couplings; the first three columns give the eight module generators and
the eight Eulerian sectors.}
\label{fig:k4-states}
\end{figure}

%% file: sections/strong-field.tex
\section{The strong-field boundary of the BP family}
\label{sec:strong-field}

The factors in the state sum have a second geometric interpretation in
the actual BP family.  Fix edge parameters
\begin{equation}
  a_e\in\C^*,\qquad a_e^2\ne1\quad(e\in E),
  \qquad p_i=b_i^{-1}\quad(i\in V).
  \label{sf:parameters}
\end{equation}
The edge parameters need not be generic.  We study the boundary where
all $p_i$ vanish, with no restriction on their relative rates.
Call $S\subseteq E$ a \emph{state} if $k_i:=d_S(i)$ is never one, and put
\begin{equation}
 r_i=\begin{cases}1,&k_i=0,\\ k_i-1,&k_i\ge2,\end{cases}
 \qquad \wt(S)=\prod_{i\in V}r_i.
 \label{sf:state-weights}
\end{equation}
Thus $Q(G)=\sum_S \wt(S)$.  A vertex with $k_i\ge2$ will be called active
in this section.

\begin{theorem}[Statewise power-map model]
\label{sf:normal-form}
For each state $S$ there is a smooth local analytic extension of the
fixed-$a$ admissible BP family across $p=0$ whose parameter map, in local
coordinates $w=(w_i)_{i\in V}$, is exactly
\begin{equation}
       p_i=(-1)^{k_i}w_i^{r_i}\qquad(i\in V).
       \label{sf:power-map}
\end{equation}
For all sufficiently small nonzero $p_i$, this chart gives $\wt(S)$
distinct simple admissible BP solutions.  The solutions supplied by
different states are distinct.
\end{theorem}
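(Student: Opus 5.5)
The plan is to treat the strong-field boundary $p=0$ the way \cref{lem:unique-weak-lift} treats the weak-coupling boundary: first identify the limit configuration attached to a state $S$, then blow up the messages that vanish, and finally apply the analytic implicit function theorem in rescaled coordinates. The heuristic picture I expect is the following. Messages along inactive edges satisfy $X_\alpha\to\infty$, so $y_\alpha\to a_e$ by \eqref{eq:bp-update}. Along an edge of $S$, in both directions, the cavity ratio tends to $-1/a_e$, so $y_\alpha\to0$ and the smallness compensates $b_i=p_i^{-1}\to\infty$.

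Counting orders at an active vertex $i$ with $k_i\ge2$ runs as follows.
\begin{itemize}
\item Let $u_i$ be the common order of the incoming $S$-messages at $i$.
\item The cavity ratio on an outgoing $S$-edge omits one of these messages, so it is $\sim b_i u_i^{k_i-1}$.
\item Requiring this to stay finite and equal to approximately $-1/a_e$ forces $p_i\sim(\text{unit})\,u_i^{k_i-1}$.
\end{itemize}
This explains the exponent $r_i=k_i-1$. It also explains why $k_i=1$ is impossible: a single small message cannot balance $b_i\to\infty$. The product of the $k_i$ limiting values $-1/a_e$ and the $a_e$ factors on inactive edges produces the sign $(-1)^{k_i}$. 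At inactive vertices ($k_i=0$) the natural coordinate is just $p_i$, so $r_i=1$.

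Concretely, for a fixed state $S$ I would introduce coordinates $w_i$ and write each incoming active message as $y_{j\to i,e}=w_i\,\eta_{j\to i,e}$, with $\eta$ finite. For inactive vertices I would set $w_i=-p_i$ or $w_i=p_i$ according to the sign convention. I would then replace $b_i$ by $p_i^{-1}$ and multiply through by suitable powers of $w_i$ and $p_i$. The aim is to obtain a polynomial system $\Phi(\eta,\text{inactive }y,\,w,\,p)=0$, analytic near the limit point, in which $p_i$ enters as $p_i=w_i^{r_i}\cdot g_i(\eta,y,w)$ with $g_i$ a unit.

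At $w=0$ the system should decouple edge by edge, much as it did in the weak-coupling proof:
\begin{itemize}
\item each inactive message solves $y=a_e$;
\item each active pair $(X_{i,e},X_{j,e})$ solves $X=-1/a_e$.
\end{itemize}
The key check is that the Jacobian with respect to the non-parameter variables is invertible there. This is where $a_e^2\ne1$ should enter: it gives $-1/a_e\ne-a_e$, so the update denominators and the derivative $(a^2-1)/(X+a)^2$ do not vanish.

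The implicit function theorem then gives a smooth germ parametrized by $w$ alone. The parameter map becomes $p_i=w_i^{r_i}g_i(w)$, with $g_i(0)=(-1)^{k_i}\cdot(\text{nonzero constant})$. Replacing $w_i$ by $w_i\,\bigl((-1)^{k_i}g_i\bigr)^{1/r_i}$, using an analytic $r_i$-th root of a unit, makes the map exactly \eqref{sf:power-map}.

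For small nonzero $p$ the power map has exactly $\prod_i r_i=\wt(S)$ preimages. Each is unramified because $w_i\ne0$, so the corresponding roots are simple. Admissibility \eqref{eq:admissibility} follows from the limit configuration:
\begin{itemize}
\item every $y_\alpha\ne0$, since $w_i\ne0$;
\item every $X_\alpha+a\ne0$, using $-1/a\ne-a$ and $X\to\infty$ otherwise;
\item $1+F_i\ne0$, since $F_i\to0$ at active vertices and $F_i\to\infty$ at inactive ones.
\end{itemize}
Distinctness across states holds because the set of edges whose messages tend to $0$ recovers $S$.

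I expect the main obstacle to be setting up the rescaled system so that it is genuinely analytic at $w=0$ with invertible Jacobian uniformly, with no restriction on the relative rates of the $p_i$. In particular, the interaction between a small message arriving at an inactive vertex and that vertex's large $b$ must not produce hidden singular terms. My first attempt would be to work in cavity-ratio variables $X_\alpha$ for active edges and $1/X_\alpha$ for inactive ones. In these variables the updates $y=(aX+1)/(X+a)$ are regular at both limit values, and every vertex equation becomes a monomial identity involving only $p_i$ and its own incoming messages.
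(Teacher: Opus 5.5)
Your overall plan matches the paper's: linearize at the limit configuration of \(S\), solve the edge equations by the implicit function theorem, reduce the vertex equations to \(p_i=w_i^{r_i}\cdot(\text{unit})\), absorb the units by analytic \(r_i\)th roots, then count, check admissibility, and separate states by their vanishing messages. The gap is that the limit configuration you would linearize at is wrong. A message off \(S\) does not always have \(X_\alpha\to\infty\). Since \(X_\alpha=F_{t(\alpha)}/y_{\bar\alpha}\), and \(y_{\bar\alpha}\) stays away from \(0\) off \(S\), a message whose \emph{tail} is active has \(X_\alpha\to0\) and \(y_\alpha\to a_e^{-1}\). The limit is \(a_e^{\sigma_{t(\alpha)}}\), so it is determined by the tail vertex, not by whether the edge lies in \(S\). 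For example, take \(K_4\) with \(S\) a triangle: the messages from the triangle to the fourth vertex tend to \(a_e^{-1}\). On such edges your chart \(1/X_\alpha\) is infinite at the boundary, and the Jacobian check ``each inactive message solves \(y=a_e\)'' is made at a point that is not the limit. The chart has to be chosen by vertex type. Relatedly, no small message ever arrives at an inactive vertex, since all its edges are off \(S\), so the obstacle you anticipate there does not occur.

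Second, the blow-up \(y_{j\to i,e}=w_i\eta_{j\to i,e}\) at an active vertex leaves the rescaling \((w_i,\eta_{\cdot\to i})\mapsto(\lambda w_i,\lambda^{-1}\eta_{\cdot\to i})\) free. With \(w\) fixed you have \(2m\) BP equations in the \(2m+n_{\mathrm{act}}\) unknowns \((\eta,y,p_{\mathrm{act}})\). So the implicit function theorem gives no germ over \(w\), and \(p_i=w_i^{r_i}g_i(\eta,y,w)\) does not follow until you impose one normalization per active vertex.

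The paper repairs both points at once by taking \(f_i=F_i\) at active and \(f_i=F_i^{-1}\) at inactive vertices as independent coordinates.
The edge equations \(y_{i\to j,e}=(a_eF_i+y_{j\to i,e})/(F_i+a_ey_{j\to i,e})\) then contain no \(p\) and decouple edge by edge. The base branch has identity Jacobian at \(f=0\), with limits \(a_e^{\sigma_i}\). The \(S\)-branch comes from the involution \eqref{sf:edge-involution} as \(y^-_{j\to i,e}=-F_i/y^+_{j\to i,e}\); this is your \(\eta_{j\to i,e}=-1/y^+_{j\to i,e}\to-a_e\). The vertex equations become exactly \eqref{sf:unit-equations}, with \(g_i\) the product of the incoming \(\eta\)'s on \(S\) and the incoming messages off \(S\). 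Because the chart is a full neighborhood of \(f=0\), the relative rates of the \(p_i\) never enter.

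Your blow-up also works once you normalize by \(w_i=F_i\). Then the \(S\)-block \(w_j\eta_{i\to j}=(a_e+\eta_{j\to i})/(1+a_e\eta_{j\to i})\), together with its mirror equation, has an antidiagonal Jacobian with entries \(-1/(1-a_e^2)\) at \(w=0\). In that version \(a_e^2\ne1\) enters the Jacobian, whereas the paper needs it only for \(X_{i,e}+a_e\ne0\).
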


\subsection{Two edge branches near the boundary}

Keep $S$ fixed.  Set $\sigma_i=-1$ at active vertices and $\sigma_i=1$
at inactive vertices, and introduce coordinates
\begin{equation}
 F_i=\begin{cases}f_i,&k_i\ge2,\\ f_i^{-1},&k_i=0.\end{cases}
 \label{sf:vertex-chart}
\end{equation}
Initially all $f_i$ are nonzero.  For $e=\{i,j\}$, the edge equations,
with the full vertex ratios $F_i,F_j$ treated as independent variables,
are
\begin{equation}
 y_{i\to j,e}
 =\frac{a_eF_i+y_{j\to i,e}}{F_i+a_ey_{j\to i,e}},
 \qquad
 y_{j\to i,e}
 =\frac{a_eF_j+y_{i\to j,e}}{F_j+a_ey_{i\to j,e}}.
 \label{sf:edge-equations}
\end{equation}
At an inactive vertex the first fraction can be rewritten as
$(a_e+f_iy_{j\to i,e})/(1+a_ef_iy_{j\to i,e})$; at an active vertex
it is $(a_ef_i+y_{j\to i,e})/(f_i+a_ey_{j\to i,e})$.
These expressions extend holomorphically to $f=0$ at
\begin{equation}
             y^+_{i\to j,e}(0)=a_e^{\sigma_i}.
             \label{sf:base-branch}
\end{equation}
At this point, each fraction has zero derivative with respect to the
opposite message.  Consequently the Jacobian of the two residuals
``message minus fraction'' with respect to the two messages is the
identity.  The analytic implicit function theorem gives unique
nonvanishing holomorphic functions $y^+_{i\to j,e}(f)$ near zero.
They may be constructed simultaneously for all edges, and also with
$a$ varying near its fixed value.  The superscript $+$ denotes the
branch which is positive when $a_e>0$ and all $f_i>0$; it imposes no
order on complex points.

If $e\in S$, both its endpoints are active.  The other branch is
\begin{equation}
 y^-_{i\to j,e}=-\frac{F_j}{y^+_{i\to j,e}},
 \qquad
 y^-_{j\to i,e}=-\frac{F_i}{y^+_{j\to i,e}}.
 \label{sf:other-branch}
\end{equation}
It satisfies the same edge equations.  For an explicit verification,
write $a=a_e$, $y_i=y_{i\to j,e}$, $y_j=y_{j\to i,e}$, and
\[
 R_i(y)=y_i(F_i+ay_j)-aF_i-y_j,
 \qquad R_j(y)=y_j(F_j+ay_i)-aF_j-y_i.
\]
The involution
$\iota(y_i,y_j)=(-F_j/y_i,-F_i/y_j)$ satisfies
\begin{equation}
 R_i(\iota y)=-\frac{F_i}{y_iy_j}R_j(y),\qquad
 R_j(\iota y)=-\frac{F_j}{y_iy_j}R_i(y).
 \label{sf:edge-involution}
\end{equation}
For nonzero $F_i,F_j,y_i,y_j$ this is an invertible change of variables
and an invertible change of generators of the residual ideal.  In
particular, the edge Jacobian remains invertible on the other branch.
Moreover, $y^-_{i\to j,e}\to0$, whereas the base-branch messages tend
to nonzero constants.

\subsection{Absorbing the analytic units}

Choose the message branch $y^-$ on $S$ and $y^+$ off $S$, and write
$\varepsilon_e=-1$ on $S$ and $\varepsilon_e=1$ off $S$.
The vertex consistency equations become
\begin{equation}
 F_i=b_i(-1)^{k_i}F_i^{k_i}
       \prod_{e=\{i,j\}\ni i}(y^+_{j\to i,e})^{\varepsilon_e}.
 \label{sf:consistency}
\end{equation}
Define the analytic units
\begin{equation}
 C_{S,i}(f)=
   \prod_{e=\{i,j\}\ni i}(y^+_{j\to i,e}(f))^{-\varepsilon_e},
 \qquad
 C_{S,i}(0)=\prod_{e=\{i,j\}\ni i}a_e^{-\varepsilon_e\sigma_j}\ne0.
 \label{sf:units}
\end{equation}
For both types of vertices, \eqref{sf:consistency} is equivalent to
\begin{equation}
                   f_i^{r_i}=(-1)^{k_i}p_iC_{S,i}(f).
                   \label{sf:unit-equations}
\end{equation}
On a small simply connected neighborhood, choose an $r_i$th root of
each unit and set
\begin{equation}
                    w_i=f_iC_{S,i}(f)^{-1/r_i}.
                    \label{sf:coordinate-change}
\end{equation}
Its derivative at zero is diagonal with nonzero diagonal entries:
derivatives of the units in other variables are multiplied by $f_i$.
It is therefore an analytic coordinate change, even though the units
depend on all the variables.  Equation \eqref{sf:unit-equations} now
becomes \eqref{sf:power-map} exactly.

For all sufficiently small nonzero $p_i$, the independent power
equations give $\wt(S)$ distinct points, all with $f_i\ne0$.
We check admissibility in the original coordinates.  All reconstructed
messages are nonzero.  With
$X_{i,e}=F_i/y_{j\to i,e}$, an inactive vertex has
$X_{i,e}\to\infty$, an active vertex on an edge outside $S$ has
$X_{i,e}\to0$, and on an edge in $S$ one has
\[
             X_{i,e}=-y^+_{j\to i,e}\longrightarrow-a_e^{-1}.
\]
In all three cases $X_{i,e}+a_e\ne0$ in a sufficiently small
punctured neighborhood; in the last case this uses $a_e^2\ne1$.
Also $1+F_i\ne0$, since $F_i\to0$ or $F_i\to\infty$.
Thus every constructed point belongs to the specified admissible
localization, rather than only to the equations after clearing
denominators.

To check simplicity, adjoin $F_i$ and the equations
$F_i=b_i\prod_{j\to i,e}y_{j\to i,e}$ to the original BP system.
The edge Jacobians are invertible on the base branches and, by
\eqref{sf:edge-involution}, on the chosen other branches as well.
Eliminating the edge messages leaves precisely
\eqref{sf:unit-equations}.  At the constructed points the coordinate
change \eqref{sf:coordinate-change} is invertible, and the derivative
of \eqref{sf:power-map} is the invertible diagonal matrix
$\diag((-1)^{k_i}r_iw_i^{r_i-1})$.
The Schur complement therefore shows that the original BP Jacobian
with respect to the message variables is invertible.
Finally, the messages which tend to zero are exactly those on $S$,
in both orientations.  Different states give disjoint sets of
solutions after the neighborhood is made smaller.
This proves Theorem~\ref{sf:normal-form}.

\subsection{Exhaustion of the complex fiber}

The construction has produced $Q(G)$ simple points for every fixed $a$
satisfying \eqref{sf:parameters}; the following lemma excludes
additional points in such a special fiber.

\begin{lemma}[Saturation of the generic degree]
\label{sf:degree-saturation}
Let $\pi:X\to P$ be a dominant morphism of smooth irreducible complex
varieties of the same dimension.  Suppose that $\pi^{-1}(U)\to U$ is
finite \textup{\'{e}}tale of degree $N$ for a nonempty Zariski open
$U\subseteq P$.  If one fiber contains $N$ distinct points at which
$d\pi$ is invertible, then that fiber consists of precisely these
points and is reduced.
\end{lemma}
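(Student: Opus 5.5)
The plan is to compare preimage counts near the special fiber: each good point carries one local sheet, and any extra point would supply an $(N{+}1)$st preimage for some nearby parameter in $U$. Let $p_0\in P$ be the parameter of the given fiber, and let $x_1,\dots,x_N\in\pi^{-1}(p_0)$ be the given distinct points at which $d\pi$ is invertible.

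\emph{Step 1: one analytic sheet through each good point.} By the holomorphic inverse function theorem, each $x_k$ has an open neighborhood $W_k$ in the analytic topology that $\pi$ maps biholomorphically onto an open neighborhood of $p_0$. Since the $x_k$ are distinct, we may shrink the $W_k$ to be pairwise disjoint. We may then fix a single open ball $B\ni p_0$ with $B\subseteq\pi(W_k)$ for every $k$. Thus every $q\in B$ has at least one preimage in each $W_k$, hence at least $N$ distinct preimages in $\bigcup_k W_k$.

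\emph{Step 2: an extra point yields an extra preimage.} Suppose $x'\in\pi^{-1}(p_0)$ is different from all $x_k$. Choose an open neighborhood $W'$ of $x'$ that is disjoint from every $W_k$ (shrinking the $W_k$ further if necessary, keeping $B$ fixed, or choosing $B$ afterwards). By continuity we may also arrange $\pi(W')\subseteq B$. The set $\pi^{-1}(U)$ is a nonempty Zariski open subset of the irreducible variety $X$, hence dense in the analytic topology. So $W'$ contains a point $x''$ with $q:=\pi(x'')\in U\cap B$. Now $q$ has at least $N$ preimages in $\bigcup_k W_k$ and one more, $x''$, in the disjoint set $W'$. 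This contradicts the hypothesis that $\pi^{-1}(U)\to U$ is finite étale of degree $N$, because over an algebraically closed field a finite étale fiber of degree $N$ has exactly $N$ points. Hence the fiber is set-theoretically $\{x_1,\dots,x_N\}$.

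The argument deliberately makes no case distinction according to whether $x'$ is isolated in its fiber or lies on a positive-dimensional fiber component. The only inputs are density of $\pi^{-1}(U)$, which uses the irreducibility of $X$, and continuity of $\pi$ at $x'$. This is why no properness of $\pi$ is needed.

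\emph{Step 3: reducedness.} At each $x_k$, the map $\pi$ is a morphism of smooth varieties of the same dimension with invertible differential, so it is étale at $x_k$. Therefore the scheme-theoretic fiber $\pi^{-1}(p_0)$ is étale over $\operatorname{Spec}\C$ at each of its points. Since these are all of its points by Step 2, the fiber is reduced and consists of $N$ reduced points.

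The main obstacle I expect is exactly the worry that extra points of the special fiber might lie in positive-dimensional fiber components or "come from infinity," where a local degree or open-mapping argument fails. Step 2 is designed to bypass this worry: rather than using the local structure of $\pi$ at $x'$, it only needs points of $\pi^{-1}(U)$ arbitrarily close to $x'$.
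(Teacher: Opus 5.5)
Your proposal is correct and follows essentially the same route as the paper's proof. Both choose disjoint inverse-function-theorem sheets over a common neighborhood $B$ of $p_0$, then use the analytic density of the Zariski open set $\pi^{-1}(U)$ near an extra point to produce a parameter in $U\cap B$ with $N+1$ preimages. Both then obtain reducedness from étaleness at the $N$ given points. Your hedge about shrinking the $W_k$ is harmless; in fact no shrinking is needed, since a point over $p_0$ in the closure of $W_k$ must equal $x_k$, because $\pi|_{W_k}$ is a homeomorphism onto an open set containing $p_0$.
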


\begin{proof}
The inverse function theorem gives disjoint neighborhoods of the
$N$ points, each mapping biholomorphically onto a common analytic
neighborhood $B$ of their image $p$.  If an additional point $x$ lay
over $p$, choose a neighborhood of $x$ disjoint from these $N$
neighborhoods and contained in $\pi^{-1}(B)$.
The nonempty Zariski open set $\pi^{-1}(U)$ is analytically dense in
the irreducible smooth variety $X$.  The neighborhood of $x$ thus
contains a point $x'\in\pi^{-1}(U)$.
Its image belongs to $B\cap U$ and has at least $N+1$ distinct
preimages, a contradiction.  At each of the remaining $N$ points
$\pi$ is a local isomorphism, so its fiber is reduced there.
\end{proof}

\begin{corollary}[All complex roots at strong field]
\label{sf:exhaustion}
For every fixed choice \eqref{sf:parameters}, there is $\epsilon>0$
such that whenever $0<|p_i|<\epsilon$ for all $i$, the admissible BP
fiber has exactly $Q(G)$ points and every point is simple.
In particular it has no additional isolated points, multiple points,
or positive-dimensional components.
\end{corollary}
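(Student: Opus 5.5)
The corollary follows by applying \cref{sf:degree-saturation} to the admissible BP family, but with the edge parameters held fixed. The generic family \(\pi_G\colon\mathcal X_G\to P_G\) is the wrong map to use: its fibers are indexed by all of \((a,b)\), and there are only \(Q(G)\) points, not a positive-dimensional set, over a fixed \((a,b)\). Since the lemma only requires one fiber containing \(N\) distinct points at which \(d\pi\) is invertible, the plan is to apply it to \(\pi_G\) itself, with \(X=\mathcal X_G\), \(P=P_G\) and \(N=N(G)\). \cref{prop:bp-family} supplies the hypotheses: \(\mathcal X_G\) is smooth and irreducible of dimension \(m+n=\dim P_G\), \(\pi_G\) is dominant, and it is finite \'etale of degree \(N(G)\) over a nonempty Zariski open subset. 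By \cref{thm:degree}, \(N(G)=Q(G)\).

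Fix \(a\) satisfying \eqref{sf:parameters}. By \cref{sf:normal-form}, applied to each of the finitely many states \(S\), there is \(\epsilon>0\) (the minimum over states) such that for \(0<|p_i|<\epsilon\) the fiber over \((a,b)\) with \(b_i=p_i^{-1}\) contains \(\sum_S\wt(S)=Q(G)\) distinct admissible points, and points from different states are distinct. Each of these points is simple: the BP Jacobian with respect to the message variables \(y\) is invertible there. Invertibility of \(d\pi_G\) at such a point is equivalent to this. Locally \(\mathcal X_G\) is cut out in \((y,a,b)\)-space by the \(2m\) equations \(f_\alpha=0\), and \(\pi_G\) is projection to \((a,b)\). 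If \(\partial f/\partial y\) is invertible, the implicit function theorem makes \(\pi_G\) a local analytic isomorphism at that point, so \(d\pi_G\) is invertible. The Jacobian computed in the proof of \cref{sf:normal-form} was taken in the auxiliary system with \(F_i\) adjoined and then eliminated; I will note that adjoining \(F_i\) through the equations \(F_i=b_i\prod y\), which are solved for \(F_i\), does not change invertibility of the message Jacobian.

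\cref{sf:degree-saturation} then says the fiber consists of exactly these \(Q(G)\) points and is reduced. This gives the exact count and simplicity. It also rules out additional isolated points, multiple points, and positive-dimensional components, since any of these would contribute a point beyond the \(Q(G)\) already listed. The step needing care is the matching of hypotheses: the lemma requires \(N\) to be the degree of the étale part, and this is exactly where \(N=Q\) from \cref{thm:degree} enters. Without it, the strong-field count would only bound \(N(G)\) from below, and the lemma could not be applied.
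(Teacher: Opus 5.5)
Your argument is correct and is essentially the paper's proof: apply \cref{sf:degree-saturation} to the full family $\pi_G$ of \cref{prop:bp-family} with $N=N(G)=Q(G)$ by \cref{thm:degree}, and use the invertible message Jacobian at the $Q(G)$ state-chart points, via the implicit function theorem in all parameters including $a$, to get invertibility of $d\pi_G$. Delete your opening sentences, which announce a fixed-$a$ family and call $\pi_G$ the wrong map before you rightly use $\pi_G$: the full family is essential, because the generic degree $Q(G)$ and the smoothness and irreducibility required by the lemma are established for it, not for a fixed-$a$ slice.
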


\begin{proof}
Apply Lemma~\ref{sf:degree-saturation} to the full parameter family
of Proposition~\ref{prop:bp-family}.  By Theorem~\ref{thm:degree}, its
generic degree is $Q(G)$.
The $\sum_S \wt(S)=Q(G)$ points of Theorem~\ref{sf:normal-form} are
noncritical points of the full parameter projection: invertibility
of the message Jacobian permits all parameters, including $a$, to
vary by the implicit function theorem.  The lemma applies to each
of the stated fibers.
\end{proof}

\subsection{Boundary algebras and state factors}

\begin{proposition}[The algebra at a state]
\label{sf:boundary-algebra}
The special fiber of the state chart at $p=0$ has local algebra
\begin{equation}
 \mathcal B_{S,0}=\C[w_i:i\in V]/(w_i^{r_i}:i\in V).
 \label{sf:local-algebra}
\end{equation}
It is a complete intersection of length $\wt(S)$.
The ramification index in the $i$th boundary direction is $r_i$.
\end{proposition}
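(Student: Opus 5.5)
The plan is to read everything off the exact power-map normal form of \cref{sf:normal-form}. By that theorem, the state chart is a smooth analytic germ with coordinates \(w=(w_i)_{i\in V}\) near \(w=0\). Its parameter map to the \(p\)-space is \(p_i=(-1)^{k_i}w_i^{r_i}\). The special fiber at \(p=0\) is therefore the analytic subspace cut out by the pullbacks of the coordinates \(p_i\), namely the ideal generated by \((-1)^{k_i}w_i^{r_i}\). Since the signs are units, this ideal is \((w_i^{r_i}:i\in V)\) in the convergent power series ring \(\C\{w\}\).

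First, I identify the local algebra. Because the ideal contains a power of every \(w_i\), it is primary to the maximal ideal. Hence \(\C\{w\}/(w_i^{r_i})\cong\C[w]/(w_i^{r_i})\): modulo this ideal every power series truncates to a polynomial, and the analytic and algebraic quotients agree. This gives \eqref{sf:local-algebra}.

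Second, I establish the complete intersection property and the length. The elements \(w_1^{r_1},\dots,w_n^{r_n}\) involve disjoint variables, so they form a regular sequence in the \(n\)-dimensional regular local ring, which makes the quotient a zero-dimensional complete intersection. The quotient decomposes as \(\bigotimes_i\C[w_i]/(w_i^{r_i})\), with monomial basis \(\prod_i w_i^{c_i}\), \(0\le c_i<r_i\). Its length is therefore \(\prod_i r_i=\wt(S)\) by \eqref{sf:state-weights}. Inactive vertices have \(r_i=1\) and contribute trivial factors.

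Third, I treat the ramification index. In the \(i\)th boundary direction the map is \(w_i\mapsto(-1)^{k_i}w_i^{r_i}\), independent of the other coordinates. The divisor \(\{p_i=0\}\) therefore pulls back to \(r_i\{w_i=0\}\), so the valuation of \(p_i\) along \(w_i=0\) is \(r_i\). Equivalently, a small loop around \(p_i=0\) lifts to an \(r_i\)-sheeted cyclic cover.

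The only point needing care is that the chart really is a smooth extension with \(w\) as global coordinates, so that the scheme-theoretic fiber is computed in \(\C\{w\}\). This is supplied by the analytic coordinate change \eqref{sf:coordinate-change} in the proof of \cref{sf:normal-form}. After that, every step is a direct consequence of the exact normal form.
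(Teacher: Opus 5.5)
Your proposal is correct and follows the paper's argument: both read everything off the exact power-map normal form $p_i=(-1)^{k_i}w_i^{r_i}$. In both, the monomials $\prod_i w_i^{c_i}$ with $0\le c_i<r_i$ give the length $\wt(S)$, and the $i$th power equation gives the ramification index. The only difference is that the paper first notes these monomials form a free basis over the convergent parameter ring $\C\{p_i:i\in V\}$ for the regular sequence $(-1)^{k_i}w_i^{r_i}-p_i$ and then sets $p=0$, which also records flatness of the chart over the parameters, whereas you compute the special fiber directly in $\C\{w\}$.
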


\begin{proof}
See \cref{supp:boundary-algebra}.
\end{proof}

The state summand of the cohomology table has the factor
$H^1(\Pp^1,\cO(-k_i))$, of dimension $k_i-1=r_i$, at an active vertex
and $H^0(\Pp^1,\cO)$ at an inactive one, so the dimensions entering the
Betti table are the local lengths and ramification indices of the BP
family.  The comparison is statewise and linear, identifying neither
cup product with the multiplication in \eqref{sf:local-algebra} nor a
deformation differential between states; $p=0$ lies outside the
admissible BP space, so this is boundary ramification
(\cref{sf:serre-comparison} gives a formulation through Serre duality).

%% file: sections/real-sectors.tex
\section{Real sign sectors and the minimum real count}
\label{sec:real}

We now take physical parameters
$a_e=e^{2J_e}>0$ and $b_i=e^{2h_i}>0$.
A real solution is still required to lie in the admissible
localization, but its messages may be negative.
Write $R_G(J,h)$ for the number of distinct admissible real
solutions, allowing the value $\infty$, and $N_+(J,h)$ for the
number with all messages positive.
The binary cycle space controls a decomposition of the entire
real solution set, not just the strong-field limit.

\subsection{A maximum principle for the vertex ratios}

\begin{proposition}[Eulerian message signs]
\label{real:sign-sectors}
At any admissible real solution with $a_e,b_i>0$, one has $F_i>0$
at every vertex.  The two messages on each edge have the same sign,
and the set of edges with negative messages is Eulerian.
These conclusions require neither nondegeneracy nor a finite fiber.
\end{proposition}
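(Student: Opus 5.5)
The easy parts are the last two claims. They follow from the auxiliary edge relations once positivity of the vertex ratios is known, so the plan is to prove $F_i>0$ first and then read off the message signs and the Eulerian property.

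\emph{Step 1: consequences of $F_i>0$.} Assume $F_i>0$ at every vertex. Work in the coordinates of \eqref{eq:aux-coordinates}, with $s_e=a_e^2>0$ and $\zeta_i>0$. By \eqref{eq:aux-recovery}, $y_{j\to i,e}=F_i/(a_eA_{e,i})$, so the message into $i$ along $e$ has the sign of $A_{e,i}$. The edge relations \eqref{eq:aux-edge} can be written as
\[
 A_{e,i}(1+s_eA_{e,j})=F_i(1+A_{e,j}),\qquad A_{e,j}(1+s_eA_{e,i})=F_j(1+A_{e,i}).
\]
Suppose $A_{e,i}<0<A_{e,j}$. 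Then the left side of the first relation is negative and its right side is positive, a contradiction. The symmetric case is handled by the second relation. Hence the two messages on an edge have the same sign. Next, the vertex equation $F_i=b_i\prod_{\gamma:h(\gamma)=i}y_\gamma$ with $F_i,b_i>0$ forces an even number of negative incoming messages at $i$. So the negative edge set has even degree at every vertex, i.e.\ it is Eulerian. Messages equal to $\infty$ are handled by the spin change \eqref{eq:spin-gauge} at the relevant vertices, which preserves positivity of the parameters and of $F$.

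\emph{Step 2: $F_i>0$ via a maximum principle.} This is the main obstacle, and the method is suggested by the section title. Use the belief coordinates of the proof of \cref{prop:bp-family}. There $\xi_i=F_i/(1+F_i)$ is real, finite and different from $0$ and $1$, and for each edge the real edge belief $(q_e,\rho_e,\sigma_e,\delta_e)$ has marginals $\xi_i,\xi_j$ and satisfies $q_e\delta_e=a_e^2\rho_e\sigma_e$ with $a_e^2>0$. The condition $F_i>0$ is equivalent to $\xi_i\in(0,1)$. The spin change $\xi_i\mapsto1-\xi_i$ at all vertices keeps the cross ratio condition, since it maps $a_e$ to itself. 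So it suffices to show $\max_i\xi_i<1$; the bound $\min_i\xi_i>0$ then follows by symmetry.

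Suppose instead $M=\max_i\xi_i>1$, attained at $i$, and take a neighbor $j$ of $i$ along some edge $e$. The edge belief is then a real $2\times2$ table with positive cross ratio $q\delta/(\rho\sigma)=a^2>0$, row sum $\xi_i>1$ and column sum $\xi_j\le\xi_i$. I would analyse the sign patterns of such a table, either directly or through the explicit form $Z_e^{-1}(aXY,X,Y,a)$ with $Z_e=(1+F_i)(Y+a)$, in order to derive $\xi_j>\xi_i$. That would contradict maximality. An isolated vertex has $F_i=\zeta_i>0$ directly.

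If this marginal estimate fails as stated, I would first try the same extremal argument on a different quantity: choose $i$ maximizing $v(1+F_i)$-type data, or $|\log|F_i||$ among vertices with $F_i<0$. Then I would use the real edge quadratic \eqref{eq:edge-quadratic}, whose leading coefficient $s_e>0$ and constant term $-F_i$ control the signs of its roots, together with $\zeta_i=F_i^{1-d_i}\prod_e A_{e,i}>0$. The aim is to show that every $A_{e,i}$ has a definite sign and then reach a contradiction from the sign of $\zeta_i$, mirroring the residue-field argument of \cref{lem:weak-boundary}. Neither version uses finiteness of the fiber or nondegeneracy, as the statement requires.
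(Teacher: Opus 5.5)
Your Step~1 is correct. It differs only cosmetically from the paper, which argues that $y_{i\to j,e}<0$ forces $X_{i,e}<0$, and then $F_i=X_{i,e}y_{j\to i,e}>0$ forces $y_{j\to i,e}<0$. The gap is in Step~2, which is the real content of the proposition. You never supply the inequality that drives the maximum principle, and the one you aim for is false.

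First, the neighbour cannot be arbitrary: along an edge whose incoming message is positive, nothing constrains $\xi_j$. You must use an edge with $y_{j\to i,e}<0$, which exists since $F_i<0<b_i$. Then $X=X_{j,e}<0<Y=X_{i,e}$, the reverse message is positive, and $F_j<0$. Since $1+F_i=Z/(X+a)$ and $1+F_j=Z/(Y+a)$ with $Y+a>0$, the neighbour lies on the same side ($\xi_j>1$) only when $X+a>0$. This is automatic for $a_e>1$, and there your inequality $\xi_j>\xi_i$ does hold. For $a_e<1$, however, a negative message forces $-1/a<X<-a$, so $\xi_i>1$ implies $\xi_j<0$. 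For instance, $a=\tfrac12$, $X=-1$, $Y=2$ gives $F_i=-2$ and $F_j=-0.8$, i.e.\ $\xi_i=2$ and $\xi_j=-4$. So at a maximizer of $\xi$ the claim $\xi_j>\xi_i$ fails on antiferromagnetic edges. Your reduction of $\min_i\xi_i>0$ to the maximum ``by symmetry'' cannot repair this, because the propagation alternates between the two sides.

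The missing idea is to extremize a quantity that is invariant under $F\mapsto F^{-1}$, so that both sides are treated at once. The paper takes $m_i=(F_i-1)/(F_i+1)=2\xi_i-1$ and chooses, among vertices with $F_i<0$, one with maximal $|m_i|$. It then uses $m^2=1-4F/(1+F)^2$ together with $F_i=Y(aX+1)/(X+a)$ and $F_j=X(aY+1)/(Y+a)$ to get
\[
 m_j^2-m_i^2=\frac{4a(X-Y)(XY-1)}{Z^2}>0,\qquad Z=aXY+X+Y+a,
\]
for $X<0<Y$, a contradiction.

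Your fallback is in this spirit, but as written it extremizes in the wrong direction. For $F<0$ one has $|m|=\coth\bigl(|\log|F||/2\bigr)$, so you must minimize, not maximize, $|\log|F_i||$. At a maximizer, the neighbour produced has smaller $|\log|F_j||$, and no contradiction arises. Likewise, your $\zeta_i$-sign contradiction needs all incident $A_{e,i}<0$ at the extremal vertex, i.e.\ no negative incoming message there. That is exactly what the displayed inequality provides and what your proposal leaves unproved.
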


\begin{proof}
Put $m_i=(F_i-1)/(F_i+1)$, which is defined by admissibility.
Suppose that some $F_i<0$, and choose such an $i$ with maximal
$|m_i|$.  Since $b_i>0$, at least one incoming message
$y_{j\to i,e}$ is negative.  Write
\[
 X=X_{j,e},\qquad Y=X_{i,e},\qquad a=a_e,
 \qquad Z=aXY+X+Y+a.
\]
The update $(aX+1)/(X+a)<0$ implies $X<0$.
Since $F_i=Yy_{j\to i,e}<0$, it follows that $Y>0$.
The reverse message is therefore positive and
$F_j=Xy_{i\to j,e}<0$.
The edge identities give
\[
 1+F_j=\frac{Z}{Y+a},\qquad
 1+F_i=\frac{Z}{X+a}.
\]
All denominators and $Z$ are nonzero by admissibility.  Using
$m(F)^2=1-4F/(1+F)^2$, direct subtraction yields
\begin{equation}
 m_j^2-m_i^2
 =\frac{4a(X-Y)(XY-1)}{Z^2}>0,
 \label{real:maximum-principle}
\end{equation}
because $X<0<Y$.  This contradicts the choice of $i$, so all $F_i$
are positive.  An isolated vertex already has $F_i=b_i>0$.

If $y_{i\to j,e}<0$, its cavity ratio $X_{i,e}$ is negative.
The identity $F_i=X_{i,e}y_{j\to i,e}>0$ then forces the reverse
message to be negative as well.  At each vertex the product of
the incoming signs equals the sign of $F_i/b_i$, hence is positive.
Every vertex is therefore incident to an even number of negative
edges.  Such subsets are precisely the elements of
$H_1(G;\F_2)$.
\end{proof}

In particular, the positive solutions form the sector
$S=\varnothing$, and messages on bridges are always positive.
The positivity of the vertex ratios in the proposition does not
assert that all edge beliefs of a nonpositive solution are
probability distributions.

\subsection{Two branches on an edge}

For the rest of this section assume $J_e\ne0$ on every edge.
We first treat the full ratios $F_i,F_j>0$ as independent variables.

\begin{lemma}[Real edge branches]
\label{real:edge-branches}
For an edge $e=\{i,j\}$ with $a_e>0$, $a_e\ne1$, and prescribed
$F_i,F_j>0$, the edge equations have exactly two real admissible
solutions: one with both messages positive, and one with both
messages negative.  Both depend real analytically on $(F_i,F_j)$,
and their edge Jacobians are invertible.
Write $y_{j\to i,e}^{\pm}$ for the incoming message on the
respective branch. Then
\begin{equation}
             y_{j\to i,e}^{+}y_{j\to i,e}^{-}=-F_i.
             \label{real:edge-product}
\end{equation}
\end{lemma}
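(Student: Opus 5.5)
We reduce the edge system to a single quadratic in the incoming message \(y_j:=y_{j\to i,e}\), with \(F_i,F_j>0\) held fixed as independent variables, and read off all the claims from its coefficients. Write \(a=a_e\) and \(y_i=y_{i\to j,e}\), and use the residuals \(R_i,R_j\) of \cref{sf:edge-equations}. The first equation gives \(y_i=(aF_i+y_j)/(F_i+ay_j)\). Substituting this into \(R_j=0\) and multiplying by \(F_i+ay_j\) should give
\[
 a(1+F_j)\,y_j^2+\bigl(F_iF_j+a^2F_i-a^2F_j-1\bigr)\,y_j-aF_i(1+F_j)=0 .
\]
The leading coefficient is positive and the constant term is negative, so the discriminant is positive. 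Hence there are two distinct real roots of opposite signs, and their product is \(-F_i\). This is \eqref{real:edge-product} for the incoming message at \(i\), once we know that both roots are admissible solutions.

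Next we justify the clearing of denominators and admissibility. If \(F_i+ay_j=0\) at a root, then the cleared form of \(R_i=0\) forces \(aF_i+y_j=0\). Combined with \(y_j=-F_i/a\), this gives \(a^2=1\), which is excluded. The symmetric argument applies to \(F_j+ay_i\). So each root of the quadratic determines a unique solution of the system, and no spurious roots arise. Messages are nonzero because the constant term is nonzero. The cavity ratio is \(X_{i,e}=F_i/y_j\), so \(X_{i,e}+a_e=0\) is equivalent to \(F_i+ay_j=0\), which we just excluded; the same holds at \(j\). Finally \(1+F_i>0\) and \(1+F_j>0\).

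For the sign pairing, the positive root \(y_j>0\) gives \(y_i=(aF_i+y_j)/(F_i+ay_j)>0\). For the other root we apply the involution \(\iota(y_i,y_j)=(-F_j/y_i,-F_i/y_j)\). By \eqref{sf:edge-involution} it maps solutions to solutions, and it sends the positive solution to one with both messages negative. By uniqueness of the negative root, that image is the negative branch. This also re-derives the product formula. Real analyticity in \((F_i,F_j)\) follows because each branch is a simple root of a quadratic whose coefficients are polynomial in \((F_i,F_j)\) and whose discriminant is positive.

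The step needing the most care is invertibility of the \(2\times2\) edge Jacobian \(\partial(R_i,R_j)/\partial(y_i,y_j)\). Our plan is a Schur complement argument. We have \(\partial R_i/\partial y_i=F_i+ay_j\ne0\), so the determinant equals this unit times the derivative in \(y_j\) of \(R_j\) restricted to the curve \(R_i=0\). That derivative is, up to the unit factor \((F_i+ay_j)^{-1}\), the derivative of the quadratic above, which is nonzero at a simple root. As a check, invertibility on the negative branch also follows from the positive branch via \eqref{sf:edge-involution}, as already noted in \cref{sec:strong-field}.
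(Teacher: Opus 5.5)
Your proof is correct. Its skeleton is the paper's: eliminate one unknown to get a quadratic whose leading and constant coefficients have opposite signs, read off two real roots of opposite sign and their product from Vieta, and obtain analyticity and the invertible edge Jacobian by elimination.

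The differences are in which unknown you solve for and how you control the negative branch. The paper eliminates to $sA_i^2+C_iA_i-F_i=0$ in the rescaled cavity ratio $A_i=X_{i,e}/a$; your quadratic is this one, up to a nonzero factor, under $A_i=F_i/(ay_j)$. The paper then places the negative root strictly between $-1$ and $-1/s$, because $P_i(-1)$ and $P_i(-1/s)$ have opposite signs. That one intermediate-value step gives both admissibility and the negativity of $A_j$, hence of the outgoing message. The paper reaches \eqref{real:edge-product} only after converting back by $y=F_i/(aA_i)$.

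You work directly in the incoming message, so \eqref{real:edge-product} is literally Vieta's formula. You rule out vanishing denominators by observing that they would force $a^2=1$. You get the sign of the outgoing message on the second branch by transporting the positive solution with the involution $\iota$ of \eqref{sf:edge-involution}.

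What each route buys:
\begin{itemize}
\item Yours is shorter. It also shows that the real negative branch is exactly the branch $y^-$ of \eqref{sf:other-branch}, so the product identity becomes a structural consequence of $\iota$.
\item The paper's stays inside the coordinates of \cref{sec:real} and does not depend on \cref{sec:strong-field}.
\end{itemize}

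One sentence needs tightening. You exclude $F_i+ay_j=0$ by arguing about solutions of $R_i=0$. But the claim that each root of the quadratic determines a solution requires the quadratic itself not to vanish at $y_j=-F_i/a$. It does not vanish there: its value is $(1-a^2)F_i(1+F_i)/a\ne0$. Alternatively, your involution argument already supplies a genuine solution over the negative root.
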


\begin{proof}
See \cref{supp:edge-branches}.
\end{proof}

Set $F_i=e^{2u_i}$ and
$z_{e,i}^{+}(u)=\tfrac12\log y_{j\to i,e}^{+}$.
On the negative branch \eqref{real:edge-product} gives
\begin{equation}
 \tfrac12\log|y_{j\to i,e}^{-}|=u_i-z_{e,i}^{+}(u).
 \label{real:log-branches}
\end{equation}
On the positive branch the cavity ratios are positive, so the
positive BP update lies strictly between $a_e$ and $a_e^{-1}$.
Consequently
\begin{equation}
                   |z_{e,i}^{+}(u)|<|J_e|
                   \qquad(u\in\R^V).
                   \label{real:edge-bound}
\end{equation}

\subsection{The global vertex map}

Fix an Eulerian subset $S$.  Write $k_i=d_S(i)$,
$D_S=\diag(1-k_i)$, and again let $\varepsilon_e=-1$ on $S$ and
$\varepsilon_e=1$ off $S$.  Define
\begin{equation}
 \mathfrak h_{S,i}(u)
       =(1-k_i)u_i-\sum_{e\ni i}\varepsilon_e z_{e,i}^{+}(u).
 \label{real:field-map}
\end{equation}
Every diagonal entry of $D_S$ is nonzero because every $k_i$ is even.

\begin{proposition}[Vertex coordinates on a real sector]
\label{real:vertex-map}
The admissible real BP solutions with negative-edge support $S$
are in one-to-one correspondence with
\begin{equation}
                    \mathfrak h_S(u)=h,\qquad u\in\R^V.
                    \label{real:vertex-equation}
\end{equation}
The correspondence preserves the local real analytic solution
spaces and nondegeneracy.  Every such solution satisfies
\begin{equation}
 |(1-k_i)u_i-h_i|\le\bar J_i,
 \qquad \bar J_i=\sum_{e\ni i}|J_e|.
 \label{real:global-box}
\end{equation}
\end{proposition}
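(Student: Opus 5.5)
The plan is to reduce an admissible real solution with negative support \(S\) to its vertex data \(u_i=\tfrac12\log F_i\), and then read the vertex consistency equations off the edge branches of \cref{real:edge-branches}.

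\emph{From solutions to vertex equations.} Start with an admissible real solution whose set of negative-message edges is \(S\). By \cref{real:sign-sectors}, every \(F_i>0\), so \(u_i=\tfrac12\log F_i\) is defined. Also, both messages on an edge share a sign. Treat \((F_i,F_j)\) as independent variables on each edge \(e=\{i,j\}\). The pair of messages on \(e\) then solves the edge equations \eqref{sf:edge-equations}. By \cref{real:edge-branches} this pair must be the positive branch if \(e\notin S\) and the negative branch if \(e\in S\). The remaining equations are the vertex consistency equations \(F_i=b_i\prod_{e\ni i}y_{j\to i,e}\). The sign of this product is \((-1)^{k_i}=1\), since \(S\) is Eulerian, so the equations may be rewritten in absolute values.

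Take half-logarithms. Use \eqref{real:log-branches} on \(S\) and \(z^+_{e,i}\) off \(S\). The vertex equation becomes
\[
 u_i=h_i+\sum_{e\notin S}z^+_{e,i}+\sum_{e\in S}\bigl(u_i-z^+_{e,i}\bigr),
\]
which rearranges exactly to \(\mathfrak h_{S,i}(u)=h_i\).

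\emph{The inverse direction.} Given \(u\) solving \eqref{real:vertex-equation}, define the messages by the chosen branches at \(F=e^{2u}\). The edge equations hold automatically, and the computation above read backwards gives the vertex equations. This produces a real BP solution. Its admissibility has three parts:
\begin{itemize}
\item messages are nonzero by construction;
\item the denominators \(X+a_e\) are nonzero because the branches are admissible by \cref{real:edge-branches};
\item \(1+F_i>0\).
\end{itemize}
The negative-message support is exactly \(S\), since the positive and negative branches have the stated signs. The two maps are mutually inverse, because \(F\) determines the branch on each edge and \(u\) is recovered from \(F\).

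\emph{Local structure and nondegeneracy.} I would argue as in the strong-field section. Adjoin \(F_i\) as variables, together with the equations defining \(F_i\). The edge Jacobians are invertible by \cref{real:edge-branches}, so the messages are locally real-analytic functions of \(F\), hence of \(u\). Eliminating them by a Schur complement leaves the vertex equations, which in the logarithmic variables are \(\mathfrak h_S(u)-h=0\); the rescaling \(F=e^{2u}\) and the log change of variables are analytic diffeomorphisms. Hence the message Jacobian of BP is invertible exactly when \(D\mathfrak h_S(u)\) is. More generally, the local analytic zero sets correspond: the implicit function theorem identifies a neighborhood of the solution in the BP equations with a neighborhood of \(u\) in the zero set of \(\mathfrak h_S-h\).

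\emph{The box bound.} From \eqref{real:field-map} we have \((1-k_i)u_i-h_i=\sum_{e\ni i}\varepsilon_e z^+_{e,i}(u)\). The bound \eqref{real:edge-bound} then gives \(|(1-k_i)u_i-h_i|<\bar J_i\), which implies the non-strict inequality \eqref{real:global-box}.

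\emph{Main obstacle.} The step needing most care is the inverse direction's admissibility and the equivalence of local scheme structures rather than just of point sets. I expect to handle it by writing the Schur-complement identity explicitly, as in the proof of \cref{sf:normal-form}. The case of an isolated vertex is trivial: there \(u_i=h_i\).
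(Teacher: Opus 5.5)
Your proposal is correct and follows essentially the same route as the paper. Both directions go through \cref{real:sign-sectors} and the forced branch choice of \cref{real:edge-branches}, the half-logarithm identity \eqref{real:log-branches} turns vertex consistency into $\mathfrak h_S(u)=h$, the local and nondegeneracy claims come from eliminating the messages with the invertible edge Jacobians, and the box bound is read off from \eqref{real:field-map} and \eqref{real:edge-bound}. The only slip is minor: the strict inequality $<\bar J_i$ fails at an isolated vertex, where both sides are zero, but the non-strict bound \eqref{real:global-box} still holds there.
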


\begin{proof}
See \cref{supp:vertex-map}.
\end{proof}

\begin{proposition}[Every sector persists for all fields]
\label{real:sector-degree}
For fixed finite nonzero $J$, the map $\mathfrak h_S:\R^V\to\R^V$
is proper and has Brouwer degree
\begin{equation}
                  \deg\mathfrak h_S=(-1)^{v(S)},
                  \label{real:degree}
\end{equation}
where $v(S)$ is the number of vertices incident to $S$.
In particular every Eulerian sector contains an admissible real
BP solution for every $h\in\R^V$, and
\begin{equation}
                         R_G(J,h)\ge2^{\nu(G)}.
                         \label{real:lower-bound}
\end{equation}
\end{proposition}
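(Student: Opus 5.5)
The plan is to homotope $\mathfrak h_S$ to its linear part $D_S u$ through a family of proper maps, and then read off the degree from the signs of the diagonal entries of $D_S$.

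\emph{Properness.} By \eqref{real:edge-bound}, each component of the perturbation $\mathfrak h_S(u)-D_Su$ satisfies
\[
 \Bigl|\sum_{e\ni i}\varepsilon_e z^+_{e,i}(u)\Bigr|\le\bar J_i ,
\]
so this perturbation is bounded uniformly on $\R^V$. The map $\mathfrak h_S$ is continuous, indeed real analytic, because Lemma~\ref{real:edge-branches} gives real analytic dependence of $y^+$ on $(F_i,F_j)=(e^{2u_i},e^{2u_j})$. Since $S$ is Eulerian, every $k_i$ is even and hence $1-k_i\ne0$. Take $\kappa=\min_i|1-k_i|\ge1$; then $|(\mathfrak h_S(u))_i|\ge|1-k_i||u_i|-\bar J_i$. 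It follows that preimages of bounded sets are bounded, and closed by continuity, so $\mathfrak h_S$ is proper. This is the same estimate as \eqref{real:global-box}.

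\emph{Degree.} Consider the straight-line homotopy
\[
 H_\lambda(u)=D_Su-\lambda\sum_{e\ni i}\varepsilon_e z^+_{e,i}(u),\qquad \lambda\in[0,1].
\]
The bound above holds uniformly in $\lambda$, so $H$ is a proper homotopy $\R^V\times[0,1]\to\R^V$. Because the Brouwer degree of proper maps $\R^V\to\R^V$ is invariant under proper homotopy, $\deg\mathfrak h_S=\deg D_S=\sgn\det D_S=\prod_i\sgn(1-k_i)$. At inactive vertices $k_i=0$, so the factor is $+1$. At vertices of $S$, $k_i\ge2$, so $1-k_i<0$. The product is therefore $(-1)^{v(S)}$. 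The one point to check carefully is that degree is well defined and homotopy invariant for proper maps of $\R^V$; passing to one-point compactifications $S^n\to S^n$ settles it.

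\emph{Existence and the count.} A proper map of nonzero degree is surjective. If some $h$ were missed, $\mathfrak h_S$ would factor through $\R^V\setminus\{h\}$, and the compactified map would be nullhomotopic, contradicting nonzero degree. Hence for every $h$ there is $u$ with $\mathfrak h_S(u)=h$, and Proposition~\ref{real:vertex-map} turns this $u$ into an admissible real BP solution whose negative-edge support is exactly $S$.

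Solutions with different supports are distinct, so summing over Eulerian sets gives at least $|H_1(G;\F_2)|=2^{\nu(G)}$ distinct solutions. Proposition~\ref{real:sign-sectors} identifies Eulerian subsets with the elements of $H_1(G;\F_2)$, which has dimension $\nu(G)$. This proves \eqref{real:lower-bound}.

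The main obstacle is the uniform boundedness of $z^+$. It is already given by \eqref{real:edge-bound}, but it relies on the positive branch existing and varying analytically for all $u$, which is Lemma~\ref{real:edge-branches}. With that in hand, the remaining steps are routine degree theory.
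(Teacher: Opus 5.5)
Your proposal is correct and follows essentially the same route as the paper: the straight-line homotopy $D_Su-t\Psi_S(u)$ is made proper by the uniform bound from \eqref{real:edge-bound}, which gives $\deg\mathfrak h_S=\sgn\det D_S=(-1)^{v(S)}$; nonzero degree then forces a root for every $h$, and Proposition~\ref{real:vertex-map} together with the count of Eulerian sets gives the bound. The only difference is cosmetic: you define and compare degrees through the one-point compactification, whereas the paper normalizes $H_t-h$ on the boundary of a large ball, applies Milnor's homotopy invariance for sphere maps, and treats $|V|\le1$ separately.
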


\begin{proof}
If $|V|\leq1$, the loopless graph has no edges and the map is the
identity on $\R^V$, of degree one. Assume henceforth that $|V|\geq2$.
Write $\mathfrak h_S(u)=D_Su-\Psi_S(u)$.
Equation~\eqref{real:edge-bound} gives
$|\Psi_{S,i}(u)|\le\bar J_i$ for all $u$.
Since $|1-k_i|\ge1$, the homotopy
\[
                   H_t(u)=D_Su-t\Psi_S(u),\qquad 0\le t\le1,
\]
satisfies
$\|H_t(u)\|_\infty\ge\|u\|_\infty-\max_i\bar J_i$ uniformly in $t$.
It is therefore a proper homotopy from the invertible linear map
$D_S$ to $\mathfrak h_S$. For every fixed $h$, all zeros of
$H_t-h$, uniformly in $t$, lie in one sufficiently large ball.
The normalized maps $(H_t-h)/\|H_t-h\|$ on its boundary form a
homotopy of spheres. At $t=0$, removing the translation by $h$
gives the sphere map induced by $D_S$, of degree $\sgn\det D_S$.
Homotopy invariance for compact oriented manifolds
\cite[\S5, Theorem B]{milnor1965differentiable} therefore gives
$\deg\mathfrak h_S=\sgn\det D_S$.
Exactly the $v(S)$ active vertices have negative diagonal entries,
which proves \eqref{real:degree}.
If $\mathfrak h_S-h$ had no zero, its normalization would extend
from the boundary over the whole ball and would have degree zero
\cite[\S5, Lemma 1]{milnor1965differentiable}, a contradiction.
Proposition~\ref{real:vertex-map} supplies a root
in each sector.  The sectors are disjoint and
$|H_1(G;\F_2)|=2^{m-n+c(G)}$, proving \eqref{real:lower-bound}.
\end{proof}

\begin{corollary}[Odd sector counts]
\label{real:sector-parity}
Fix finite nonzero $J$ and an Eulerian $S$. If every admissible real
root with negative-edge support $S$ is nondegenerate, then these roots
are finite in number and that number is odd. In particular, whenever all
positive roots are nondegenerate, their number is odd.
\end{corollary}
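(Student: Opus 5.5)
The plan is to combine the sector correspondence of \cref{real:vertex-map} with the degree computation of \cref{real:sector-degree}, using the standard relation between Brouwer degree and signed counts of regular preimages. Fix finite nonzero \(J\), an Eulerian \(S\), and \(h\in\R^V\). By \cref{real:vertex-map}, the admissible real roots with negative-edge support \(S\) are in bijection with \(\mathfrak h_S^{-1}(h)\subset\R^V\). The same correspondence preserves local real analytic solution spaces and nondegeneracy. So the hypothesis says that every \(u\in\mathfrak h_S^{-1}(h)\) is a regular point of \(\mathfrak h_S\), meaning \(d\mathfrak h_S(u)\) is invertible. I would state explicitly that a nondegenerate BP root means one with invertible message Jacobian. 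Through the Schur-complement elimination of the edge branches in \cref{real:edge-branches}, whose edge Jacobians are invertible, this is equivalent to invertibility of \(d\mathfrak h_S\).

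\emph{Finiteness.} By \cref{real:sector-degree}, \(\mathfrak h_S\) is proper, so \(\mathfrak h_S^{-1}(h)\) is compact; the box bound \eqref{real:global-box} even confines it to an explicit box. Each regular preimage is isolated by the inverse function theorem, since \(\mathfrak h_S\) is real analytic and hence \(C^1\). A compact set of isolated points is finite.

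\emph{Parity.} For a proper smooth map \(\R^V\to\R^V\) and a regular value \(h\), the Brouwer degree equals \(\sum_{u\in\mathfrak h_S^{-1}(h)}\sgn\det d\mathfrak h_S(u)\). This is the same degree used in \cref{real:sector-degree}: the sphere-map degree on a large ball containing all preimages equals the local degree sum, by \cite[\S5]{milnor1965differentiable}, after excising small balls around the preimages. Hence the number of preimages is congruent mod \(2\) to \(\deg\mathfrak h_S=(-1)^{v(S)}=\pm1\), so it is odd. The positive-root statement is the case \(S=\varnothing\). There, \cref{real:sign-sectors} shows that positive roots are exactly the sector with empty negative support.

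The main obstacle is the identification of nondegeneracy on the two sides, that is, checking that the correspondence of \cref{real:vertex-map} really transports regularity of \(d\mathfrak h_S\) to invertibility of the BP message Jacobian. \Cref{real:vertex-map} asserts this, so I would cite it, and only remark on the Schur-complement structure. Also, \(h\) is a regular value only in the sense that all preimages are regular, and this is exactly what the degree formula requires.
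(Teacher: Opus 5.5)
Your proposal is correct and follows the paper's proof essentially step for step. You use \cref{real:vertex-map} to turn nondegeneracy into $h$ being a regular value of $\mathfrak h_S$, properness to get a finite fiber, and the signed count of preimages equal to $\deg\mathfrak h_S=(-1)^{v(S)}=\pm1$ to force an odd number of terms; the only difference is that the paper cites Milnor's \S6 for the local-index sum, which is the precise reference for the excision argument you describe.
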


\begin{proof}
By \cref{real:vertex-map} the hypothesis says that $h$ is a regular
value of $\mathfrak h_S$. Its fiber is discrete and, by properness,
finite, and the degree of $\mathfrak h_S-h$ on a large ball equals the
sum of $\sgn\det D\mathfrak h_S$ over the fiber
\cite[\S6]{milnor1965differentiable}. This sum is $\pm1$ by
\eqref{real:degree}, so it has an odd number of terms. The degree
concerns the vertex-field map and asserts nothing about the stability
of the BP iteration.
\end{proof}

\subsection{Attainment at strong fields}

\begin{proposition}[The strong-field real chamber]
\label{real:strong-field-count}
Fix finite nonzero real couplings $J$ and field signs
$\eta_i\in\{\pm1\}$.  If every $\eta_i h_i$ is sufficiently large,
then the admissible complex fiber consists of $Q(G)$ simple points.
Exactly $2^{\nu(G)}$ of them are real, one in each Eulerian sector,
and exactly one has all messages positive.
\end{proposition}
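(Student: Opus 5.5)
The plan is to combine the complex exhaustion of \cref{sf:exhaustion} with the real sector analysis of \cref{real:vertex-map,real:sector-degree}. Large field magnitudes in a fixed orthant correspond to small nonzero \(p_i=b_i^{-1}=e^{-2h_i}\) when \(\eta_i=1\). When \(\eta_i=-1\), \(b_i\to0\), and I will first apply the global spin flip \eqref{eq:spin-gauge} with \(\epsilon_i=\eta_i\). This flip sends \(b_i\mapsto b_i^{\epsilon_i}\) and \(a_e\mapsto a_e^{\epsilon_i\epsilon_j}\). It preserves real positive parameters, \(a_e^2\ne1\), admissibility, and the message signs, since \(y\mapsto y^{\pm1}\). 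It therefore reduces everything to \(\eta\equiv1\). Because \(J_e\ne0\) gives \(a_e>0\) with \(a_e^2\ne1\), \cref{sf:exhaustion} then gives, for all sufficiently large \(\eta_ih_i\), exactly \(Q(G)\) simple admissible complex points. These are the points produced by the state charts of \cref{sf:normal-form}.

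Next I count the real points among these. By \cref{real:sign-sectors}, every real admissible root has an Eulerian negative support. By \cref{real:sector-degree}, every Eulerian sector contains at least one real root, so there are at least \(2^{\nu(G)}\) real roots, one per Eulerian \(S\). For the upper bound, I will identify each real root with a point of a state chart. Complex conjugation permutes the \(Q(G)\) simple points. In each chart the messages tending to zero are exactly those on \(S\), so conjugation preserves the state labels, and a real root must lie in the chart of its own negative support \(S\). It remains to show that each chart contains at most one real point when \(S\) is Eulerian, and none when \(S\) is not Eulerian, in which case the sector is empty by \cref{real:sign-sectors}.

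The key step is the power map \(p_i=(-1)^{k_i}w_i^{r_i}\). For Eulerian \(S\), every \(k_i\) is even, so every \(r_i=k_i-1\) is odd, or equals \(1\) at inactive vertices. The coordinates \(w_i\) can be chosen real: the units \(C_{S,i}\) are real and positive at real points near the boundary, by the sign bookkeeping in \eqref{sf:units} with \(a_e>0\), so real \(r_i\)th roots exist. Then each equation \(w_i^{r_i}=p_i\) with odd \(r_i\) has exactly one real solution. This gives at most one real point per chart, and hence exactly one by the lower bound, for a total of \(2^{\nu(G)}\). The main obstacle is making this reality argument rigorous: I need the chart coordinate change \eqref{sf:coordinate-change} to commute with complex conjugation when \(a,p\) are real, and the conjugate of a chart point to lie again in the same chart. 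I would handle this by choosing the real branch of \(C^{-1/r_i}\) and invoking uniqueness in the implicit function theorem. As an alternative, the count per sector could be taken from \cref{real:vertex-map}: at strong field \(\mathfrak h_S\) is a bounded perturbation of \(D_S\), and nondegeneracy plus odd degree (\cref{real:sector-parity}) together with the chart bound gives exactly one.

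Finally, the positive roots form the sector \(S=\varnothing\), which has weight \(1\) and so contributes exactly one real root. By \cref{real:sign-sectors}, this root is the unique solution with all messages positive, so \(N_+=1\).
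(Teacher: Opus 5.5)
Your proof is correct, and its skeleton is the paper's. You reduce to $\eta\equiv1$ by the spin change; the paper also checks that this change is an analytic isomorphism of admissible fibers, so simplicity transfers. You then use \cref{sf:exhaustion} to place every complex root in a state chart, make \eqref{sf:coordinate-change} commute with conjugation by taking the positive real root of the positive unit $C_{S,i}$, and count real solutions of $p_i=(-1)^{k_i}w_i^{r_i}$.

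The difference is where the real sectors come from. The paper stays inside the chart model and uses both directions of ``real root $\Leftrightarrow$ real $w$''. A non-Eulerian state has a vertex with $r_i$ even and $w_i^{r_i}=-p_i<0$, so it has no real points. For an Eulerian state the unique real solution has all $w_i>0$, hence $f_i>0$, hence messages positive off $S$ and negative on $S$. So existence, the sign pattern and the upper bound all come from the power map alone, without \cref{real:sign-sectors} or \cref{real:sector-degree}.

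You use only ``real root $\Rightarrow$ real $w$'', for the upper bound. Existence comes from the Brouwer-degree bound, and non-Eulerian charts are excluded by the Eulerian-sign result. This is legitimate, since those propositions are proved independently of this one. The cost is that the strong-field count then depends on the global analysis of \cref{sec:real}, which the paper's argument shows is not needed in this chamber.

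One step needs an explicit argument. Knowing which messages tend to zero says nothing about their signs, so ``a real root lies in the chart of its own negative support'' does not yet follow from what you wrote. It can be filled as follows. At a real root in the chart of $S'$, the messages off $S'$ are near $a_e^{\pm1}>0$. On $S'$, the quantity $y^+_{i\to j,e}=-F_j/y^-_{i\to j,e}$ is real and close to a positive limit. Since $F_j>0$ by \cref{real:sign-sectors}, this forces $y^-_{i\to j,e}<0$. Hence the negative support is exactly $S'$, and $S'$ must then be Eulerian. With this added, your count of one real root per Eulerian chart, and $N_+=1$ from $S=\varnothing$, goes through.
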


\begin{proof}
First take $h_i\to+\infty$ at every vertex, so $p_i>0$ tends to zero.
For $a_e>0$, each unit $C_{S,i}(0)$ in \eqref{sf:units} is positive.
Choose its root in \eqref{sf:coordinate-change} to be real and
positive at zero.  The coordinate change and its inverse then
commute with complex conjugation.
If some $k_i$ is odd, $r_i=k_i-1$ is even and
\eqref{sf:power-map} requires $w_i^{r_i}=-p_i<0$;
that state has no real solutions.
If every $k_i$ is even, every $r_i$ is odd and the power equations
have exactly one real solution, with all $w_i>0$.
The inverse coordinate change has positive unit factors on a small
real neighborhood, so all $f_i>0$ at that solution.
The selected edge branches are consequently positive off $S$ and
negative on $S$.

A solution in one of these charts is real if and only if its
$f$-coordinates, and hence its $w$-coordinates, are real: the full
ratios are recovered from the messages and the real parameters,
and the reconstruction is real analytic.
Corollary~\ref{sf:exhaustion} has already shown that the charts
exhaust the complex fiber.
Thus the real roots are exactly the one root from each Eulerian
state, and the unique positive root is the state $S=\varnothing$.
All are simple by the same corollary.

For arbitrary field signs use the spin change
\begin{equation}
 J'_e=\eta_i\eta_jJ_e\quad(e=\{i,j\}),\qquad
 h'_i=\eta_i h_i,\qquad
 y'_{j\to i,e}=y_{j\to i,e}^{\eta_i},\qquad
 F'_i=F_i^{\eta_i}.
 \label{real:spin-gauge}
\end{equation}
To check the BP equations directly, write
$g_a(X)=(aX+1)/(X+a)$ and observe that
$g_{a^{st}}(X^s)=g_a(X)^t$ for $s,t\in\{\pm1\}$.
The transformed cavity is $X_{i,e}^{\eta_i}$, which proves the
update identity.  New numerator and denominator factors are old
ones multiplied by units or interchanged; both are nonzero on
the admissible set.  Also $1+F_i^{-1}=(1+F_i)/F_i$.
Thus \eqref{real:spin-gauge} is an analytic isomorphism of the
complex admissible fibers and preserves simplicity.
It preserves real message signs as well.  Applying the all-positive
field case to $(J',h')$ proves the proposition.
\end{proof}

\begin{remark}[Generator states are the all-real states]
\label{real:all-real-states}
Fix $a_e>0$, $a_e\ne1$, and small $p_i>0$ as in the proof of
\cref{real:strong-field-count}. A contributing state $S$ has exactly one
real root among its $\wt(S)$ complex roots if every $d_S(i)$ is even
and none otherwise, so all its roots are real if and only if every
$d_S(i)\in\{0,2\}$, that is, if and only if $S$ indexes a module
generator in \cref{alg:cycle-product}. Sending such a state to its
unique root is a canonical injection from the generator states into the
real roots, bijective exactly under the condition of
\cref{alg:generator-real-bridge}; the other real roots come from
Eulerian states with a vertex of degree at least four, each with
$\wt(S)\geq3$ complex roots of which one is real (\cref{sf:k4-states}).
The spin change \eqref{real:spin-gauge} transports this to every field
orthant.
\end{remark}

\begin{proof}[Proof of Theorem~\ref{thm:real}]
The statewise analytic model and its local lengths are
Theorem~\ref{sf:normal-form} and Proposition~\ref{sf:boundary-algebra};
Corollary~\ref{sf:exhaustion} gives completeness of the complex
count.  For each fixed finite nonzero real $J$,
Proposition~\ref{real:sector-degree} gives the lower bound
$R_G(J,h)\ge2^{\nu(G)}$ for every $h$, without a nondegeneracy
assumption.  Proposition~\ref{real:strong-field-count} attains this
bound in every sufficiently strong-field orthant, with
$(N_{\C},R_G,N_+)=(Q,2^{\nu(G)},1)$ and all roots simple.
Therefore
\begin{equation}
              \min_{h\in\R^V}R_G(J,h)=2^{\nu(G)}.
              \label{real:minimum}
\end{equation}
\end{proof}

\begin{corollary}[A degree budget for positive solutions]
\label{real:positive-bound}
At a physical parameter point with all $J_e\ne0$ in the generic
finite \textup{\'{e}}tale locus of the BP family,
\begin{equation}
  2^{\nu(G)}\le R_G(J,h)\le Q(G),\qquad
  1\le N_+(J,h)\le Q(G)-2^{\nu(G)}+1.
  \label{real:positive-budget}
\end{equation}
Moreover
\begin{equation}\label{real:parity}
 R_G(J,h)\equiv Q(G)\equiv2^{\nu(G)}\pmod2.
\end{equation}
\end{corollary}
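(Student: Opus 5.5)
On the generic finite \'etale locus the complex admissible fiber has exactly $N(G)=Q(G)$ points, by \cref{prop:bp-family} and \cref{thm:degree}, and all of them are simple. The plan is to combine this exact count with the real-sector analysis of \cref{real:sector-degree} and with complex conjugation. Throughout, the parameters are physical, $a_e=e^{2J_e}>0$, $b_i=e^{2h_i}>0$, and $J_e\ne0$, so the standing hypotheses of \cref{sec:real} hold.

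\emph{Bounds on $R_G$.} The lower bound $R_G(J,h)\ge2^{\nu(G)}$ is \cref{real:sector-degree}. The upper bound holds because real admissible roots are a subset of the $Q(G)$ complex admissible roots. One point needs checking: the convention that $R_G$ allows the value $\infty$. I would note that the admissible localization excludes $y_\alpha=0$ and $y_\alpha=\infty$, and the spin change \eqref{real:spin-gauge} identifies the fibers. So each real root counted by $R_G$ is a point of the admissible complex fiber of some spin-equivalent chart, and distinct roots remain distinct. This gives $R_G\le Q(G)$.

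\emph{Positive roots.} We have $N_+\ge1$ because the sector $S=\varnothing$ of \cref{real:sector-degree} contains a root, and its messages are all positive by \cref{real:sign-sectors}. For the upper bound, each of the $2^{\nu(G)}-1$ nonempty Eulerian sectors contributes at least one real root that is not positive. These roots are disjoint from the positive sector, so
\[
N_+\le R_G-(2^{\nu(G)}-1)\le Q(G)-2^{\nu(G)}+1 .
\]

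\emph{Parity.} Because the parameters are real, the BP equations and the admissibility condition $D\ne0$ have real coefficients. Complex conjugation therefore acts as an involution on the finite fiber of $Q(G)$ points, and its fixed points are exactly the real roots, since real roots are those with all coordinates real. The non-real roots come in conjugate pairs, so $R_G\equiv Q(G)\pmod 2$. The remaining congruence $Q(G)\equiv2^{\nu(G)}$ is independent of $h$. Two routes to it:
\begin{itemize}
\item Evaluate $R_G\equiv Q$ at a point of the generic locus lying in the strong-field chamber, where \cref{real:strong-field-count} gives $R_G=2^{\nu(G)}$. Such a point exists because the generic locus is Zariski open, hence dense in the real strong-field orthant region.
\item Argue combinatorially: each state contributes $\wt(S)=\prod(d_S(i)-1)$, which is odd exactly when all degrees are even, i.e.\ when $S$ is Eulerian. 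Hence $Q\equiv\#\{\text{Eulerian }S\}=2^{\nu}\pmod 2$.
\end{itemize}
I would use the second route, since it is clean and self-contained.

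\emph{Main obstacle.} The delicate step is the bookkeeping around $\infty$ and conjugation. One must make sure that ``real'' for a root involving the value $\infty$ still corresponds to a conjugation-fixed point in some chart, and that the conjugation involution preserves the admissible set, so that the pairing argument is valid. I expect the spin-change isomorphism of \eqref{real:spin-gauge} to handle this, since it commutes with conjugation for real $J,h$.
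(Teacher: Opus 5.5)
Your proposal is correct and follows the paper's proof step for step. Both arguments use the same ingredients: $Q(G)$ reduced complex points on the \'etale locus, at least one real root per Eulerian sector from \cref{real:sector-degree} (nonpositive for nonempty sectors, positive for the empty one), conjugation pairing for $R_G\equiv Q(G)$, and the parity of $\wt(S)$ for $Q(G)\equiv 2^{\nu(G)}$.

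The detour about $\infty$ and spin changes is unnecessary, and slightly off, since the spin change relates the fibers at $(J,h)$ and $(J',h')$ rather than two charts of one fiber. Admissible roots already have finite nonzero messages in the ratio coordinates, so the real roots are simply a subset of the $Q(G)$ complex ones.
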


\begin{proof}
See \cref{supp:positive-bound}.
\end{proof}

Thus generic real counts lie among the integers between $2^{\nu(G)}$
and $Q(G)$ with the parity of $Q(G)$, a class containing both
endpoints; not every such integer need occur.

The nonzero-coupling hypothesis is essential for the bound with the cycle
rank of \(G\): an edge with \(J_e=0\) carries the messages \(1\) on the
admissible real set and can be deleted without changing that set, so the
lower bound then uses the cycle rank of the support graph
(\cref{supp:zero-couplings}).

%% file: sections/discussion.tex
\section*{Discussion}
\addcontentsline{toc}{section}{Discussion}

The two constructions organize the same state sum at different levels:
the edge-incidence ring retains degrees, products, and graph maps, while
the strong-field charts realize each state weight as the length of a BP
boundary algebra and decide which states have real branches;
\cref{alg:generator-real-bridge,real:all-real-states} locate the
discrepancy between module generators and real branches in the Eulerian
states with a vertex of degree at least four. Several questions remain:
graph descriptions of the higher Tor products and of the resolution
differentials; a comparison with the BP boundary that respects
multiplication and deformations (\cref{sf:serre-comparison});
naturality for edge contractions; and the largest generic real and
positive counts at fixed \(J\), with the chambers realizing the
intermediate values allowed by \cref{real:positive-bound}.

%% file: sections/declarations.tex
\section*{Declarations}
Large language model tools were used in drafting, reorganizing, and
checking the proofs and the text of this paper and in preparing the
computational checks cited in \cref{sec:incidence}. The author assumes
responsibility for all content.

%% file: supplement/supplement.tex

\section{Notation}
\label{supp:notation}

The following table collects the recurring notation of the main text.
\begin{center}
\begin{tabular}{@{}ll@{}}
\toprule
Symbol & Meaning\\
\midrule
\(\wt(S),Q(G)\) & State weight and its sum\\
\(\theta_G,\omega_G,\romega_G\) & Graph polynomials and the tree-reduced form \(\omega_G/(1-t)^{\tau}\)\\
\(\beta_{p,q}^{S_G}(\U_G)\) & Graded Betti numbers over the coefficient ring\\
\(a_e,b_i;\ s_e,\zeta_i\) & BP parameters and their rescaled versions in \cref{sec:degree}\\
\(y_{i\to j,e},F_i\) & Oriented message and full vertex ratio\\
\(\U_G,A_G,\mathcal B_{S,0}\) & Edge-incidence ring, Artin section, boundary local algebra\\
\(w_i,r_i\) & Boundary coordinate and its power-map exponent\\
\(N(G),R_G(J,h),N_+(J,h)\) & Generic complex, distinct real, and positive root counts\\
\bottomrule
\end{tabular}
\end{center}

\section{Supplementary material for the degree theorem}
\label{supp:degree-section}

\subsection{Elementary properties of the degree}
\label{supp:elementary-degree}

\begin{proof}[Proof of \cref{cor:elementary-degree}]
The state sum factors over connected components. A selected leaf edge
has a degree-one endpoint and contributes zero. At the new vertex
of a subdivided edge, only selecting both new edges or neither has
nonzero weight, and both local weights are one. Removing leaves
therefore reduces a unicyclic component to a cycle, where only the
empty and full edge sets contribute.
\end{proof}

\subsection{The non-normalizable locus}
\label{supp:nonnormalizable}

Let \(G\) be connected and consider solutions of the rational BP
equations with nonzero messages and nonzero update denominators, but
without the normalization condition. The identity
\((1+F_i)(X_{j,e}+a_e)=(1+F_j)(X_{i,e}+a_e)\) from the proof of
\cref{prop:bp-family} shows that \(1+F_i=0\) forces \(1+F_j=0\) across
every edge; on a connected graph either all \(1+F_i\) are nonzero or
all \(F_i\) equal \(-1\). \Cref{ex:triangle-admissible} exhibits the
second case for a triangle.

\begin{proposition}
\label{supp:nonnormalizable-dim}
Let \(G\) be connected with \(a_e^2\ne1\) for all \(e\). For generic
\((a,b)\), the locus of solutions with \(F_i=-1\) at every vertex is
empty if \(G\) is a tree and is a nonempty smooth variety of pure
dimension \(\nu(G)-1\) if \(\nu(G)\ge1\).
\end{proposition}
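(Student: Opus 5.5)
The plan is to rewrite the locus \(F_i\equiv-1\) as the fiber of an explicit map from a product of edge curves to \((\C^*)^V\), and then to apply generic smoothness to that map. The tree case will follow from a dimension count, and the case \(\nu(G)\ge1\) from a rank computation of the logarithmic Jacobian.

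\emph{Step 1: one equation per edge.} Substitute \(F_i=F_j=-1\) into the edge equations \eqref{sf:edge-equations}. Write \(u=y_{i\to j,e}\), \(u'=y_{j\to i,e}\) and \(a=a_e\). Both equations then become the single symmetric relation
\[
 a\,uu'-u-u'+a=0,\qquad\text{that is,}\qquad u'=\frac{u-a}{au-1}.
\]
So each edge contributes an irreducible curve \(\Gamma_e\), parametrized by \(u\) ranging over \(\C\) minus finitely many points. Here we use the cavity ratio \(X_{i,e}=F_i/y_{j\to i,e}=-1/u'\). The excluded points are those where a message vanishes or has a pole, and those where an update denominator \(X_{\cdot,e}+a_e\) vanishes (equivalently \(u'=1/a\) or \(u=1/a\)). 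Since \(a^2\ne1\), the Möbius map is nondegenerate, and \(\Gamma_e\) is a smooth curve isomorphic to a punctured line. The only remaining equations are the vertex equations \(b_i\prod_{e\ni i}y_{\to i,e}=F_i=-1\). The locus is therefore \(\Phi^{-1}(-1/b)\) for the morphism
\[
 \Phi:\prod_{e\in E}\Gamma_e\longrightarrow(\C^*)^V,\qquad
 \Phi_i=\prod_{e\ni i}y_{\to i,e}.
\]

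\emph{Step 2: generic fibers.} The source is smooth and irreducible of dimension \(m\). By generic smoothness in characteristic zero, a generic fiber of \(\Phi\) is either empty or smooth of pure dimension \(m-\dim\overline{\Phi(\cdot)}\). If \(G\) is a tree, then \(m=n-1<n\), so \(\Phi\) is not dominant, and for \(b\) off the closure of the image the locus is empty. If \(\nu\ge1\), it suffices to show that \(\Phi\) is dominant. In that case a generic fiber is nonempty, smooth, and of pure dimension \(m-n=\nu-1\), using connectedness (\(c=1\)). Genericity in \(a\) is also allowed, since we may shrink to an open set in the joint \((a,b)\) space.

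\emph{Step 3: dominance, the main obstacle.} We show that the logarithmic Jacobian of \(\Phi\) has rank \(n\) at some point. In the column of edge \(e=\{i,j\}\), parametrized by \(u=y_{i\to j,e}\), the entry at \(j\) is \(d\log u/du=1/u\). The entry at \(i\) is
\[
 \frac{d}{du}\log\frac{u-a}{au-1}=\frac{a^2-1}{(u-a)(au-1)},
\]
which is nonzero because \(a^2\ne1\). The Jacobian is thus a weighted incidence matrix: the column of \(e\) has nonzero entries \(\alpha_e\) at \(i\) and \(\beta_e\) at \(j\), and the ratio \(\alpha_e/\beta_e=u(a^2-1)/((u-a)(au-1))\) is a nonconstant function of \(u\).

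Now choose a connected spanning unicyclic subgraph, that is, a spanning tree together with one extra edge closing a cycle. Its \(n\times n\) column submatrix has determinant equal, up to sign and nonzero tree factors, to \(\prod_{\text{cycle}}\beta_e\pm\prod_{\text{cycle}}\alpha_e\). This is obtained by peeling leaves, which leaves the cycle's circulant-type determinant. Because the ratios \(\alpha_e/\beta_e\) vary independently with the edge parameters \(u_e\), this determinant is not identically zero on the product of curves. Hence \(d\Phi\) has full rank \(n\) at a general point, and \(\Phi\) is dominant.

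I expect the care to lie in this determinant: getting the sign pattern right, and making sure the peeled tree entries are nonzero so that the cycle factor governs the determinant. A fallback is to pick \(u_e\) on cycle edges so that \(\prod\alpha_e/\beta_e\ne\pm1\) directly.
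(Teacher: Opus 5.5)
Your proposal is correct and follows essentially the paper's route: the edge relation \(auu'-u-u'+a=0\) as a M\"obius involution, one message coordinate per edge on \(\C\setminus\{0,a_e,a_e^{-1}\}\), generic smoothness, a dimension count for trees, and dominance from a spanning unicyclic minor of the logarithmic Jacobian, reduced by leaf peeling to a cycle determinant. There are two small differences. The paper works with the joint map \((a,x)\mapsto(a,b)\) and evaluates the minor at the explicit point \(a_e=3\), \(x_e=1\), obtaining \(\pm2^{n-\ell}(2^\ell-1)\). You instead fix \(a\) and use that \(\alpha_e/\beta_e\) is nonconstant in \(u\), which gives the conclusion for every fixed \(a\) with \(a_e^2\ne1\). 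The joint genericity in \((a,b)\), which you only mention in passing, follows because the Jacobian of \((a,u)\mapsto(a,\Phi_a(u))\) is block triangular with the identity in the \(a\)-block.
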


\begin{proof}
Orient each edge \(e=i\to j\) and put \(x_e=y_{j\to i,e}\). If
\(F_i=F_j=-1\), the cavity ratios on \(e\) are \(X_{i,e}=-1/x_e\) and
\(X_{j,e}=-1/y_{i\to j,e}\), and the update equation for \(y_{i\to j,e}\)
reads \(y_{i\to j,e}+x_e=a_e(1+y_{i\to j,e}x_e)\), that is,
\(y_{i\to j,e}=h_{a_e}(x_e)\) with \(h_a(x)=(a-x)/(1-ax)\). On the open
set where \(a_e\ne0\), \(a_e^2\ne1\), and \(x_e\ne0,a_e,a_e^{-1}\), the
map \(h_{a_e}\) is an involution with nonzero values, and
\(a_eh_{a_e}(x_e)-1=(a_e^2-1)/(1-a_ex_e)\ne0\), so both update
denominators on \(e\) are nonzero and the reverse update holds. Setting
\(b_i=-\bigl(\prod_{e\ni i}y_{\mathrm{other}\to i,e}\bigr)^{-1}\) gives
\(F_i=-1\) and all BP equations, and every solution with \(F\equiv-1\)
arises uniquely in this way. The total space of such solutions over all
parameters is therefore an open subset of the torus with coordinates
\((a_e,x_e)\), smooth and irreducible of dimension \(2m\), and the
parameter map is \((a,x)\mapsto(a,b(a,x))\).

If \(\nu(G)\ge1\), this map is dominant: its differential has rank
\(m+n\) at the point \(a_e=3\), \(x_e=1\) for all \(e\), where \(h_3(1)=-1\) and
\(b_i=\pm1\). Indeed, \(\partial\log h_a(x)/\partial\log x=
x(a^2-1)/((1-ax)(a-x))=-2\) there, so the matrix
\(\partial\log b/\partial\log x\) has, in the column of \(e=i\to j\), the
entry \(-1\) in row \(i\), the entry \(2\) in row \(j\), and zeros
elsewhere. Choose a spanning tree and one further edge, giving a
spanning subgraph with a single cycle of length \(\ell\ge2\); orient the
cycle cyclically and the tree edges away from it. In the corresponding
\(n\times n\) minor, each pendant vertex is the head of its unique edge
and its row has the single entry \(2\); expanding along these rows
removes a factor \(\pm2\) each and leaves the \(\ell\times\ell\) cycle
block \(2P-I\) for a cyclic permutation matrix \(P\), whose determinant
is \(\pm(2^\ell-1)\). The minor equals \(\pm2^{n-\ell}(2^\ell-1)\ne0\),
and together with the identity on the \(a\)-coordinates the rank is
\(m+n\).

For \(\nu(G)\ge1\), dominance implies that the image contains a nonempty
open set. Shrinking this open set, generic smoothness in characteristic
zero makes the map smooth of relative dimension \(2m-(m+n)=\nu(G)-1\).
For a tree, \(2m<m+n\), so the closure
of the image is a proper subset and the generic fiber is empty.
\end{proof}

\section{Supplementary proofs for the edge-incidence ring}
\label{supp:ring}

\subsection{The incidence scheme as a vector bundle}
\label{supp:vector-bundle}

\begin{proof}[Proof of \cref{alg:incidence-vector-bundle}]
The evaluation map
\[
 \mathbb B_G\otimes\cO_{X_G}\longrightarrow F_G
\]
is surjective, since every edge bundle \(L_e\) is globally generated.
Its kernel is locally free of rank \(4m-m=3m\), and its total space
is exactly \(Z_G\).  In particular, the universal edge equations form
a regular sequence on \(X_G\times\mathbb B_G\): locally, after
splitting the evaluation map, they are \(m\) independent linear
coordinates in the coefficient directions.

The scheme \(Z_G\) is closed in \(X_G\times\mathbb B_G\), and
\(X_G\) is projective.  Hence \(\pi_G\) is proper and
\((\pi_G)_*\cO_{Z_G}\) is coherent, proving module finiteness over
the affine coefficient space.  A function of coefficient degree zero
is a function on the zero section \(X_G\); these functions form \(\C\).
\end{proof}

\subsection{Cohen--Macaulayness at every prime}
\label{supp:cm-localization}

The proof of \cref{alg:normal-cm} established the equality of depth and
dimension at the homogeneous maximal ideal. The following direct
localization argument gives the Cohen--Macaulay property at every prime
without appealing to the graded reduction.
For completeness, this also gives Cohen--Macaulayness away from the
homogeneous maximal ideal.  For every prime \(\mathfrak p\supseteq I\),
localizing the same resolution yields
\[
 \depth_{(S_G)_{\mathfrak p}}(\mathscr U_G)_{\mathfrak p}
 \geq\operatorname{ht}\mathfrak p-\rho
 =\dim(\mathscr U_G)_{\mathfrak p}.
\]
The final equality is the dimension formula for the prime \(I\) in
a polynomial ring.  Thus \(\mathscr U_G\) is a Cohen--Macaulay
\(S_G\)-module, equivalently a Cohen--Macaulay finite
\(S_G\)-algebra.  The depth inequality used here follows successively
from the depth lemma \cite[Tag~00LX]{stacks2026cohomology}.

\subsection{Functoriality}
\label{supp:functoriality}

\begin{proof}[Proof of \cref{alg:functoriality}]
A coefficient on an image edge of \(H\) can be copied to every edge
of \(G\) above it, renaming the two endpoint coordinates and using
the map of orientation lines.  This defines a linear map
\(\beta_\phi:\mathbb B_H\to\mathbb B_G\).  In oriented coordinates,
the copied sections satisfy
\[
 a_\phi^*b_e=\epsilon_\phi(e)b_{\phi(e)}.
\]
The pair \((a_\phi,\beta_\phi)\) therefore maps \(Z_H\) into
\(Z_G\).  Pullback of regular functions gives \(\mathscr U_\phi\).
Both coordinate maps and coefficient-copy maps compose, proving the
functor laws.  For a disjoint union the incidence scheme is the product
of the two incidence schemes.  K\"unneth for global sections over
\(\C\) gives the tensor-product formula, compatibly with its
associativity, symmetry and unit.

There is also a direct Koszul description.  Pull back the edge
generators of \(G\), and send the generator for \(e\) to
\(\epsilon_\phi(e)\) times the generator for \(\phi(e)\).
The compatibility of sections makes this a map of Koszul algebras.
Derived global sections give
\[
 \mathscr U_G\otimes^{\mathbf L}_{S_G}S_H
 \longrightarrow\mathscr U_H.
\]
After derived tensoring with \(\C\) over \(S_H\), this is the
natural map between the two Tor algebras.  Specializing both coefficient
spaces to the origin gives exactly the map on
\eqref{alg:zero-base-change}; hence \eqref{alg:tor-isomorphism} is
natural and preserves products.  This argument does not require the
coefficient-ring map to be flat.
\end{proof}

\subsection{The cycle-compatibility algebra}
\label{supp:cycle-product}

\begin{proof}[Proof of \cref{alg:cycle-product}]
The left-hand side is \(\Tor_0^{S_G}(\mathscr U_G,\C)\).
A contributing state in homological degree zero satisfies
\(|S|=v(S)\).  Since every active vertex has degree at least two,
all these degrees must equal two.  Such a state is exactly a collection
of pairwise vertex-disjoint simple cycles, and its cohomology is
one-dimensional.  Choose a nonzero class \(z_C\) for each single
cycle.

If two cycles share an edge, their exterior product vanishes.  If they
share a vertex but no edge, their cup product at that \(\Pp^1\)
factor lies in \(H^2\) and again vanishes.  For vertex-disjoint
cycles the product is nonzero by K\"unneth and spans the state of
their union.  These products account for every state in degree zero,
and different collections give distinct state summands.  Thus the
displayed monomial relations generate the entire kernel.  All classes
in this proposition have total cohomological degree zero, so their
product is ordinarily commutative.
\end{proof}

\subsection{Module generators and real sectors}
\label{supp:generator-bridge}

\begin{proof}[Proof of \cref{alg:generator-real-bridge}]
By \cref{alg:cycle-product}, the generators are indexed by supports
whose vertex degrees are zero or two.  They are a subset of the binary
cycle space, consisting of all supports with even vertex degrees;
that space has \(2^{m-n+c}=2^\nu\) elements.  Equality therefore
means that every even support has maximum degree at most two.
The union of two edge-disjoint cycles meeting at a vertex violates
this condition.  Conversely, an even support of degree at least four
at some vertex decomposes into edge-disjoint simple cycles, at least
two of which meet there.  If the graph has maximum degree at most
three, every even support automatically has degrees zero or two.
\end{proof}

\subsection{Three invariants of one resolution}
\label{supp:three-invariants}

\begin{proof}[Proof of \cref{alg:hilbert}]
The recurrence for \(f_d\) gives
\(f_d(2\mathrm i)=\mathrm i^d(1-d)\) for all \(d\geq0\).
For a contributing state \(S\), therefore,
\[
 \prod_v f_{d_S(v)}(2\mathrm i)
 =(-1)^{|S|+v(S)}\wt(S)
 =(-1)^{|S|-v(S)}\wt(S).
\]
The same equality holds with both sides zero for a state with a
degree-one vertex.  Substituting \eqref{alg:betti-state-sum} proves
\eqref{alg:theta-numerator}.  The alternating Hilbert series of the
free resolution then proves the first equality in
\eqref{alg:universal-hilbert}.  The second follows from
\eqref{alg:omega-definition}.

By \cref{alg:normal-cm}, \(\mathscr U_G\) is Cohen--Macaulay of
dimension \(d_G\).  Its finiteness over \(S_G\) implies that
\(d_G\) general coefficient linear forms are a homogeneous system of
parameters.  Indeed, successive general hyperplanes cut the support in
coefficient space to the origin.  On a Cohen--Macaulay algebra a system
of parameters is regular \cite[Tag~00N7]{stacks2026cohomology}.
Each linear nonzerodivisor multiplies the Hilbert series of the quotient
by \(1-t\).  Applying this \(d_G\) times gives
\eqref{alg:artin-hilbert}.
\end{proof}

\subsection{One-dimensional sections}
\label{supp:pencil}

\begin{remark}[One-dimensional sections]
\label{alg:pencil-sections}
If \(d_G>0\), cutting first by \(d_G-1\) general coefficient linear forms gives
a one-dimensional Cohen--Macaulay algebra \(M_G\) over
\(R\cong\C[\lambda_1,\ldots,\lambda_{\rho+1}]\), with
\(\Hilb_{M_G}(t)=\romega_G(t)/(1-t)\).
The sequence is regular, so tensoring the resolution with \(R\)
preserves exactness and minimality.  Thus its Betti table is unchanged.
One further regular linear cut gives \(A_G\) over \(R/(\ell)\),
again with the same Betti table and total Betti number \(N(G)\).
The isomorphism classes of these sections can depend on their choices;
their Hilbert series and Betti tables do not.
\end{remark}

\subsection{Sharp bounds at fixed cycle rank}
\label{supp:degree-bounds}

\begin{proposition}[Sharp bounds at fixed cycle rank]
\label{alg:degree-bounds}
For a connected loopless multigraph with at least one vertex,
\[
 (\nu-1)2^\nu+2\ \leq\ N(G)=Q(G)\ \leq
 (4+2\sqrt2)^{\nu-1}+(4-2\sqrt2)^{\nu-1}.
\]
Both bounds are attained by simple graphs for every \(\nu\geq0\).
Equality in the upper bound holds exactly when the 2-core has maximum
degree at most three.
\end{proposition}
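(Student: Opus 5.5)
We would first reduce to a normal form of the 2-core and then compare graphs through one local move, vertex splitting. By \cref{cor:elementary-degree}, deleting leaves and subdividing or unsubdividing edges do not change \(Q\), and they do not change \(\nu\). So \(Q(G)=Q(\mathrm{core}(G))\), where the 2-core is either empty (\(\nu=0\), \(Q=1\)), a cycle (\(\nu=1\), \(Q=2\)), or a graph whose branch vertices of degree at least three are joined by subdivided paths. All graphs below are kept loopless by subdividing as needed.

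\emph{Splitting inequality.} Let \(v\) have degree \(d\ge4\). Replace it by two vertices \(v',v''\) joined by a new edge \(f\), distributing the old incident edges so that each new vertex has degree at least three. This keeps the graph connected and preserves \(\nu\). Each state \(S\) of the old graph lifts to the two states \(S\) and \(S\cup\{f\}\) of the new one, and all other weights agree. If \(S\) has degrees \(k_1,k_2\) on the two sides, the local comparison is
\[
 \varphi(k_1)\varphi(k_2)+\varphi(k_1+1)\varphi(k_2+1)\ \text{versus}\ \varphi(k_1+k_2).
\]
If \(k_2=0\), both sides equal \(\varphi(k_1)\). If \(k_1,k_2\ge1\), the difference is
\[
 (k_1-1)(k_2-1)+k_1k_2-(k_1+k_2-1)=2(k_1-1)(k_2-1)\ge0,
\]
and it is strictly positive when \(k_1,k_2\ge2\). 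Some state realizes that case: take two edge-disjoint cycles through \(v\), one leaving on each side, which exist because both new vertices have degree at least three in a 2-core. Hence splitting a vertex of degree at least four strictly increases \(Q\) and never decreases it. For simple graphs the statement needs no further care, because subdivisions restore simplicity without changing \(Q\).

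\emph{Extremal graphs.} Reversing the splits, every 2-core with cycle rank \(\nu\ge1\) is obtained by successive splittings, up to subdivision, from a flower: \(\nu\) triangles sharing one vertex. The flower is simple and is obtained by identifying all branch vertices into one. Its contributing states are unions of \(k\) of the triangles, with weight \(2k-1\) at the centre for \(k\ge1\). Therefore
\[
 Q=1+\sum_{k\ge1}\binom{\nu}{k}(2k-1)=(\nu-1)2^\nu+2,
\]
which gives the lower bound and its attainment. Conversely, splitting repeatedly until all branch vertices are cubic only increases \(Q\). The increase is strict at every step, so a 2-core has the maximal value exactly when it already has maximum degree at most three.

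\emph{The cubic value.} It remains to show that every connected graph whose 2-core is cubic has
\[
 Q=c_\nu:=(4+2\sqrt2)^{\nu-1}+(4-2\sqrt2)^{\nu-1},
\]
which satisfies \(c_{\nu+1}=8c_\nu-8c_{\nu-1}\), with \(c_1=2\) and \(c_2=8\) (the theta graph). This is the step I expect to be the main obstacle, since the splitting inequality only bounds \(Q\) and does not show that \(Q\) is independent of the cubic kernel. I would first try a transfer-matrix argument. Add an ear to a cubic core by joining subdivision points \(x,y\) on two edges by a new path. This creates two new degree-three vertices and raises \(\nu\) by one. Condition on the states restricted to the local pieces at \(x\) and \(y\). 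The new weight \(\varphi(3)=2\) appears only when all three edges at a point are selected. This should express \(Q(G+\text{ear})\) through \(Q(G)\) and through the state sums with prescribed behaviour on the two subdivided edges. Applying the same recursion to those auxiliary sums should close up into the linear recurrence with characteristic polynomial \(x^2-8x+8\).

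If the bookkeeping of the auxiliary sums does not close, the fallback is to prove invariance under Whitehead moves, the \(I\!\leftrightarrow\!H\) flips on a cubic graph. Both sides of a flip arise by splitting the same degree-four vertex in two ways, so the local sums from the splitting computation can be compared directly. Connected cubic multigraphs of fixed \(\nu\) are connected by such flips, so invariance under flips gives independence of the kernel. The base cases \(\nu=1,2\) then fix the value \(c_\nu\).
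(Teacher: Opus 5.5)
You have the lower bound right: your splitting identity is the paper's contraction identity read backwards, and contracting a spanning tree down to the flower is the loopless form of the paper's bouquet of $\nu$ loops. You also correctly reduce the upper bound and the equality case to one claim: $Q(G)=c_\nu$ whenever the 2-core of $G$ is subcubic. (For strictness of a split, use the full 2-core state as witness. Two edge-disjoint cycles through $v$ need not exist, e.g.\ when one edge at $v$ is a bridge of the core.)

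But that claim carries the entire upper bound, and you leave it unproved. The ear recursion is only expected to close. Moreover, ears added to a cycle never produce a core with a bridge, such as two triangles joined by a path, which has $Q=8=c_2$. The Whitehead fallback needs a flip-connectivity theorem that you neither prove nor cite. Even granted, it only shows that $Q$ is constant on subcubic cores of fixed $\nu$. The base cases $\nu=1,2$ cannot fix $c_\nu$ for larger $\nu$ without a recurrence in $\nu$, and flips do not supply one.

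The missing idea is to dominate $\varphi$ by weights that are exactly contraction invariant. The paper uses $f_d(2)$, the sequence $1,0,1,2,5,12,\dots$ with $f_{d+1}=2f_d+f_{d-1}$. Two facts drive the argument:
\begin{itemize}
\item $\varphi(d)\le f_d(2)$, with equality exactly for $d\le3$;
\item $f_af_b+f_{a+1}f_{b+1}=f_{a+b}$ for all $a,b\ge0$.
\end{itemize}
By the second, $\theta_G(1,2)=\sum_S\prod_v f_{d_S(v)}(2)$ is unchanged by contracting a nonloop edge (deletion--contraction at $t=1$). Contracting a spanning tree then gives $\theta_G(1,2)=\sum_k\binom{\nu}{k}f_{2k}(2)=c_\nu$. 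Hence $Q(G)\le c_\nu$ at once. Equality holds exactly when no contributing state has a vertex of degree at least four, which by the full-core state means the 2-core is subcubic.

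Your own computation almost contains this. Splitting a degree-four vertex two-plus-two gives local weights $1,0,1,2,5$ for $k=0,\dots,4$, independent of the partition. These are $f_k(2)$, which is why flips preserve $Q$. Iterating shows that every cubic resolution of a degree-$d$ vertex has weight $f_d(2)$. That gives both independence of the kernel and the value $c_\nu$, with no theorem on flip connectivity needed.
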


\begin{proof}
For the lower bound only, extend the combinatorial sum \(Q\) to
graphs with loops, counting a loop twice in the degree.  For a nonloop
edge \(uv\), pair the states omitting and containing this edge.
Writing \(a,b\) for the other degrees at its endpoints, the identity
\[
 \varphi(a)\varphi(b)+ab-\varphi(a+b)
 =2\mathbf1_{\{a,b\geq2\}}\varphi(a)\varphi(b)\geq0
\]
gives \(Q(G)\geq Q(G/uv)\).  Contract a spanning tree.  The resulting
bouquet of \(\nu\) loops has sum
\(\sum_{k=0}^\nu\binom{\nu}{k}\varphi(2k)=(\nu-1)2^\nu+2\).

For the upper bound, \(\varphi(d)\leq f_d(2)\), with equality
exactly for \(0\leq d\leq3\).  Thus \(Q(G)\leq\theta_G(1,2)\).
For a connected graph the value \(\theta_G(1,\gamma)\) depends only
on \(\nu\).  Indeed, with \(\theta\) extended to graphs with loops as
in \cite{watanabe2011graphpolynomials}, a loop again counting twice
in the degree, the deletion--contraction formula
\(\theta_G(t,\gamma)=(1-t)\theta_{G-e}(t,\gamma)+t\theta_{G/e}(t,\gamma)\)
for a nonloop edge \(e\)
\cite[Theorem~2.3]{watanabe2011graphpolynomials} reduces at
\(t=1\) to \(\theta_G(1,\gamma)=\theta_{G/e}(1,\gamma)\).
Contracting a spanning tree leaves the bouquet of \(\nu\) loops, so
\begin{equation}
 \theta_G(1,\gamma)
 =\sum_{k=0}^{\nu}\binom{\nu}{k}f_{2k}(\gamma)
 =(\xi+\xi^{-1})^{\nu-1}\bigl(\xi^{\nu-1}+\xi^{1-\nu}\bigr),
 \qquad \gamma=\xi-\xi^{-1},
 \label{alg:theta-special-value}
\end{equation}
which is \cite[Lemma~4.2, equation~(4.1)]{watanabe2011graphpolynomials}.
The second equality follows from the closed form
\(f_d(\gamma)=(\xi^{d}+(-1)^d\xi^{2-d})/(1+\xi^2)\) of the recurrence
and the binomial theorem.  For \(\gamma=2\) take \(\xi=1+\sqrt2\);
then \((\xi+\xi^{-1})\xi^{\pm1}=4\pm2\sqrt2\), which gives the
displayed upper bound.
Both sums ignore pendant edges.
On the 2-core, every term agrees if its maximum degree is at most
three; otherwise the full-core state gives a strict inequality.
The lower bound is attained by \(\nu\) triangles sharing one vertex,
and the upper by a ladder with \(\nu\) squares; for \(\nu=0\),
any tree attains both bounds.
\end{proof}

\subsection{The objects being compared}
\label{supp:scope}

\begin{remark}[The objects being compared]
\label{alg:scope}
The universal edge-incidence ring records common zeros of independently
varying edge bilinear forms.  Its Artin length \(\romega_G(1)\) and its total
Betti number \(Q(G)\) are different invariants, as the values 30 and
48 above illustrate.  The equality of the latter with \(N(G)\)
comes from the BP counting theorem, not from an identification of the
Artin section with a BP root algebra.  Likewise, the naturality proved
here concerns noncontracting graph maps; a general edge contraction is
outside this category.  In the next section, the same state factors
will be identified with local multiplicities of the actual BP family
at a boundary.  That comparison concerns the state factors and their
local bases, and does not identify its deformation differential with
the differential of an edge Koszul complex.
\end{remark}

\subsection{The Gr\"obner-basis check for \(K_4\)}
\label{supp:k4-groebner}

The computation cited in \cref{alg:k4-section} is the following
Singular (version 4.4.1) script, run on 2026-10-12.  The unknowns
\(y_0,\dots,y_{11}\) are the twelve directed messages of \(K_4\),
\(X_k\) are the cavity products, the parameters \(a_e\) and
\(b_i\) are the fixed residues modulo the prime \(32003\) listed in
the script, and the ideal is saturated with respect to every
message and every update denominator before the standard basis is
computed.
{\small\begin{verbatim}
LIB "elim.lib";
ring r = 32003,(y0,y1,y2,y3,y4,y5,y6,y7,y8,y9,y10,y11),dp;
poly X0 = 8360*y3*y5;
poly X1 = 3865*y7*y9;
poly X2 = 8360*y1*y5;
poly X3 = 16236*y6*y11;
poly X4 = 8360*y1*y3;
poly X5 = 24937*y8*y10;
poly X6 = 3865*y0*y9;
poly X7 = 16236*y2*y11;
poly X8 = 3865*y0*y7;
poly X9 = 24937*y4*y10;
poly X10 = 16236*y2*y6;
poly X11 = 24937*y4*y8;
ideal I = y0*(X0+4404)-(4404*X0+1), y1*(X1+4404)-(4404*X1+1),
  y2*(X2+18653)-(18653*X2+1), y3*(X3+18653)-(18653*X3+1),
  y4*(X4+27770)-(27770*X4+1), y5*(X5+27770)-(27770*X5+1),
  y6*(X6+26290)-(26290*X6+1), y7*(X7+26290)-(26290*X7+1),
  y8*(X8+25029)-(25029*X8+1), y9*(X9+25029)-(25029*X9+1),
  y10*(X10+2069)-(2069*X10+1), y11*(X11+2069)-(2069*X11+1);
list L = y0, X0+4404, y1, X1+4404, y2, X2+18653, y3, X3+18653,
  y4, X4+27770, y5, X5+27770, y6, X6+26290, y7, X7+26290, y8,
  X8+25029, y9, X9+25029, y10, X10+2069, y11, X11+2069,
  1+8360*y1*y3*y5, 1+3865*y0*y7*y9, 1+16236*y2*y6*y11,
  1+24937*y4*y8*y10;
int k; for (k=1; k<=size(L); k++) { def S = sat(I, L[k]);
  if (typeof(S)=="list") { I = S[1]; } else { I = S; } kill S; }
I = std(I);
"graph K4 mode modp" ;
"p=32003 a=4404,18653,27770,26290,25029,2069 b=8360,3865,16236,24937";
"dim:"; dim(I);
"vdim:"; vdim(I);
ideal E = y0*(X0+4404)-(4404*X0+1), y1*(X1+4404)-(4404*X1+1),
  y2*(X2+18653)-(18653*X2+1), y3*(X3+18653)-(18653*X3+1),
  y4*(X4+27770)-(27770*X4+1), y5*(X5+27770)-(27770*X5+1),
  y6*(X6+26290)-(26290*X6+1), y7*(X7+26290)-(26290*X7+1),
  y8*(X8+25029)-(25029*X8+1), y9*(X9+25029)-(25029*X9+1),
  y10*(X10+2069)-(2069*X10+1), y11*(X11+2069)-(2069*X11+1);
ideal J = I, det(jacob(E));
"solutions with singular Jacobian (vdim of I + det Jac;
  0 means all simple):"; vdim(std(J));
quit;
\end{verbatim}}
The saved output is
{\small\begin{verbatim}
graph K4 mode modp
p=32003 a=4404,18653,27770,26290,25029,2069 b=8360,3865,16236,24937
dim:
0
vdim:
48
solutions with singular Jacobian (vdim of I + det Jac; 0 means all simple):
\end{verbatim}}
so the saturated ideal is zero-dimensional of length \(48\) over
\(\F_{32003}\).  The last query, the length of the ideal with the
Jacobian determinant adjoined, was interrupted and produced no output;
it is not used.  This is a finite-characteristic specialization check
with multiplicity, and the proof of \(N=Q\) does not depend on it.

\section{Supplementary proofs for the strong-field boundary}
\label{supp:strong-field}

\subsection{The algebra at a state}
\label{supp:boundary-algebra}

\begin{proof}[Proof of \cref{sf:boundary-algebra}]
The chart is defined over the parameter space by the regular sequence
$(-1)^{k_i}w_i^{r_i}-p_i$.
Its monomials $\prod_i w_i^{a_i}$ with $0\le a_i<r_i$ form a free
basis over the convergent parameter ring $\C\{p_i:i\in V\}$.
Setting $p=0$ gives \eqref{sf:local-algebra} and the asserted length.
The $i$th power equation gives the claimed ramification index.
\end{proof}

\subsection{Duality and coordinate choices}
\label{supp:duality}

The statewise comparison of \cref{sf:boundary-algebra} with the
cohomology table can be phrased through Serre duality; the following
remark records what this does and does not provide.

\begin{remark}[The role of duality and coordinate choices]
\label{sf:serre-comparison}
For an active vertex, $k\ge2$, Serre duality
\cite[Tag 0BS4]{stacks2026cohomology} gives
\[
 H^1(\Pp^1,\cO(-k))
 \cong H^0(\Pp^1,\cO(k)\otimes\omega_{\Pp^1})^*.
\]
After identifying $\omega_{\Pp^1}$ with $\cO(-2)$ and choosing an
affine coordinate and a trivialization, the space on the right is the
dual of polynomials of degree at most $r-1$, where $r=k-1$.
This is the dual of $B=\C[w]/(w^r)$ as a vector space. The perfect
pairing $(f,g)\mapsto[w^{r-1}](fg)$ then identifies $B$ with $B^*$
and satisfies $(w^aB)^\perp=w^{r-a}B$ for $0\le a\le r$.
It expresses the comparison through dual filtrations, but the scalar
pairing uses the chosen coordinate. Duality alone therefore does not
give a canonical algebra identification with the Tor summand.
\end{remark}

\subsection{The eight real states of \(K_4\)}
\label{supp:k4-states}

\begin{example}[The eight real states of $K_4$]
\label{sf:k4-states}
The state types in \cref{fig:k4-states} give $1+4+3+6\cdot4+16=48$
complex branches. The Eulerian states are $\varnothing$, the four
triangles, and the three four-cycles; every active vertex in them has
degree two, so every $r_i=1$ and each boundary algebra has length one.
For positive $a_e$ and small positive $p_i$ each gives one real branch,
positive only for $S=\varnothing$ and otherwise with negative messages
in both directions on the selected edges. The six $K_4-e$ states have
two equations $w_i^2=-p_i$ and the full state four, so they have no real
branches in this chamber. Thus $(N_{\C},R_{K_4},N_+)=(48,8,1)$, and
$8=1+4+3=\sum_q\beta_{0,q}^{S_{K_4}}(\U_{K_4})$ counts all real sectors.
\Cref{real:strong-field-count} gives the same count in every field
orthant; at general fields each sector remains nonempty, while
uniqueness within a sector is asserted only at strong field.
\end{example}

\section{Supplementary proofs for the real sectors}
\label{supp:real}

\subsection{The two real edge branches}
\label{supp:edge-branches}

\begin{proof}[Proof of \cref{real:edge-branches}]
Write $a=a_e$, $s=a^2$, and $A_i=X_{i,e}/a$, $A_j=X_{j,e}/a$.
The edge equations are
\begin{equation}
 F_i=A_i\frac{1+sA_j}{1+A_j},\qquad
 F_j=A_j\frac{1+sA_i}{1+A_i}.
 \label{real:edge-cavities}
\end{equation}
Eliminating $A_j$ gives
\begin{equation}
 sA_i^2+C_iA_i-F_i=0,\quad
 C_i=\frac{1+sF_j-F_i(s+F_j)}{1+F_j},\quad
 A_j=F_j\frac{1+A_i}{1+sA_i}.
 \label{real:edge-quadratic}
\end{equation}
The discriminant is $C_i^2+4sF_i>0$, and the product of the roots
is $-F_i/s<0$.  Hence one root is positive and the other negative.
For the polynomial $P_i$ on the left,
\[
 P_i(-1)=\frac{(s-1)(1+F_i)}{1+F_j},\qquad
 P_i(-1/s)=\frac{(1-s)F_j(1+F_i)}{s(1+F_j)}.
\]
These values have opposite signs.  The negative root is strictly
between $-1$ and $-1/s$, so neither excluded denominator vanishes,
and the corresponding $A_j$ is negative.
The positive root gives $A_j>0$.
Conversely, any admissible solution of \eqref{real:edge-cavities}
must appear in \eqref{real:edge-quadratic}.  Neither reconstructed
$A_j$ is $-1$ or $-1/s$: substituting either value into the cleared
identity $F_i(1+A_j)=A_i(1+sA_j)$ contradicts $s\ne1$,
$A_i\ne0$, and $F_i>0$.
Thus both reconstructed pairs solve the original rational
equations.

The discriminant and every elimination denominator are nonzero,
so the two branches are real analytic.  Solving the second
equation for $A_j$ and then the first by the simple quadratic
shows that the edge Jacobian is invertible.
Finally, $y_{j\to i,e}^{\pm}=F_i/(aA_i^{\pm})$; the product of the two
quadratic roots gives \eqref{real:edge-product}.
\end{proof}

\subsection{Vertex coordinates on a sector}
\label{supp:vertex-map}

\begin{proof}[Proof of \cref{real:vertex-map}]
Given $u$, prescribe $F_i=e^{2u_i}$ and select the positive edge
branch off $S$ and the negative branch on $S$.
The incoming signs multiply to one at every vertex.  By
\eqref{real:log-branches},
\[
 \sum_{e\ni i}\tfrac12\log|y_{j\to i,e}|
 =k_i u_i+\sum_{e\ni i}\varepsilon_e z_{e,i}^{+}(u).
\]
Hence the consistency equation $F_i=b_i\prod_{j\to i,e}y_{j\to i,e}$
is equivalent to \eqref{real:vertex-equation}.
When it holds, the cavity ratios reconstructed from the edge
branches agree with the original cavity products, and all BP
equations hold.  The edge denominators and messages are nonzero
by Lemma~\ref{real:edge-branches}, and $1+F_i>1$, so these roots
are admissible.

Conversely, Proposition~\ref{real:sign-sectors} assigns to any
admissible real root positive $F_i$ and an Eulerian support $S$.
On every edge Lemma~\ref{real:edge-branches} forces the selected
branch; the root therefore recovers \eqref{real:vertex-equation}.
For the local assertion, adjoin the consistency equations for
$F_i$ to the BP equations and eliminate the edge messages using
the invertible edge Jacobians.  This leaves
\eqref{real:vertex-equation} after invertible analytic coordinate
changes.  The local solution spaces and their Jacobian
nondegeneracy are consequently preserved.
Finally, \eqref{real:global-box} follows from
\eqref{real:field-map} and \eqref{real:edge-bound}.
\end{proof}

\subsection{A degree budget for positive solutions}
\label{supp:positive-bound}

\begin{proof}[Proof of \cref{real:positive-bound}]
There are $Q(G)$ distinct complex roots by
Theorem~\ref{thm:degree}.  Each of the $2^{\nu(G)}-1$ nonempty
Eulerian sectors contains at least one real root, all of them
nonpositive, by Proposition~\ref{real:sector-degree}.
Subtracting these roots from the complex total gives the upper
bound for $N_+$.  The empty sector supplies at least one positive
root, and the remaining inequalities follow at once.
Complex conjugation pairs the nonreal roots of this real, reduced
fiber, proving the first congruence in \eqref{real:parity}.  For the
second, $\wt(S)=\prod_v\varphi(d_S(v))$ is odd exactly when every
$d_S(v)$ is even, that is, when $S$ is Eulerian, and there are
$2^{\nu(G)}$ Eulerian subsets.
\end{proof}

\subsection{Zero couplings}
\label{supp:zero-couplings}

The nonzero-coupling hypothesis is essential for the lower bound
with the cycle rank of $G$.  If $J_e=0$, the update on that edge
is $y_{i\to j,e}=y_{j\to i,e}=1$ on the admissible real set.
Deleting all such edges gives a bijection of admissible real
solution sets with the resulting support graph: in the reverse
construction the new denominators are $F_i+1>0$ by
Proposition~\ref{real:sign-sectors}.
The lower bound and the minimum formula then use the cycle rank
of that support graph.

\section{Further comparison with related work}
\label{supp:related}

This section expands the comparisons of \cref{intro:related}.

\paragraph{Variational characterizations}
Yedidia, Freeman, and Weiss identify BP fixed points with interior
stationary points of the constrained Bethe free energy
\cite{yedidia2005bethe}, and Watanabe classifies the signed graphs on
which the positive fixed point is unique for all interaction magnitudes
and fields, using an index sum over nondegenerate stationary points
\cite{watanabe2011signed}. Both concern positive messages, whereas the
sign sectors of \cref{sec:real} live in the rational BP equations, where
negative messages are admissible and are counted by the Eulerian edge
sets.

\paragraph{Enumeration by polynomial systems}
Knoll, Mehta, Chen, and Pernkopf encode normalized messages and
normalization factors by polynomial equations and use numerical
polynomial homotopy continuation to enumerate isolated solutions
\cite{knoll2018bp}. Their construction uses the sparsity of these
equations through a Bernstein--Kushnirenko--Khovanskii bound, and
positivity selects the probabilistic fixed points. Here the enumerative
question is the generic degree of the admissible ratio equations within
a prescribed pairwise parameter family, with independent edge and field
parameters and a fixed factorization. Genericity in this family is
distinct from genericity among all coefficients with a prescribed
monomial support, and the two counts are not compared numerically in
this paper. Three differences are structural. Messages normalized to
sum one and the ratio coordinates used here are related by the
invertible change \(y\mapsto(y/(1+y),1/(1+y))\) on the common open set
where no message equals \(-1\). A system written in message variables
alone, without saturating the vertex normalizations \(1+F_i\), retains
the non-normalizable locus \(F_i\equiv-1\), which for a connected graph
with \(\nu(G)\ge2\) is positive-dimensional for generic parameters
(\cref{supp:nonnormalizable-dim}) and already contributes two of the
four solutions in \cref{ex:triangle-admissible}. Finally, a
Bernstein--Kushnirenko--Khovanskii bound counts isolated solutions of a
system with generic coefficients of the given support, whereas \(N(G)\)
is the generic degree within the family of Ising-type parameters with
the admissibility conditions imposed. Consequently a difference between
the two numbers is not a count of extra isolated solutions, and this
paper does not assert any statement about the computations of
\cite{knoll2018bp}.

\paragraph{Loop calculus and graph polynomials}
Chertkov and Chernyak express the partition function as a finite
correction series around a BP solution, with contributions supported on
subgraphs having no vertices of degree one
\cite{chertkov2006loopcalculusv2}. These weights depend on the chosen
BP solution and serve a different enumerative purpose from a count of
solutions. Watanabe and Fukumizu established the polynomiality of
\(\omega_G\), its combinatorial expansions, and the nonnegativity of
its coefficients for graphs without tree components
\cite{watanabe2011graphpolynomials}. The total Betti number of the
resolution in \cref{sec:incidence} equals the generic BP degree, whereas
the length \(\romega_G(1)\) of the Artin reduction is a separate
invariant.

\paragraph{Geometric syzygies and resultants}
Bender, Faug\`ere, Mantzaflaris, and Tsigaridas give explicit Weyman
complexes and determinantal formulas for mixed multilinear systems
\cite{bender2021mixedmultilinearv1}. Their resultant construction treats
a multihomogeneous system with one more equation than the dimension of
the base, and their bipartite bilinear systems provide a further
relevant class of supports. For the edge-incidence family the explicit
minimal resolution, its multiplicative structure, and the normal
Cohen--Macaulay ring are established directly, without a resultant
hypothesis on the number of equations. The universal edge-incidence ring
and the coordinate algebra of a BP fiber have different roles in this
construction.

\paragraph{Homological and topological approaches to message passing}
Peltre builds a complex of local observables and formulates belief
propagation as a transport equation with homological conservation laws
\cite{peltre2019homologicalv1}. Sergeant-Perthuis and Boitel show how
deformation retractions of factor-graph posets, together with explicitly
transformed Hamiltonians, preserve Bethe critical points, with
corresponding results for minima
\cite{sergeantperthuis2025retractionsv1}. These constructions describe
message dynamics and positive variational problems. The complex used
here is the sheaf-theoretic Koszul complex on a product of projective
lines, whose cohomology controls a resolution over the universal
coefficient ring; the connection to the actual BP family is provided by
the common state weights \(\wt(S)\) and by the local power-map
description at strong field. No identification of these different
complexes, and no general invariance of \(N(G)\) under graph homotopy,
is used.